\documentclass[11pt,a4paper,reqno]{amsart}

\usepackage[T1]{fontenc}
\usepackage[utf8]{inputenc}
\usepackage{lmodern}
\usepackage{microtype}
\usepackage[
  a4paper,
  textwidth=15.2cm,
  textheight=23.2cm,
  centering,
  headheight=14pt
]{geometry}
\usepackage{mathrsfs}
\usepackage{amsmath,amssymb,amsthm}
\usepackage{enumerate}

\usepackage[
  colorlinks=true,
  linkcolor=blue,
  citecolor=magenta,
  urlcolor=blue
]{hyperref}

\def\XXint#1#2#3{{\setbox0=\hbox{$#1{#2#3}{\int}$ }
\vcenter{\hbox{$#2#3$ }}\kern-.6\wd0}}

\newtheorem{theorem}{\bf Theorem}[section]
\newtheorem{proposition}[theorem]{\bf Proposition}
\newtheorem{lemma}[theorem]{\bf Lemma}
\newtheorem{corollary}[theorem]{\bf Corollary}

\theoremstyle{definition}
\newtheorem{definition}[theorem]{Definition}

\newtheorem{remark}[theorem]{Remark}
\newtheorem{hypothesis}[theorem]{Hypothesis}

\numberwithin{equation}{section}

\begin{document}

\title[Positive solutions: sublinear equations]{Positive Solutions for 
Sublinear Equations with Compact Positivity-Improving Resolvent}

\author[T. Klimsiak]{Tomasz Klimsiak}

\thanks{This work was supported by the Polish National Science Centre
(Grant No.~2022/45/B/ST1/01095).}

\address{
Faculty of Mathematics and Computer Science,
Nicolaus Copernicus University in Toru\'n,
Chopina 12/18,
87-100 Toru\'n,
Poland
}

\email{tomas@mat.umk.pl}

\subjclass[2020]{
Primary 35J61;
Secondary 47J10, 35P30, 47A75}

\keywords{positive solutions; semilinear equations; sublinear nonlinearities; principal 
eigenvalue; compact positivity-improving resolvent}

\begin{abstract}
We establish a Br\'ezis--Oswald-type spectral principle for semilinear
equations
\[
-Lu=f(x,u),
\]
where $L$ is the generator of a positive $C_0$-semigroup on $L^p$ with
compact positivity-improving resolvent. Neither symmetry, variational
structure, nor regularizing properties such as ultracontractivity or
smoothing are assumed. Let $a_0$ and $a_\infty$ denote the asymptotic
slopes of the nonlinearity at zero and at infinity, and let
$\lambda_1(a)$ denote the generalized principal eigenvalue associated
with the perturbed operator $-L-a$. We prove that
\[
\lambda_1(a_0)<0<\lambda_1(a_\infty)
\]
implies that the equation admits a strictly positive solution.

The proof develops a potential-theoretic sub- and supersolution framework
based on the order induced by supermedian functions and combines a
Deny-type compactness theorem, a Kato-type inequality, and a Doob
transform. The construction also yields an order-preserving selection of
solutions and allows the lower control on the nonlinearity to be relaxed.
If $y\mapsto f(x,y)/y$ is strictly decreasing and $a_0$ is bounded, the
spectral condition is necessary as well, and the strictly positive
solution is unique.

\end{abstract}

\maketitle

\section{Introduction}
\label{sec1}

The existence and uniqueness of positive solutions are classical questions in the theory
of semilinear elliptic equations. A fundamental spectral criterion for
sublinear problems was established by Br\'ezis and Oswald
\cite{BrezisOswald1986}. For the Dirichlet problem
\[
-\Delta u=f(x,u)\quad\text{in }D,
\qquad
u=0\quad\text{on }\partial D,
\]
under suitable one-sided growth assumptions on $f$, they showed that
the existence of a positive solution is governed by spectral inequalities determined by the
behavior of the quotient $f(x,y)/y$ at zero and at infinity. If, in addition,
this quotient is strictly decreasing in $y>0$, the corresponding spectral condition
becomes necessary and the positive solution is unique.

The purpose of the present paper is to identify an operator-theoretic
mechanism behind the Br\'ezis--Oswald spectral principle. 
We show that the existence part of this principle persists within the abstract framework of positive $C_0$-semigroups
on $L^p$ with compact positivity-improving resolvent. In particular,
the argument requires neither symmetry nor a variational structure and
does not rely on smoothing or ultracontractive properties of the semigroup,
on any special differential structure of the operator, or on geometric
properties of the domain.

More precisely, let $(E,\mathcal A)$ be a Lusin measurable space endowed
with a finite measure $m$, and let $L$ generate a strongly continuous
positive semigroup $(T_t)$ on $L^p(E;m)$, $1\le p<\infty$, whose resolvent is
compact and positivity improving. In view of the invariance of our assumptions under spectral shifts,
we may and do assume that $s(L)<0$. We study strictly positive
solutions of
\begin{equation}\label{eq1.1}
    -Lu=f(x,u)\quad\text{in }E,
\end{equation}
for Carath\'eodory nonlinearities satisfying a one-sided linear upper
growth condition and a local lower control near zero. The precise
hypotheses are stated as \textup{(F1)--(F3)} in
Section~\ref{sec.prelim}.

\subsection{Main results}
The spectral quantities governing the existence problem are determined by
the asymptotic slopes
\[
a_0(x)
:=
\liminf_{y\to0^+}\frac{f(x,y)}{y},
\qquad
a_\infty(x)
:=
\limsup_{y\to\infty}\frac{f(x,y)}{y}.
\]
Neither $a_0$ nor $a_\infty$ is assumed to be bounded. Accordingly, the
corresponding generalized principal eigenvalues are defined by monotone
truncation:
\[
\lambda_1(a_0)
:=
\inf_{k\geq1}
\lambda_1\bigl(-L-(a_0\wedge k)\bigr),
\qquad 
\lambda_1(a_\infty)
:=
\sup_{k\geq1}
\lambda_1\bigl(-L-(a_\infty\vee(-k))\bigr).
\]
The notion of solution, which requires a slight extension of the usual
resolvent formulation, is specified below.

\medskip
\noindent
\textbf{Main theorem.}
Under the above assumptions, if
\begin{equation}\label{eq:spectral}
    \lambda_1(a_0)<0<\lambda_1(a_\infty),
\end{equation}
then \eqref{eq1.1} admits a strictly positive solution.
If, in addition, $y\mapsto f(x,y)/y$ is  non-increasing for
$m$-a.e. $x\in E$ and decreasing on a set of positive measure,  and $a_0$ is bounded, then \eqref{eq:spectral}
is also necessary for existence and the strictly positive solution is
unique.

A distinctive feature of the present setting is that solutions need not
be bounded and, consequently, $f(\cdot,u)$ need not belong to
$L^p(E;m)$. Indeed, while the upper growth condition ensures that
$f^+(\cdot,u)\in L^p(E;m)$ for $u\in L^p(E;m)$, the negative part
$f^-(\cdot,u)$ need not belong to $L^p(E;m)$. Thus the equation
cannot in general be written in the standard resolvent form
\[
u=Rf(\cdot,u),
\qquad R=(-L)^{-1}.
\]
We therefore separate the positive and negative parts of $f$. A function
$u\in L^p(E;m)$ is called a solution of \eqref{eq1.1} if
$Rf^-(\cdot,u)\in L^p(E;m)$ and
\[
u+Rf^-(\cdot,u)=Rf^+(\cdot,u).
\]
By Rohlin's theorem, the positive resolvent $R$ admits a kernel
representation and hence extends naturally from $L^p(E;m)$ to
nonnegative measurable functions. Whenever
$f^-(\cdot,u)\in L^p(E;m)$, this notion agrees with the usual resolvent
formulation of the equation.

The method developed in the paper also yields an order-preserving selection principle. 
Within a fixed class of nonlinearities satisfying the assumptions of the existence theorem, a strictly positive solution
$u_f$ can be selected for each $f$ so that
\[
f_1\le f_2\quad\Longrightarrow\quad u_{f_1}\le u_{f_2}.
\]
The selection
principle is then used to weaken the local lower-control assumption in the
main existence theorem; the precise formulation is given in
Section~\ref{sec7}.

\subsection{Related literature}
The Br\'ezis--Oswald spectral criterion has subsequently been extended in
several directions. For nonsymmetric uniformly elliptic operators in
nondivergence form on possibly irregular bounded domains, we refer to Bao
\cite{Bao2003}. Quasilinear extensions include the $p$-Laplacian and more
general nonhomogeneous operators subject to different boundary conditions;
see, for instance,
\cite{DiazSaa1987,FragnelliMugnaiPapageorgiou2016}. Nonlocal analogues have
been obtained for the fractional $p$-Laplacian with Robin and Dirichlet
conditions
\cite{MugnaiPinamontiVecchi2020,IannizzottoMugnai2024}, as well as for mixed
local--nonlocal operators \cite{BiagiMugnaiVecchi2024}. These results
typically exploit structures specific to the operator under consideration,
such as a variational formulation, a Picone-type identity, a comparison
principle, boundary regularity, or detailed information on the corresponding
weighted eigenvalue problems.

Among abstract operator-theoretic approaches, the work of Arendt and Daners \cite{AD} 
is particularly close in spirit to the present paper. Working within the theory of positive 
semigroups and Banach lattices, they studied the abstract degenerate logistic equation
\begin{equation}
\label{eq.deglog1}
-Lu=\lambda u-b(x)g(x,u)u \quad\text{in }D,
\end{equation}
on possibly rough domains $D\subset\mathbb R^d$, where $b$ is a bounded, positive, nontrivial function on $D$, and
$g\in C^{0,1}(\overline D\times\mathbb R)$ is positive, strictly increasing in the second variable, and satisfies
\[
\lim_{y\to\infty}g(x,y)=\infty
\]
uniformly in $x$ on compact subsets of $D$. Their abstract framework makes it possible, in particular, 
to dispense with boundary regularity assumptions inherent in earlier treatments of problems of the form \eqref{eq.deglog1}.
The present setting differs from that of \cite{AD} in two essential respects. We allow a general one-sided 
sublinear nonlinearity, and we impose neither the local smoothing nor the ultracontractive 
properties of the semigroup that enter essentially into their analysis of the degenerate logistic equation.
Thus the issue is not merely to treat another particular class of
elliptic or nonlocal operators, but to determine whether the
Br\'ezis--Oswald spectral mechanism can be recovered from compactness
and positivity improvement of the resolvent alone, without
operator-specific identities or regularizing estimates. In particular, the existence argument applies without monotonicity of
$f(x,y)/y$, allows unbounded solutions and asymptotic slopes, and does not
derive compactness of the nonlinear problem from any regularizing estimate.

\subsection{Proof strategy}
To work at the level of the resolvent rather than the generator domain, 
we develop a sub- and supersolution
framework based on supermedian functions, that is, nonnegative functions $\gamma\in L^p(E;m)$ satisfying
\[
\alpha R_\alpha\gamma\leq\gamma,
\qquad \alpha>0.
\]
The construction is naturally formulated in terms of the order induced by the cone of supermedian functions: 
for $h,g\in L^p(E;m)$, we write $h\preceq g$ whenever $g-h$ is supermedian. 
This is in the spirit of order structures arising in abstract potential theory; see, for instance, \cite{BB}. 
In the usual abstract sub- and supersolution theory on Banach lattices, a subsolution to $-Lu=f(\cdot,u)$ 
is typically required to belong to $\mathscr D(L)$ and to satisfy $-Lw\leq f(\cdot,w)$. Here this domain-based 
condition is replaced, at the resolvent level, by
\[
w+Rf^-(\cdot,w)\preceq Rf^+(\cdot,w),
\]
with the analogous reversed relation for supersolutions. 
The main point is that this order is compatible with the lattice
operations needed in the nonlinear argument; in particular, the maximum of two positive
subsolutions is again a subsolution.
The resulting flexibility is essential in the construction of suitable barriers for a general one-sided sublinear nonlinearity.

The proof is first carried out under the additional assumption that the
semigroup $(T_t)$ is sub-Markovian. Two ingredients are developed at this
stage. 
First, we prove a Deny-type compactness theorem for supermedian
functions. This provides the compactness needed in the nonlinear
argument without appealing to smoothing properties of the semigroup.
Thus compactness enters the proof through potential theory rather
than through regularization of the semigroup.
Second, we establish a Kato-type inequality for
sub-Markovian semigroups with compact positivity-improving resolvent. These
tools allow us to develop a method of sub- and supersolutions for nonlinear
resolvent equations of the form
\begin{equation}\label{eq:abstract-fixed-point}
u=\gamma+Rg(\cdot,u),
\end{equation}
where $\gamma$ is supermedian. In this way, the analysis is carried out in
a class of functions extending beyond the domain of the operator $L$.
The supermedian term $\gamma$ may be viewed as an abstract analogue of
boundary data. Allowing such a term is also essential in the proof of our main existence theorem, 
where we first solve suitable perturbed equations with strictly positive supermedian 
data and then pass to the limit as this perturbation vanishes.

The probabilistic realization of the resolvent is essential in establishing
these tools. By a theorem of Beznea, Boboc, and R\"ockner \cite{BBR}, the
$L^p$-resolvent can be represented, after passage to a larger Lusin space,
as the resolvent of a right Markov process. This makes available
probabilistic arguments for excessive and supermedian functions while
preserving the original $L^p(E;m)$ formulation of the problem.

Finally, the sub-Markovian assumption is removed by a Doob transform
associated with a supermedian function for  $(T_t)$. The transform reduces
the general positive-semigroup case to the sub-Markovian one while preserving
the spectral quantities relevant to \eqref{eq:spectral}, and hence extends
the existence result to the full class of operators considered in the paper.

\subsection{Organization of the paper}

The paper is organized as follows. Section~\ref{sec.prelim} contains the
analytic preliminaries and the spectral theory of positive compact
resolvents and their perturbations. Section~\ref{sec2} recalls the necessary
facts concerning right Markov processes and the realization theorem used
throughout the probabilistic part of the paper. In Section~\ref{sec3}, we
establish a Deny-type compactness theorem for supermedian functions, while
Section~\ref{sec4} is devoted to a Kato-type inequality. Section~\ref{sec5}
develops the method of sub- and supersolutions for nonlinear resolvent
equations. In Section~\ref{sec6}, we prove the main existence theorem in the
sub-Markovian case. 
Section~\ref{sec7} removes the sub-Markovian assumption by means of the
Doob transform, establishes the general existence theorem and the
monotone selection property described above, and finally uses this
selection to derive the existence result under the weaker lower control condition
on the nonlinearity.
The final section contains the necessary spectral condition under the strict
monotonicity assumption and the uniqueness theorem.

\section{Preliminaries}
\label{sec.prelim}

Throughout the paper, we work on a Lusin measurable space $(E,\mathcal A)$ equipped with a finite measure $m$. 
Let us recall that  $(E,\mathcal{A})$ is called \emph{Lusin} if it is measurably isomorphic to a Borel subset of a Polish space; 
that is, if there exist a Polish space $X$, a Borel set $B \subset X$, and a bijection
\[
\phi : E \to B
\]
such that both $\phi$ and $\phi^{-1}$ are measurable.
We fix  $p\in [1,\infty)$ and denote  by $L^p(E;m)$ the space of all $\mathcal A$-measurable real functions 
$g$ on $E$ such that $|g|^p$ is integrable, modulo the usual $m$-a.e. equivalence relation.
For a measurable function $u:E\to\mathbb R$, $u>0$ means $m(\{u>0\})>0$ and $m(\{u<0\})=0$, 
while $u\gg 0$ means $m(\{u\le 0\})=0$. 
Recall that a positive $C_0$-semigroup $(S_t)$ on  $L^p(E;m)$
is called irreducible if $\{0\}$ and $L^p(E;m)$ are the only closed ideals 
that are invariant under all the operators $S_t, t \geq 0$.   Let $(S_t)$ be a positive $C_0$-semigroup  on  $L^p(E;m)$
with generator $(A,\mathscr D(A))$ and resolvent $(G_\alpha)$. By $s(A)$ we denote the spectral bound of $A$, i.e. 
\[
s(A):=\sup \{\operatorname{Re} \lambda: \lambda \in \sigma(A)\}.
\]
We say that $(G_\alpha)$ is {\em positivity improving} if, whenever  $f>0$, 
$f\in L^p(E;m)$, and $\alpha>s(A)$ one has  $G_\alpha f\gg0$.
By \cite[Proposition 14.10]{BatkaiKramarFijavzRhandi2017} a positive $C_0$-semigroup $(S_t)$ on 
$L^p(E;m)$ with a generator $(A,\mathscr D(A))$ is irreducible if and only if its resolvent $(G_\alpha)$ is positivity improving. 

Throughout the paper, we follow the terminology customary in potential theory, 
according to which a positive function means a nonnegative function.

In the following proposition we collect some  known results from  spectral theory that will be used frequently below.

\begin{proposition}
\label{prop.tool}
Let $(S_t)$ be a positive  $C_0$-semigroup on $L^p(E;m)$ that generates an operator $(A,\mathscr D(A))$
with compact  positivity-improving resolvent. 
Then  the following hold:
\begin{itemize}
\item[(i)] $\sigma(A) \neq \emptyset$, hence $s(A)>-\infty$.
\item[(ii)] The number $\lambda_1(-A):=-s(A)$ is an eigenvalue of $-A$.
\item[(iii)] There exists $u \in \mathscr D(A)$ such that $u\gg0$ and  $-A u=\lambda_1(-A) u$.
\item[(iv)] The geometric multiplicity of the eigenvalue $\lambda_1(-A)$ is one.
\item[(v)] There exists  $f\in \mathscr D(A^*)$ such that $f\gg0$ and $-A^*f=\lambda_1(-A) f$.
\item[(vi)] If $u\in \mathscr D(A)$ is positive and nontrivial, and $-Au=\lambda u$ for some $\lambda\in\mathbb R$,
then $\lambda=\lambda_1(-A)$. 
\end{itemize}
\end{proposition}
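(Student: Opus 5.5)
The plan is to reduce everything to the Krein--Rutman/de Pagter theory for positive compact irreducible operators on the Banach lattice $L^p(E;m)$. Fix $\alpha_0>s(A)$ and set $K:=G_{\alpha_0}$. Then $K$ is positive, compact, and positivity improving, hence irreducible. We also record the resolvent identity, which gives the spectral mapping $\sigma(K)\setminus\{0\}=\{(\alpha_0-\mu)^{-1}:\mu\in\sigma(A)\}$, with matching eigenspaces: $Ku=r u$ with $r\neq 0$ if and only if $u\in\mathscr D(A)$ and $Au=(\alpha_0-r^{-1})u$.

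For (i), we invoke de Pagter's theorem: a compact, positive, irreducible operator on a Banach lattice of dimension greater than one has spectral radius $r(K)>0$. If $\dim L^p=1$ the claim is trivial. Hence $\sigma(K)\neq\{0\}$, and by the spectral mapping $\sigma(A)\neq\emptyset$, so $s(A)>-\infty$. For a positive semigroup we also know $s(A)\in\sigma(A)$ whenever $\sigma(A)\neq\emptyset$, and that $r(K)=(\alpha_0-s(A))^{-1}$. This is the standard positivity result: the resolvent is positive for real $\alpha>s(A)$, and $\|G_\alpha\|\to\infty$ as $\alpha\downarrow s(A)$.

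Parts (ii) and (iii) follow from the Krein--Rutman theorem, in its compact-operator version or Schaefer's Perron--Frobenius theory. Since $r(K)>0$ and $K$ is compact, $r(K)$ is an eigenvalue with a positive eigenvector $u>0$. Then $u=r(K)^{-1}Ku\gg0$, because $K$ is positivity improving. Translating back gives $u\in\mathscr D(A)$ and $-Au=-s(A)u=\lambda_1(-A)u$.

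For (v), $K^*$ is positive and compact on $L^{p'}$, with $r(K^*)=r(K)$, so it has an eigenvector $f>0$ with $K^*f=r(K)f$. To get $f\gg0$, we use that if $f$ vanished on a set $B$ of positive measure, then for any $g>0$ supported in $B$ we would have $0<\langle Kg,f\rangle=r\langle g,f\rangle=0$. Here $Kg\gg0$ and $f>0$, which is a contradiction. Note that for $p=1$ the dual is $L^\infty$ and $A^*$ is only weak$^*$ densely defined, but the eigenvector relation still translates correctly to $f\in\mathscr D(A^*)$ with $-A^*f=\lambda_1 f$, using $G_{\alpha_0}^*=(\alpha_0-A^*)^{-1}$.

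For (iv) and (vi), we use the strictly positive dual eigenvector. For (vi): if $u>0$ and $-Au=\lambda u$, then $\lambda u=\langle -Au,f\rangle$ is paired with $f$ to give $\lambda\langle u,f\rangle=\langle u,-A^*f\rangle=\lambda_1\langle u,f\rangle$. Since $\langle u,f\rangle>0$, this forces $\lambda=\lambda_1$. For (iv), let $v$ be a real eigenvector; a complex one splits into real and imaginary parts. Choose $t$ maximal with $u-tv\ge0$, which is possible since $u\gg0$; we may need a sup/inf argument and care about whether such $t$ is finite, otherwise swap the roles using $\pm v$. Then $w:=u-tv\ge0$ satisfies $Kw=rw$. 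If $w>0$ then $w\gg0$, which contradicts the maximality of $t$. To make that step rigorous, it is cleaner to argue instead: $|v|$ satisfies $K|v|\ge r|v|$, and pairing with $f$ gives $K|v|=r|v|$, so $|v|\gg0$. Then $v^+$ and $v^-$ are both eigen-supersolutions, and the same pairing argument shows $Kv^\pm=rv^\pm$. Each of $v^+$ and $v^-$ is either $0$ or $\gg0$, and they have disjoint supports, so $v$ has constant sign. Finally, applying this to $u-tv$ with $t=\langle u,f\rangle/\langle v,f\rangle$ gives $\langle u-tv,f\rangle=0$. A constant-sign vector orthogonal to $f\gg0$ must vanish, so $u-tv=0$.

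The main obstacle is (i), the nonemptiness of the spectrum. It genuinely requires de Pagter's theorem, since compact quasinilpotent positive operators exist without irreducibility. The other parts are routine Perron--Frobenius arguments once $r(K)>0$.
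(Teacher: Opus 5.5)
Your proof is correct. For (i)–(iii) it follows the paper: de Pagter's theorem gives $r(G_{\alpha_0})>0$, and the standard positivity facts locate $s(A)$. These facts are $s(A)\in\sigma(A)$ and $r(G_{\alpha_0})=(\alpha_0-s(A))^{-1}$, which is what the paper cites for (ii). Krein--Rutman together with positivity improvement then gives $u\gg0$.

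You differ from the paper in (iv)–(vi). The paper simply cites the structure theorem for irreducible positive semigroups whose spectral bound is a pole of the resolvent, after remarking that a compact resolvent makes every spectral point such a pole. It then cites Schaefer's Theorem V.5.2(iv) for (vi). You instead derive all three from a single object: the Krein--Rutman eigenvector $f$ of $K^*=G_{\alpha_0}^*$. You get its strict positivity from the one-line pairing $\langle K\mathbf 1_B,f\rangle=r\langle \mathbf 1_B,f\rangle$. Pairing with $f\gg0$ turns $Kv^\pm\ge rv^\pm$ into equalities, which forces every eigenvector to have constant sign and hence gives (iv). The same pairing gives (vi) in one line.

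The cited route yields more, namely algebraic simplicity of $\lambda_1(-A)$ and a strictly positive rank-one residue. Yours is self-contained, avoids pole theory, and uses exactly the pairing device with $\varphi_1'$ that the paper relies on later. The same device could even replace your somewhat hand-wavy justification of $s(A)\in\sigma(A)$. If $Aw=\mu w$, then $e^{\operatorname{Re}\mu\, t}|w|\le S_t|w|$. Pairing with $f$, which satisfies $S_t^*f=e^{\mu_0 t}f$ with $\mu_0:=\alpha_0-r(K)^{-1}$, gives $\operatorname{Re}\mu\le\mu_0$. Since the spectrum consists of eigenvalues, $s(A)=\mu_0$.

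Two cosmetic points. First, drop the abandoned ``maximal $t$'' sketch and the $|v|$ step, since the $v^\pm$ argument is already complete on its own. Second, in (vi) the identity should read $\lambda\langle u,f\rangle=\langle -Au,f\rangle$.
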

\begin{proof}
The assertion (i) is a consequence of  a result of de Pagter \cite[Theorem 3]{dePagter1986}.
For (ii) see \cite[Corollary 12.9]{BatkaiKramarFijavzRhandi2017}.
The assertion (iii) follows from the Krein-Rutman theorem (see, e.g., \cite[Theorem 12.15]{BatkaiKramarFijavzRhandi2017})
and the positivity-improving property of the resolvent.  The assertions (iv) and (v) follow from 
 \cite[Proposition 14.12(d)]{BatkaiKramarFijavzRhandi2017} (note that by \cite[Chapter~IV, Corollary~1.19]{EngelNagel2000}, 
 the spectrum of an operator with compact resolvent consists only of poles of the resolvent of finite algebraic multiplicity). 
 For (vi) see, e.g.,  \cite[Theorem V.5.2(iv)]{Schaefer1974}.
\end{proof}

In what follows, any pair  $(\lambda_1(-A),u)$ as in  Proposition 
\ref{prop.tool} will be  called a principal  eigenpair  associated with   $(A,\mathscr D(A))$; 
here $\lambda_1(-A)$ is  the principal eigenvalue and $u$ is a principal eigenfunction. Similarly, any  pair  
$(\lambda_1(-A),f)$ as in  Proposition \ref{prop.tool} will be called a dual 
principal eigenpair associated with   $(A,\mathscr D(A))$, and
 $f$ will be  called a dual  principal eigenfunction.

\begin{corollary}
\label{cor.pert1}
Let $(A,\mathscr D(A))$ generate a positive $C_0$-semigroup $(S_t)$ on $L^p(E;m)$.
Assume that $A$ has compact and positivity-improving resolvent and  $a\in L^\infty(E;m)$.
Then the operator $A-a$
generates a positive $C_0$-semigroup $(S_t^a)$ on $L^p(E;m)$ that 
has compact positivity-improving resolvent.
Finally, if $a_1,a_2\in L^\infty(E;m)$  and
$a_1\le a_2$ $m$-a.e., then
\[
\lambda_1(-A+a_1)\le \lambda_1(-A+a_2).
\]
\end{corollary}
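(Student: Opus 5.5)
The plan has three steps: show that $A-a$ generates a positive semigroup, show that its resolvent is compact and positivity improving, and then compare principal eigenvalues using Proposition~\ref{prop.tool}.

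\emph{Generation and positivity.} Multiplication by $-a$ is a bounded operator on $L^p(E;m)$. By the bounded perturbation theorem, $A-a$ with domain $\mathscr D(A)$ generates a $C_0$-semigroup $(S^a_t)$. For positivity, write $-a=-\|a\|_\infty+(\|a\|_\infty-a)$. The operator $A-\|a\|_\infty$ generates the positive semigroup $e^{-\|a\|_\infty t}S_t$. The perturbation $B:=\|a\|_\infty-a\ge0$ is a positive bounded operator, so the Dyson--Phillips series
\[
S^a_t=\sum_{n\ge0}S^{(n)}_t,\qquad S^{(n+1)}_t=\int_0^t e^{-\|a\|_\infty (t-s)}S_{t-s}BS^{(n)}_s\,ds,
\]
has positive terms, and hence $S^a_t\ge0$.

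\emph{Compact, positivity-improving resolvent.} Denote by $G^a_\alpha$ the resolvent of $A-a$. For $\alpha$ large, $\alpha>s(A)+\|a\|_\infty$, the Neumann series gives
\[
G^a_\alpha=\sum_{n\ge0}G_{\alpha+\|a\|_\infty}(BG_{\alpha+\|a\|_\infty})^n .
\]
This operator is compact, because each term is compact and the series converges in operator norm. It is positivity improving, because the $n=0$ term $G_{\alpha+\|a\|_\infty}$ already is and all other terms are positive.

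Compactness then transfers to all $\alpha$ in the resolvent set by the resolvent identity. For positivity improvement at every $\alpha>s(A-a)$, I would argue through irreducibility rather than redo the series there. Closed ideals in $L^p$ have the form $L^p(F)$, and invariance under $(S^a_t)$ is equivalent to $G^a_\alpha L^p(F)\subset L^p(F)$ for large $\alpha$. This fails for $F\neq\emptyset,E$ up to null sets, since $G^a_\alpha$ is positivity improving there. Hence $(S^a_t)$ is irreducible, and the equivalence from \cite[Proposition 14.10]{BatkaiKramarFijavzRhandi2017} gives positivity improvement for all $\alpha>s(A-a)$. This transfer from "large $\alpha$" to "all $\alpha>s$" is the only step that needs care, and routing it through irreducibility is the cleanest way.

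\emph{Monotonicity.} Let $a_1\le a_2$, and let $(\lambda_1(-A+a_1),u)$ be a principal eigenpair of $A-a_1$. Let $(\lambda_1(-A+a_2),f)$ be a dual principal eigenpair of $A-a_2$, which exists by Proposition~\ref{prop.tool}(v). Since $\mathscr D(A-a_i)=\mathscr D(A)$ and $(A-a_2)^*=A^*-a_2$, we get
\[
\lambda_1(-A+a_2)\langle u,f\rangle=\langle u,(-A^*+a_2)f\rangle=\langle(-A+a_1)u,f\rangle+\langle(a_2-a_1)u,f\rangle\ge\lambda_1(-A+a_1)\langle u,f\rangle .
\]
Because $u\gg0$ and $f\gg0$, we have $\langle u,f\rangle>0$. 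Dividing by it yields $\lambda_1(-A+a_1)\le\lambda_1(-A+a_2)$.
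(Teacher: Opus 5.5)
Your proof is correct, but it takes a different route from the paper at each step. The paper proceeds as follows:
\begin{itemize}
\item positivity of $(S^a_t)$ comes from the Trotter product formula;
\item compactness comes from the factorization $R(\lambda,A-a)=R(\lambda,A)\bigl(I+M_aR(\lambda,A)\bigr)^{-1}$;
\item irreducibility comes from the domination $0\le e^{-\|a\|_\infty t}S_t\le S^a_t$ of \cite[Corollary VI.1.11(i)]{EngelNagel2000}. This makes every closed ideal invariant under $(S^a_t)$ also invariant under $(S_t)$, and \cite[Proposition 14.10]{BatkaiKramarFijavzRhandi2017} then gives positivity improvement for every $\alpha>s(A-a)$.
\end{itemize}

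Your Dyson--Phillips series contains this same domination as its $n=0$ term. So you could have transferred irreducibility from $(S_t)$ directly, without the Neumann series or the description of closed ideals. Alternatively, positivity improvement passes from a large $\lambda$ to every $\mu\in(s(A-a),\lambda)$ directly, because $G^a_\mu=G^a_\lambda+(\lambda-\mu)G^a_\mu G^a_\lambda\ge G^a_\lambda$. Your longer route is still sound. Your threshold $\alpha>s(A)+\|a\|_\infty$ suffices: with $\beta:=\alpha+\|a\|_\infty$, positivity gives $r(BG_\beta)\le 2\|a\|_\infty\, r(G_\beta)=2\|a\|_\infty/(\beta-s(A))<1$.

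The substantive difference is the monotonicity step. The paper cites monotonicity of the spectral bound under positive bounded perturbations, \cite[Corollary VI.1.11(ii)]{EngelNagel2000}, which holds for arbitrary positive semigroups. You instead pair a principal eigenfunction $u\gg0$ of $A-a_1$ with a dual principal eigenfunction $f\gg0$ of $A-a_2$ from Proposition~\ref{prop.tool}. Your argument depends on the compact positivity-improving structure established in the first part. In exchange, it is self-contained and yields the explicit identity
\[
\lambda_1(-A+a_2)-\lambda_1(-A+a_1)=\frac{\langle (a_2-a_1)u,f\rangle}{\langle u,f\rangle}.
\]
This gives strict inequality as soon as $m(\{a_1<a_2\})>0$. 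For bounded potentials, that is exactly the strict comparison the paper needs later in Theorem~\ref{th.pert1}(iv), which it obtains there by citing Arendt--Daners.
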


\begin{proof}
Denote by $M_a$ the bounded
multiplication operator $M_af=af$. By the bounded perturbation theorem,
$A-M_a$ generates a $C_0$-semigroup on $L^p(E;m)$; see
\cite[Theorem III.1.3]{EngelNagel2000}.
Since the semigroup generated by $-M_a$ is the positive multiplication semigroup
$f\mapsto e^{-ta}f$, positivity of $(S_t^a)_{t\ge0}$ follows from the Trotter product
formula, see \cite[Corollary III.5.8]{EngelNagel2000}.
To prove irreducibility of $(S_t^a)_{t\ge0}$, note that
\[
A-a=(A-\|a\|_\infty )+M_{\|a\|_\infty-a},
\qquad M_{\|a\|_\infty-a}\ge0.
\]
Hence, by \cite[Corollary VI.1.11(i)]{EngelNagel2000},
\[
0\le e^{-\|a\|_\infty t}S_t\le S_t^a,
\qquad t\ge0.
\]
Therefore every closed ideal invariant under $(S_t^a)$ is also invariant under
$(S_t)$, and since $(S_t)$ is irreducible, so is $(S_t^a)$.
Compactness of the resolvent of $A-a$ follows from the  resolvent identity
\[
R(\lambda,A-a)=R(\lambda,A)\bigl(I+M_aR(\lambda,A)\bigr)^{-1},
\qquad \lambda\in\rho(A)\cap\rho(A-a).
\]
Since $R(\lambda,A)$ is compact and $\bigl(I+M_aR(\lambda,A)\bigr)^{-1}$ is bounded,
$R(\lambda,A-a)$ is compact.
If now $a_1\le a_2$ $m$-a.e., then
\[
A-a_1=(A-a_2)+M_{a_2-a_1},
\qquad M_{a_2-a_1}\ge0.
\]
By monotonicity of the spectral bound under positive bounded perturbations,
\cite[Corollary VI.1.11(ii)]{EngelNagel2000},
\[
s(A-a_2)\le s(A-a_1),
\]
which, combined with Proposition \ref{prop.tool}(ii),   completes the proof.
\end{proof}

\begin{hypothesis}
\label{hyp.spectral-bound}
Throughout the remainder of the paper, unless stated otherwise,
$(L,\mathscr D(L))$ denotes a closed operator on $L^p(E;m)$ generating a positive $C_0$-semigroup $(T_t)$
whose resolvent $(R_\alpha)$  is compact and positivity improving.
Furthermore,  the spectral bound of $L$ is
assumed to be  strictly negative, that is,
\[
s(L)<0.
\]
Consequently, $0\in\rho(L)$, and throughout the paper we write
\[
R:=R_0=(-L)^{-1}.
\]
\end{hypothesis}

For a given strictly positive supermedian function $\psi$, 
we consider the following condition:
\begin{enumerate}
\item[(F3)$_\psi$]   for any $\delta>0$ there exists $C_\delta\ge 0$  such that $f(x,y)\ge -C_\delta y$ 
for $y\in [0,\delta \psi(x)]$ and $x\in E$.
\end{enumerate}

\begin{hypothesis}
\label{hyp.fff}
We  assume throughout that
$f:  E\times \mathbb R\to \mathbb R$ is a Carath\'eodory function satisfying the following conditions:

\begin{enumerate}
\item[(F1)] there exists $\lambda\ge 0$ and positive $h\in L^p(E;m)$ such that $f(x,y)\le h(x)+\lambda y,\, y\ge 0$;
\item[(F2)] $[0,\infty)\ni y\mapsto f(x,y)$ is continuous for any $x\in E$;
\item[(F3)]  there exists a strictly positive supermedian function $\psi$ such that (F3)$_\psi$ holds.
\end{enumerate}
\end{hypothesis}

We introduce, for any function $g:E\times\mathbb R\to\mathbb R$,  the generalized slopes
\begin{equation}
\label{eq.slopes}
a_0(g)(x)=\liminf_{y\to0^+}\frac{g(x,y)}{y},\qquad
a_\infty(g)(x)=\limsup_{y\to\infty}\frac{g(x,y)}{y}.
\end{equation}
We set  $a_0^k(g):=a_0(g)\wedge k$, $a_\infty^k(g):=a_\infty(g)\vee(-k)$. 
For brevity we let $a_0:=a_0(f)$, $a_\infty:=a_\infty(f)$, $a_0^k:=a^k_0(f)$, $a^k_\infty:=a^k_\infty(f)$.

{\bf Generalized principal eigenvalues.} 
If $[a_0(g)]^-,[a_\infty(g)]^+\in L^\infty(E;m)$, we set 
\[
\lambda_1^L(a_0^k(g)):=\lambda_1(-L-a_0^k(g)),\quad \lambda_1^L(a_\infty^k(g)):= \lambda_1(-L-a_\infty^k(g)).
\]
By Corollary \ref{cor.pert1} $\lambda_1^L(a_0^k(g))\ge \lambda_1^L(a_0^{k+1}(g))$ and   
$\lambda_1^L(a_\infty^k(g))\le \lambda_1^L(a_\infty^{k+1}(g))$ for $k\ge 1$. We let 
\[
\lambda_1^L(a_0(g)):=\inf_{k\ge 1}\lambda_1^L(a_0^k(g)),
\qquad
\lambda_1^L(a_\infty(g)):=\sup_{k\ge 1}\lambda_1^L(a_\infty^k(g)).
\]
In what follows, we mostly omit the superscript $L$ from the notation of the corresponding eigenvalues, 
except where it is useful for clarity.

\begin{remark}
\label{rem.shift1}
If the spectral bound of the operator  under consideration is non-negative, we choose
$\kappa\ge 0$ sufficiently large so that
\[
s(L_\kappa)<0,
\qquad
L_\kappa:=L-\kappa I,
\]
and apply the existence results below to the shifted equation
\begin{equation}\label{eq1.1l}
    -L_\kappa u=f^\kappa(\cdot,u)\quad\text{in }E,
\end{equation}
where
\[
f^\kappa(x,y):=f(x,y)+\kappa y.
\]
We can do this because $(L_\kappa,\mathscr D(L))$ generates the positive
$C_0$-semigroup $(e^{-\kappa t}T_t)_{t\ge0}$, and its resolvent is
compact and positivity improving.  Moreover,
$f^\kappa$ satisfies \textup{(F1)--(F3)}, and 
\[
a_0(f^\kappa)=a_0+\kappa,
\qquad
a_\infty(f^\kappa)=a_\infty+\kappa,
\]
which, in turn,  yields that 
the operators determining the corresponding spectral conditions remain
unchanged. 
Indeed, for every $k>\kappa$,
\[
-L_\kappa-a_0(f^\kappa)\wedge k
=
-L-a_0\wedge(k-\kappa).
\]
Similarly, for every $k\ge1$,
\[
-L_\kappa-a_\infty(f^\kappa)\vee(-k)
=
-L-a_\infty\vee(-(k+\kappa)).
\]
Letting $k\to\infty$ in the definitions of the generalized principal
eigenvalues yields
\[
\lambda_1^{L_\kappa}(a_0(f^\kappa))
=
\lambda_1^{L}(a_0),
\qquad
\lambda_1^{L_\kappa}(a_\infty(f^\kappa))
=
\lambda_1^{L}(a_\infty).
\]
Therefore, although the generalized slopes are shifted by $\kappa$, the
spectral conditions used in the existence results are invariant under this
transformation.
\end{remark}

An important   technical point is that, under the above assumptions and by
 the Rohlin theorem, $R_\alpha$ admits a kernel representation on $(E,\mathcal A)$
for every  $\alpha>s(L)$. More precisely,  there exists a family $\{r_\alpha(x,dy),\, x\in E\}$
such that $x\mapsto r_\alpha(x,A)$ is $\mathcal A$-measurable for every 
$A\in\mathcal A$, $r_\alpha(x,dy)$ is a  measure on $\mathcal A$ 
for every  $x\in E$, and,  for any positive  function $g\in L^p(E;m)$
\begin{equation}
\label{eq.rochlin}
\widetilde{R_\alpha g}(x)=\int_E \tilde g(z)\,r_\alpha(x,dz)\quad m\text{-a.e.},
\end{equation}
where $\widetilde{R_\alpha g}$ and $\tilde g$ are representatives of the 
$m$-equivalence classes of $R_\alpha g$ and $g$, respectively.
Consequently,  $R_\alpha$ extends naturally to positive $\mathcal A$-measurable functions on  $E$  (\cite[Theorem 2.1]{Simmons} 
or \cite[Theorem 2.2]{BBR}).  This feature is central in our framework because 
in general the negative part of the nonlinearity in \eqref{eq1.1} 
is not automatically controlled in $L^p$.

To simplify notation, we omit the tildes in identities of the form
\eqref{eq.rochlin} and adopt the following convention.

\begin{remark}
\label{rem.not}
Let $\mathcal X$ be a set  of pointwise-defined $\mathcal A$-measurable
functions on $E$, and let $\mathcal Z$ be a set  of $\mu$-equivalence
classes of functions from $\mathcal X$, for a given measure $\mu$ on $\mathcal A$.
Suppose that  $A:\mathcal X\to\mathcal X$,  $B:\mathcal Z\to \mathcal Z$  are linear operators.
Throughout the paper, whenever we compare such operators, we shall use the
following convention. If $f\in \mathcal Z$, then an identity written in the
form
\[
Af = Bf
\]
means that, after choosing a representative $\tilde f\in\mathcal X$ of the
$\mu$-equivalence class $f$, the function $A\tilde f$ is a representative
of the $\mu$-equivalence class $Bf$.
Similarly, if $g\in\mathcal X$ and the $\mu$-equivalence class  $[g]_{\mu}$ belongs to $\mathcal Z$, then an identity
written in the form
\[
Ag = Bg
\]
is understood as
\[
[Ag]_{\mu} = B[g]_{\mu}.
\]
Observe that the assertion $Af=Bf$ for $f\in\mathcal Z$ entails the following
property of the operator $A$: whenever $f,g\in\mathcal X$ satisfy
$f=g$ $\mu$-a.e., one has $Af=Ag$ $\mu$-a.e.
\end{remark}
  
\begin{remark}[Completion of the underlying measure space]
The results concerning $L^p$-solutions remain valid when the
underlying measure space is replaced by its completion.
Indeed, let  $(E,\mathcal A^m,\bar m)$ denote the completion of $(E,\mathcal A,m)$.
The canonical identification
\[
L^p(E,\mathcal A,m)
=
L^p(E,\mathcal A^m,\bar m)
\]
is an isometric lattice isomorphism. Thus the semigroup,
its generator and resolvent, and the corresponding spectral
quantities are unchanged under this identification.

Moreover, if $f$ is measurable in $x$ with respect to
$\mathcal A^m$ and continuous in $y$, it admits a
$\mathcal A$-measurable Carath\'eodory version $\widetilde f$
such that, outside a single $m$-null set,
\[
\widetilde f(x,y)=f(x,y)
\qquad\text{for every }y\ge0.
\]
This follows by choosing measurable versions at nonnegative
rational values of $y$ and using continuity.
Consequently, the assumptions on $f$, the asymptotic slopes,
and the notion of solution are preserved.

In particular, using the Lebesgue $\sigma$-algebra instead of
the Borel $\sigma$-algebra on a Euclidean domain does not
affect the $L^p$-problem considered here. The probabilistic
realization is carried out on the underlying Lusin measurable space.
\end{remark}

\section{Right Markov processes and  potential theory}
\label{sec2}

Recall that a topological space is Lusin if it is homeomorphic to a Borel subset of a Polish space. 
Let $Y$ be a Lusin topological space and set
\[
Y_\partial:=Y\cup\{\partial\},
\]
where $\partial$ is an isolated point. We denote by $\mathcal{B}(Y_\partial)$ the Borel $\sigma$-algebra of 
$ Y_{\partial}$ and, for $B\in \mathcal{B}(Y_\partial)$, set
\[
\mathcal B(B):=\{C\in\mathcal B(Y_\partial): C\subset B\}.
\]
Every numerical function on $Y$, with values in
$\mathbb R\cup\{+\infty\}\cup\{-\infty\}$, is tacitly extended to $Y_\partial$  by assigning the value $0$ at $\partial$. 
The same notation $\mathcal B(B)$ will also be used for the class of numerical $\mathcal B(B)$-measurable functions on $B$.
A subset $A\subset Y_\partial$ is said to be universally measurable if, for every positive finite measure $\mu$ 
on $\mathcal B(Y_\partial)$, there exist $A_1,A_2\in\mathcal B(Y_\partial)$ such that
\[
A_1\subset A\subset A_2,
\qquad
\mu(A_2\setminus A_1)=0.
\]
The universally measurable subsets of  $B$ form a $\sigma$-algebra, denoted by $\mathcal{B}^u(B)$. 
Every $\sigma$-finite positive measure on $\mathcal B(B)$ admits a unique extension to $\mathcal B^u(B)$. 
Finally, if $\mathcal H$ is a class of numerical functions, we denote by $p\mathcal H$ and $b\mathcal H$ 
its subclasses of nonnegative and  bounded functions, respectively.
It is readily verified that a positive numerical function   $g$ on $B$  belongs to $p \mathcal{B}^u(B)$
 if and only if for every finite measure $\mu$ on $\mathcal{B}(B)$ there exist  
 $g_1, g_2 \in p \mathcal{B}(B)$ such that $g_1 \leq g \leq g_2$ and $\mu(g_1<g_2)=0$.

Let $(\Omega, \mathcal{F})$ be a measurable space. 
A function $X:[0,\infty]\times\Omega\to Y_\partial$ is called a  stochastic process on 
$(\Omega, \mathcal{F})$ with state space $Y$ if for any  $t\in [0,\infty]$ the mapping 
$X(t,\cdot)$ is  $\mathcal{F} / \mathcal{B}(Y_\partial)$-measurable,  
$X(\infty,\omega)=\partial$ for all $\omega \in \Omega$, 
and the following property holds for any $\omega\in\Omega$:  
if  for some $t_0\ge 0$, $X(t_0,\omega)=\partial$,  then $X(t,\omega)=\partial$ 
for any  $t\ge t_0$. In what follows we use the notation $X_{t}(\omega):= X(t,\omega)$
for any stochastic process $X$. The function $\zeta: \Omega \longrightarrow[0, \infty]$ defined by 
\[
\zeta(\omega)=\inf \{t\ge 0: X_t(\omega)=\partial\}
\]
is called the lifetime of the stochastic process $X$.

A stochastic process $X$ on $(\Omega, \mathcal{F})$ with state space $Y$
 is called right-continuous if for every $\omega \in \Omega$ the function 
 $t \longmapsto X_t(\omega)$ is right-continuous on $[0, \infty)$.
 
 A filtration on a measurable space $(\Omega, \mathcal{F})$  is a family 
 $\left(\mathcal{F}_t\right)_{t \geq 0}$ of $\sigma$-algebras $\mathcal{F}_t$
 included in  $\mathcal{F}$ such that $\mathcal{F}_s \subset \mathcal{F}_t$ whenever  $s\le t$. 
 We let  $\mathcal{F}_{\infty}:=\sigma(\bigcup_{s<\infty} \mathcal{F}_s)$. 
For every $t \in \mathbb{R}_{+}$ we put $\mathcal{F}_{t+}=\bigcap_{s>t} \mathcal{F}_s$. 
A  filtration $\left(\mathcal{F}_t\right)_{t \geq 0}$ 
is called right-continuous provided that $\mathcal{F}_t=\mathcal{F}_{t+}$ for all $t \geq 0$.
A  stochastic process $X$ on $(\Omega, \mathcal{F})$
with state space $Y$ is called $\left(\mathcal{F}_t\right)$-adapted 
provided that the map $X_t$ is $\mathcal{F}_t / \mathcal{B}(Y_\partial)$-measurable for any  $t \geq 0$. 

A transition function on $Y_\partial$ is a family of kernels $(P_t)_{t \geq 0}$
on $(Y_\partial, \mathcal{B}^u(Y_\partial))$ 
such that $P_t1= 1$, and  $P_t(P_s f)=P_{s+t} f$
for all $f \in p \mathcal{B}^u(Y_\partial)$ and $s, t \in \mathbb{R}_{+}$. 

Let $(\Omega, \mathcal{F})$ be a measurable space and   $\left(P_t\right)_{t \geq 0}$
be a transition function on $Y_\partial$.  
A collection $\mathbb X=\left( \Omega, \mathcal{F}, (\mathcal{F}_t)_{t\ge 0},X, (\theta_t)_{t\ge 0}, 
(\mathbb P_x)_{x\in Y_\partial}\right)$
 is called a Markov process with state space $Y$ and  transition function $\left(P_t\right)_{t \geq 0}$ if:
 \begin{itemize}
\item[(i)] the family $\left(\mathcal{F}_t\right)_{t \geq 0}$ is a filtration on 
$(\Omega, \mathcal{F})$ and  $X$ is a $\left(\mathcal{F}_t\right)$-adapted
stochastic  process with state space $Y$;
\item[(ii)]$\theta_t:  \Omega \longrightarrow  \Omega$ 
and $X_s \circ \theta_t=X_{s+t}$ for all $s,t\ge 0$;
\item[(iii)] $\mathbb P_x$ is a probability measure on $(\Omega, \mathcal{F})$ for any $x \in Y_\partial$, and 
the mapping   $x \longmapsto \mathbb P_x(A)$ is $\mathcal{B}^u(Y_\partial)$-measurable for any  $A \in  \mathcal{F}$;
\item[(iv)]
for any $f\in p \mathcal{B}^u(Y)$, $s, t \geq 0$, $G \in \mathcal{F}_s$,
\[
\mathbb E_x\left(f(X_{s+t}) \mathbf 1_G\right)= 
\mathbb E_x\left(P_t f(X_s) \mathbf 1_G\right),\quad \mathbb E_x\left(f(X_0)\right)=f(x),\quad x \in Y_\partial.
\] 
\end{itemize}

For any probability measure   $\mu$ on $\mathcal{B}(Y_\partial)$ we let 
$\mathbb P_\mu$ denote  the probability measure on $(\Omega, \mathcal{F})$ 
given by $\mathbb P_\mu(F):=\int \mathbb P_x(F) d \mu(x),\, F \in \mathcal{F}$. 
We denote by $\mathcal{F}^\mu$ the completion of $\mathcal{F}$ with respect to $\mathbb P_\mu$. 
Next,  for each $t \geq 0$ we let $\mathcal{F}_t^\mu$ denote the completion of $\mathcal{F}_t$
 in $\mathcal{F}^\mu$ with respect to $\mathbb P_\mu$. We set
$$
\widetilde{\mathcal{F}}_t:=\bigcap_\mu \mathcal{F}_t^\mu \quad, \quad \widetilde{\mathcal{F}}:=\bigcap_\mu \mathcal{F}^\mu,
$$
where both  intersections run over the set of all probability measures $\mu$ on $\mathcal{B}(Y_\partial)$.

If $\mathbb X$ is a right-continuous Markov process with transition function $(P_t)$, then 
one easily sees that for every 
bounded continuous function $f$ on $Y$   and $x \in Y$ the function $t \longmapsto P_t f(x)$
is right-continuous and therefore the function $(t, x) \longmapsto P_t f(x)$ is 
$\mathcal{B}([0, \infty)) \otimes \mathcal{B}^u(Y_\partial)$-measurable. 
Thus, we may define the kernel  $\mathcal{U}=\left(U_\alpha\right)_{\alpha>0}$ on $(Y_\partial, \mathcal{B}^u(Y_\partial))$
 by
$$
U_\alpha f(x)=\int_0^{\infty} e^{-\alpha t} P_t f(x) d t, \quad \text { for all } f \in p \mathcal{B}^u(Y_\partial),\, x\in Y_\partial.
$$
The family $\mathcal{U}$ is called the resolvent kernel associated with the Markov process $\mathbb X$.

\begin{definition}
A function $u\in p \mathcal{B}^u(Y)$ is called $\alpha$-excessive ($\alpha\ge 0$) with respect to $\mathcal U$ if
\begin{enumerate}
\item[(i)] $\beta U_{\alpha+\beta}u(x)\le u(x),\, x\in Y,\, \beta>0$;
\item[(ii)] $\beta U_{\alpha+\beta}u(x)\nearrow u(x),\, x\in Y$ as $\beta\to \infty$.
\end{enumerate}
\end{definition}
For $0$-excessive functions with respect to $\mathcal U$, we simply use the
term $\mathcal U$-excessive.

\begin{definition}
A right-continuous Markov process $\mathbb X$ with state space $Y$ and 
transition function $\left(P_t\right)_{t \geq 0}$ is called a right (Markov) process if the following properties hold:
\begin{enumerate}
\item[(i)] the filtration $\left(\mathcal{F}_t\right)_{t \geq 0}$ is right-continuous and  $\mathcal{F}_t=\widetilde{\mathcal{F}}_t, t\ge 0$.
\item[(ii)] for each $\alpha$-excessive function $u$ (with respect to $\mathcal{U}$) 
and every probability measure   $\mu$ on $\mathcal B(Y)$ the function $t \longmapsto u(X_t)$ is right-continuous on 
$[0, \infty]$, $\mathbb P_\mu$-a.e.
\end{enumerate}
\end{definition}

\begin{definition}
\label{def.rep}
Let $Y$ be a Lusin topological space and $\mu$ be a finite Borel measure on $Y$. Assume that $(S_t)$
is a positive $C_0$-semigroup on $L^p(Y;\mu)$ with generator $(A,\mathscr D(A))$ 
and the resolvent $(G_\alpha)$, and that  $s(A)<0$. 
We say that a right Markov process $\mathbb X$ with state space $Y$ and a transition function $(P_t)$
{\em represents } $(S_t)$ (or represents $(G_\alpha)$) if for any  $f\in p\mathcal B(Y)$, with $\int_Y|f|^p\,d\mu<\infty$,
and any $\alpha>s(A)$,
\begin{equation}
\label{eq.equiv123}
U_\alpha f=G_\alpha f,\quad \mu\text{-a.e.}
\end{equation}
(see the convention adopted in Remark \ref{rem.not}).
\end{definition}
Let us now recall the notion of terminal  times and additive functionals.  
Let $\mathcal U=(U_\alpha)$ be a resolvent kernel on $Y$.
A $\sigma$-finite Borel measure $\mu$ on $Y$
is called $\mathcal U$-excessive if for any positive function $g\in \mathcal B(Y)$, we have 
\[
\int_Y\alpha U_\alpha g\,d\mu\le \int_Y g\,d\mu.
\]
A set $A\subset Y$ is called $\mu$-polar if there exists a $\mathcal U$-excessive function $u$ on 
$Y$ such that $A\subset \{u=\infty\}$ and $\mu(\{u=\infty\})=0$.

\begin{definition} 
Let $\mathbb X$ be a right Markov process on a Lusin topological space $Y$.
\begin{enumerate}
\item[(i)] A random variable $\tau:\Omega\to [0,\infty]$ is called a stopping time if $\{\tau\le t\}\in \mathcal F_t$ for every $t\ge 0$.
\item[(ii)]
An   $(\mathcal F_t)$-adapted  process $(A_t)$,  with values in $[0,\infty]$ 
and $A_0=0$, is called a {\em positive  additive functional of $\mathbb X$} 
if  there exist  $\Omega_d \in \mathcal{F}$ (called a defining set for $(A_t)$), and a Borel 
$\mu$-polar set $N$ (called an exceptional set for $(A_t)$) such that
\begin{itemize}
\item $\mathbb{P}_x(\Omega_d)=1$,  $x \notin N$;
\item $\theta_t \Omega_d \subset \Omega_d$ for all $t \geq 0$;
\item for all $\omega \in \Omega_d$ the mapping $t \mapsto A_t(\omega)$ 
is non-decreasing and right-continuous on $[0, \infty)$ and finite-valued on $[0, \zeta(\omega))$;
\item for all $\omega \in \Omega_d$ and $s, t \geq 0$: $A_{t+s}(\omega)=A_t(\omega)+A_s\left(\theta_t \omega\right)$.
\end{itemize}
\item[(iii)] A stopping time $\tau$ is called an exact  terminal time provided that
for any  Borel probability  measure $\mu$ on $Y$ and each stopping time $\sigma$
we have 
\[
\tau=\sigma+\tau \circ \theta_{\sigma},\quad \mathbb P_\mu\text{-a.s. on } \{\tau>\sigma\},
\]
and whenever $t_n\searrow 0$, then $t_n+\tau\circ \theta_{t_n}\to \tau$ $\mathbb P_\mu$-a.s.
\end{enumerate}
\end{definition}

\begin{remark}
\label{rem.main}
Let $(E,\mathcal A,m)$ be a Lusin measure space as introduced at the beginning of Section \ref{sec.prelim}.
Suppose that  the resolvent $(R_\alpha)$ of $(L,\mathscr D(L))$ is sub-Markovian, i.e. $\alpha R_\alpha1\le 1$
for any $\alpha>0$. 
Then, by  \cite[Theorem 2.2]{BBR} there exists a   Lusin topological space 
$\hat E$ and  a right Markov process $\hat{\mathbb  X}$ with state space 
$\hat E$ and the  resolvent kernel  $\hat{\mathcal U}:= (\hat U_\alpha)_{\alpha>0}$, such that:
\begin{enumerate}[(i)]
\item  $E \subset \hat E$, 
$E\in \mathcal{B}(\hat E)$, $\mathcal A=\{B\cap E:B\in\mathcal B(\hat E)\}$, 
\item Let $\bar{m}$ be  the extension of $m$ to  $\mathcal{B}(\hat E)$ defined by $\bar m(\hat E\setminus E)=0$. 
Then each $\hat U_\alpha$ is well defined as an  operator  on  
$L^p(\hat E, \bar{m})$, i.e. 
whenever $f,g\in p\mathcal B^u(\hat E)$,  and $f=g$ $\bar{m}$-a.e., 
we have $\hat U_\alpha f=\hat U_\alpha g$ $\bar m$-a.e. 
\item Let $\hat R_\alpha$ be the trivial extension of $R_\alpha$ to $L^p(\hat E,\bar m)$, i.e.
for $f\in L^p(\hat E,\bar m)$, we let $\hat R_\alpha f:= R_\alpha (f\mathbf1_E)$ on $E$ and zero on 
$\hat E\setminus E$. Then the  operators  $\hat U_\alpha$ and $\hat R_\alpha$  
coincide on $L^p(\hat E,\bar m)$   for any $\alpha> 0$
(see Remark \ref{rem.not}).  
\end{enumerate}
By  Definition \ref{def.main}, solving \eqref{eq1.1} amounts to finding  $u\in L^p(E;m)$ such that 
\begin{equation}
\label{eq.transf1}
u+Rf^-(\cdot,u)=Rf^+(\cdot,u)\quad \text{in }L^p(E;m).
\end{equation}
If $u$ is a  solution to \eqref{eq.transf1}, then its extension 
$\hat u\in L^p(\hat E;\bar m)$, defined  by    $\hat u(x):= u(x),\, x\in E$,  $\hat u(x):=0,\, x\in \hat E\setminus E$, satisfies
\begin{equation}
\label{eq.transf2}
\hat u+\hat R\hat{f}^-(\cdot,\hat u)=\hat R\hat{f}^+(\cdot,\hat u)\quad \text{in }L^p(\hat E;\bar m).
\end{equation}
Here $\hat f$ is defined by $\hat f(x,y):= f(x,y),\, x\in E,\,y\in\mathbb R$, $\hat f(x,y):=0,\, x\in \hat E\setminus E,\,y\in\mathbb R$.
Conversely,  if  $\hat u\in L^p(\hat E;\bar m)$ solves \eqref{eq.transf2}, then  $u:= \hat u_{|E}$
satisfies \eqref{eq.transf1}. Clearly, $(\hat R_\alpha)$ is a compact  positivity-improving 
resolvent on $L^p(\hat E;\bar m)$,  $\hat f$ satisfies (F1)-(F3) on $L^p(\hat E;\bar m)$, 
and $\lambda_1(a_0(\hat f))=\lambda_1(a_0)$, $\lambda_1(a_\infty(\hat f))=\lambda_1(a_\infty)$.

It is therefore justified to state, when analyzing the problem \eqref{eq1.1},  that, without loss of generality, 
one may assume that  $E$ is a Lusin topological space, $\mathcal A=\mathcal B(E)$, and  that 
there exists a right Markov process 
$\mathbb X$  that represents  the semigroup $(T_t)$.
\end{remark}

\section{Deny's theorem for sub-Markovian semigroups}
\label{sec3}
In what follows, for a given Banach space $(F,\|\cdot\|_F)$, we denote by $\langle\cdot,\cdot\rangle_F$ 
the duality pairing between $F$
and $F^*$. Note that throughout the paper the space $L^p(E;m)$ is fixed. Therefore, we simply write 
$\langle\cdot,\cdot\rangle$ for $\langle\cdot,\cdot\rangle_{L^p(E;m)}$, and $\|\cdot\|$ for $\|\cdot\|_{L^p(E;m)}$.

\begin{definition}
We say that a positive function $u\in L^p(E;m)$ is supermedian if $T_tu\le u$
for any $t\ge 0$.
\end{definition}
A simple example of a supermedian function is $Rf$ for any positive $f\in L^p(E;m)$.

\begin{lemma}
\label{lm1}
Assume  that  $(T_t)$ is sub-Markovian. Then, for any  concave function $\phi:[0,\infty)\to [0,\infty)$ 
and any  supermedian function $u$, the function $\phi(u)$ is supermedian.  
\end{lemma}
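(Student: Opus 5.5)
The plan is a Jensen-type inequality for sub-Markovian positive operators, combined with monotonicity of $\phi$. First, $\phi(u)\in L^p(E;m)$ and $\phi(u)\ge0$: a concave function $\phi:[0,\infty)\to[0,\infty)$ satisfies $\phi(y)\le \phi(0)+cy$ for some $c\ge0$. This holds with $c$ the right derivative at any interior point, or at $0$ if finite. Since $m$ is finite, $\phi(u)\le \phi(0)+cu\in L^p(E;m)$.

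\textbf{Step 1: $\phi$ is non-decreasing.} A concave function bounded below by $0$ on $[0,\infty)$ cannot decrease anywhere. If $\phi(y_1)>\phi(y_2)$ for some $y_1<y_2$, concavity forces $\phi(y)\to-\infty$ as $y\to\infty$, which contradicts $\phi\ge0$. Hence, from $T_tu\le u$, we get $\phi(T_tu)\le\phi(u)$.

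\textbf{Step 2: $T_t\phi(u)\le\phi(T_tu)$.} Write $\phi=\inf_{i}\ell_i$ on $[0,\infty)$, where $\ell_i(y)=\alpha_i+\beta_iy$ ranges over a countable family of affine majorants. These are supporting lines at rational points, together with constants handling the endpoint $0$ if needed. Since $\phi$ is non-decreasing and non-negative, these lines can be chosen with $\beta_i\ge0$ and $\alpha_i\ge0$. Indeed, a supporting line at $y_0>0$ has slope $\beta\ge0$ by Step 1, and its intercept is $\alpha=\ell(0)\ge\phi(0)\ge0$.

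For each $i$, positivity and linearity of $T_t$ together with the sub-Markov property $T_t1\le 1$ give
\[
T_t\phi(u)\le T_t\ell_i(u)=\alpha_iT_t1+\beta_iT_tu\le \alpha_i+\beta_iT_tu=\ell_i(T_tu).
\]
The step $T_t1\le1$ uses $\alpha_i\ge0$. Taking the infimum over the countable family, $m$-a.e., yields $T_t\phi(u)\le\phi(T_tu)$.

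Here $T_t1\le 1$ follows from the sub-Markovian assumption. If that assumption is stated through the resolvent ($\alpha R_\alpha1\le1$), I would pass to the semigroup via the exponential (Post–Widder) formula $T_t=\lim_n (\frac nt R_{n/t})^n$ strongly, using positivity; alternatively the assumption in the lemma is directly on $(T_t)$. Note that $1\in L^p$ because $m$ is finite.

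\textbf{Conclusion and main obstacle.} Combining Steps 1 and 2 gives $T_t\phi(u)\le\phi(T_tu)\le\phi(u)$, so $\phi(u)$ is supermedian. The only delicate point is Step 2: obtaining the countable representation of $\phi$ by affine majorants with nonnegative coefficients, including behaviour at $y=0$, where $\phi$ may fail to be continuous. Concavity with $\phi\ge0$ only gives lower semicontinuity issues at $0$, namely $\phi(0)\le\lim_{y\to0^+}\phi(y)$. I would handle this by noting that the infimum of the supporting lines equals the upper semicontinuous regularization $\bar\phi$, which agrees with $\phi$ except possibly at $0$. Then I would treat the set $\{u=0\}$ separately. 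On that set, $\phi(u)=\phi(0)\le\bar\phi(0)$, and one can write $\phi=\bar\phi-(\bar\phi(0)-\phi(0))\mathbf 1_{\{0\}}$. An alternative is to apply the argument to $\phi_\varepsilon(y)=\phi(y+\varepsilon)$-type approximations and pass to the limit monotonically.
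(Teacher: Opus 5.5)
Your main argument is the paper's. You write $\phi$ as a countable infimum of affine majorants $\ell_i(y)=\alpha_i+\beta_i y$ with $\alpha_i,\beta_i\ge0$ and use $T_t\ell_i(u)=\alpha_iT_t1+\beta_iT_tu\le\ell_i(u)$. Splitting this into $T_t\phi(u)\le\phi(T_tu)$ plus monotonicity of $\phi$ is the same computation.

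The place where your proposal does not yet close is the endpoint issue you raise yourself. You are right that the supporting lines only recover $\bar\phi$, with $\bar\phi(0)=\phi(0+)$, and that $\phi(0)<\phi(0+)$ is possible, e.g.\ $\phi=\mathbf 1_{(0,\infty)}$. The paper's proof asserts $\phi=\inf_q l_q$ on all of $[0,\infty)$ and has the same blind spot. This is harmless there, since the lemma is only applied to $y/(1+y)$ in Theorem~\ref{lm.upseq}.

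Neither of your repairs works as stated. The decomposition $\phi=\bar\phi-(\bar\phi(0)-\phi(0))\mathbf 1_{\{0\}}$, combined with the affine argument, only gives $T_t\phi(u)\le\bar\phi(0)$ on $\{u=0\}$. The approximations $\phi(\cdot+\varepsilon)$ decrease to $\bar\phi$, not to $\phi$, so in the limit they prove that $\bar\phi(u)$ is supermedian, not $\phi(u)$.

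The missing observation is that $0\le T_tu\le u$ forces $T_tu=0$ a.e.\ on $\{u=0\}$. Now take $0\le g\in L^p(E;m)$. Since $g\wedge nu\uparrow g\mathbf 1_{\{u>0\}}$ with domination by $g$, we have $T_t(g\wedge nu)\to T_t(g\mathbf 1_{\{u>0\}})$ in $L^p$. Each term satisfies $0\le T_t(g\wedge nu)\le nT_tu$, so each vanishes on $\{u=0\}$, and therefore so does $T_t(g\mathbf 1_{\{u>0\}})$. The two sets are then handled separately:
\begin{itemize}
\item On $\{u=0\}$ you get $T_t\phi(u)=\phi(0)T_t\mathbf 1_{\{u=0\}}\le\phi(0)$.
\item On $\{u>0\}$ you get $T_t\phi(u)\le T_t\bar\phi(u)\le\bar\phi(u)=\phi(u)$.
\end{itemize}

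Alternatively, approximate from below rather than above. Set $\phi_\varepsilon(y):=\bar\phi(y)\wedge(\phi(0)+y/\varepsilon)$. It is concave, nonnegative and continuous at $0$, with $\phi_\varepsilon(0)=\phi(0)$ and $\phi_\varepsilon\uparrow\phi$ as $\varepsilon\downarrow0$. The continuous case applies to each $\phi_\varepsilon$, and dominated convergence in $L^p$ passes the inequality to the limit.

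There is also a minor slip. The bound $\phi(y)\le\phi(0)+cy$ fails in general, e.g.\ for $\phi(y)=\sqrt y$ or any $\phi$ that jumps at $0$. What you need, and what the tangent line at an interior point gives, is $\phi(y)\le a+cy$ for some $a,c\ge0$.
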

\begin{proof}
Since $\phi$ is concave,  there exist $a,b\in\mathbb R$ such that $\phi(y)\le ay+b,\, y\ge 0$.
Therefore, $\phi(u)\in L^p(E;m)$. Furthermore, we have $\phi=\inf_{q\in\mathbb Q} l_q$, 
where $(l_q)$ is a family of affine  functions. Since $\phi$ is nonnegative and concave on $[0,\infty)$
the affine majorants $l_q(y)=a_qy+b_q$ may be chosen with $a_q,b_q\ge0$.
Thus,
\[
T_t\phi(u)= T_t[\inf_{q\in\mathbb Q} l_q(u)]\le \inf_{q\in \mathbb Q}T_t[l_q(u)]\le \inf_{q\in \mathbb Q}l_q(u)=\phi(u).
\]
In the last inequality we used the facts that $u$ is supermedian and $(T_t)$ is sub-Markovian. This proves the assertion.
\end{proof}

\begin{theorem}
\label{lm.upseq}
Assume that $(T_t)$ is sub-Markovian. For any sequence $(v_n)$ of
supermedian functions there exists a subsequence $(v_{n_k})$ which
converges $m$-a.e. in $E$, possibly to $+\infty$.
\end{theorem}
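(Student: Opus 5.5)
Fix a sequence $(v_n)$ of supermedian functions. The first step reduces to the bounded case via Lemma~\ref{lm1}. For each fixed $N\ge1$, the function $\phi_N(y)=y\wedge N$ is concave and nonnegative on $[0,\infty)$, so $v_n\wedge N$ is again supermedian. Moreover, since $y\mapsto \arctan y$ is concave on $[0,\infty)$, $w_n:=\arctan v_n$ is a supermedian function bounded by $\pi/2$. Almost everywhere convergence of $(w_{n_k})$ gives a.e. convergence of $(v_{n_k})$ in $[0,\infty]$, because $\arctan$ is a homeomorphism of $[0,\infty]$ onto $[0,\pi/2]$. It therefore suffices to prove the theorem for a uniformly bounded sequence of supermedian functions, say $0\le w_n\le 1$. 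Since $m$ is finite, such a sequence is bounded in every $L^q$, $q<\infty$.

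For bounded supermedian $w$, I would use the regularization $\alpha R_\alpha w$ together with compactness of the resolvent. Sub-Markovianity and supermedianity give $\alpha R_\alpha w\le w$, with $\alpha R_\alpha w\nearrow \hat w$ as $\alpha\to\infty$. Because $(T_t)$ is strongly continuous, in fact $\alpha R_\alpha w\to w$ in $L^p$, so $\hat w=w$ a.e. For each fixed $\alpha$ in a countable set $\{\alpha_j\}\nearrow\infty$, compactness of $R_{\alpha_j}$ on $L^p$ and boundedness of $(w_n)$ in $L^p$ allow a diagonal extraction. This yields a subsequence $(w_{n_k})$ such that, for every $j$, $\alpha_jR_{\alpha_j}w_{n_k}\to g_j$ in $L^p$ and a.e. As $\alpha_jR_{\alpha_j}w\le \alpha_{j+1}R_{\alpha_{j+1}}w$, the limits satisfy $g_j\le g_{j+1}\le 1$; set $g:=\sup_j g_j$. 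Then $\liminf_k w_{n_k}\ge g_j$ a.e. for every $j$, hence $\liminf_k w_{n_k}\ge g$ a.e.

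The main obstacle is the matching upper bound $\limsup_k w_{n_k}\le g$ a.e. This cannot come from the resolvent approximation alone, since $\alpha R_\alpha w\to w$ need not be uniform in $n$. Here I would use integration against a positive dual object, in the spirit of Deny's theorem via an excessive measure. By Proposition~\ref{prop.tool}(v) applied to $L$, there is a dual principal eigenfunction $\varphi\gg0$ with $T_t^*\varphi=e^{-\lambda_1 t}\varphi$, where $\lambda_1:=\lambda_1(-L)>0$; note $\varphi\in L^{p'}$ and $w_n\in L^\infty\subset L^p$. Set $\mu:=\varphi\,m$. This is an excessive measure up to the factor $e^{-\lambda_1 t}$, so
\[
\int (w-\alpha R_\alpha w)\,d\mu=\Bigl(1-\frac{\alpha}{\alpha+\lambda_1}\Bigr)\int w\,d\mu=\frac{\lambda_1}{\alpha+\lambda_1}\int w\,d\mu\le \frac{\lambda_1}{\alpha}\,\|\varphi\|_{L^1}
\]
for $0\le w\le 1$. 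This bound is uniform in $n$. Using Fatou's lemma and $w_{n_k}-\alpha_jR_{\alpha_j}w_{n_k}\ge0$, I get
\[
\int(\limsup_k w_{n_k}-g_j)\,d\mu\le \liminf_k\int (w_{n_k}-\alpha_jR_{\alpha_j}w_{n_k})\,d\mu\le \lambda_1\|\varphi\|_{L^1}/\alpha_j .
\]
Letting $j\to\infty$ gives $\int(\limsup_k w_{n_k}-g)\,d\mu\le0$. Since $\limsup_k w_{n_k}\ge g$ and $\varphi\gg0$, it follows that $\limsup_k w_{n_k}=g$ $m$-a.e.

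Combined with the liminf bound, $w_{n_k}\to g$ a.e., and the reduction in the first step finishes the proof. The delicate points to verify are that Proposition~\ref{prop.tool}(v) applies to $L$ as given, so that $\varphi$ is genuinely positive a.e. (which is exactly what positivity improvement supplies), and the application of Fatou's lemma to the reversed limsup: the integrands are bounded by $1$ and $\mu$ is finite, so reverse Fatou is legitimate.
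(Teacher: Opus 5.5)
Your reduction to $0\le w_n\le 1$, the diagonal extraction, the lower bound $\liminf_k w_{n_k}\ge g$ a.e., and the uniform estimate $\int(w-\alpha R_\alpha w)\,d\mu\le\lambda_1\|\varphi\|_{L^1}/\alpha$ are all correct. The argument fails at the upper bound, where you use
\[
\int\limsup_{k}\bigl(w_{n_k}-\alpha_jR_{\alpha_j}w_{n_k}\bigr)\,d\mu\le\liminf_{k}\int\bigl(w_{n_k}-\alpha_jR_{\alpha_j}w_{n_k}\bigr)\,d\mu .
\]
This is not Fatou's lemma in either form. Fatou gives it with $\liminf_k$ inside the integral, which only yields $\liminf_k w_{n_k}=g$ a.e. Reverse Fatou gives $\limsup_k\int f_k\le\int\limsup_k f_k$, which is the opposite direction. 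For $0\le f_k\le1$ the inequality is simply false (take typewriter indicators). The conclusion is false as well. Your argument uses nothing about $(w_{n_k})$ beyond a.e.\ convergence of every $\alpha_jR_{\alpha_j}w_{n_k}$, and this does not force a.e.\ convergence of $w_{n_k}$. For a counterexample, take the Dirichlet Laplacian on a ball in $\mathbb R^3$ and let $w_n$ be capacitary potentials of small balls $B(y_n,r_n)$ with $r_n\to0$, whose centres sweep through finer and finer coverings of the domain. The bound $w_n\le r_n/|x-y_n|$ off the ball gives $w_n\to0$ in $L^1$. The bound $r_\alpha(x,y)\le c|x-y|^{-1}$ then gives $\alpha R_\alpha w_n\to0$ uniformly for each $\alpha$, so $g=0$ along the full sequence. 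Yet $\limsup_n w_n=1$ everywhere, so your argument would "prove" a.e.\ convergence of a sequence that does not converge.

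What is missing is that only integrals can be controlled, and a further extraction is then unavoidable. Your estimate does supply this. Since $\int\alpha_jR_{\alpha_j}w_{n_k}\,d\mu\to\int g_j\,d\mu$, you get $\limsup_k\int w_{n_k}\,d\mu\le\int g\,d\mu+\lambda_1\|\varphi\|_{L^1}/\alpha_j$ for all $j$. Together with Fatou this gives $\int w_{n_k}\,d\mu\to\int g\,d\mu$. As $\liminf_k w_{n_k}\ge g$ and $0\le(w_{n_k}-g)^-\le1$, dominated convergence gives $\int(w_{n_k}-g)^-\,d\mu\to0$. Hence $w_{n_k}\to g$ in $L^1(\mu)$, and since $\varphi\gg0$ a further subsequence converges $m$-a.e.

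This is exactly the structure of the paper's proof, which obtains the convergence of integrals differently. It takes a weak$^*$ limit $w$ in $\sigma(L^\infty,L^1)$ and uses compactness of $R$ to identify $\lim R_\alpha w_n=R_\alpha w$, which gives $\liminf_n w_n\ge w$ a.e. It then invokes the Dellacherie--Meyer result, which upgrades weak convergence plus this $\liminf$ bound to $L^1$ convergence, and extracts an a.e.\ convergent subsequence. Your dual-eigenfunction bound is a legitimate substitute for the weak$^*$ step, but the final extraction cannot be skipped.
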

\begin{proof}
Set $w_n:=v_n/(1+v_n)$.
By Lemma \ref{lm1}, $(w_n)$ is a sequence of supermedian functions
satisfying $0\le w_n\le1$.  We first apply the argument below to the
sequence $(w_n)$.
After passing to a subsequence,   $(w_n)$ converges   weakly$^\star$  
in $\sigma(L^\infty,L^1)$. Let us denote the limit by $w$. By the assumptions made, 
the resolvent of $L$ is compact; hence, after passing to a further  
subsequence, $(Rw_n)$  converges in $L^p(E;m)$ to a function $\varrho\in L^p(E;m)$. 
We have for any $\eta\in L^\infty(E;m)$,
\[
\langle \varrho,\eta\rangle=\lim_{n\to \infty}\langle Rw_n,\eta\rangle=\lim_{n\to \infty}\langle w_n,R^*\eta\rangle
=\langle w,R^*\eta\rangle=
\langle Rw,\eta\rangle.
\] 
Thus, $(Rw_n)$ converges to $Rw$ in $L^p(E;m)$,  
and by the resolvent identity $R_\alpha w_n\to R_\alpha w$ in $L^p(E;m)$ for any $\alpha\ge 0$.
By the diagonal method one can choose a further subsequence such that $R_\alpha w_n\to R_\alpha w$ a.e.
for any $\alpha\ge 0,\alpha\in\mathbb Q$. Consequently,
\[
\liminf_{n\to \infty} w_n\ge \liminf_{n\to \infty} \alpha R_\alpha w_n
= \alpha R_\alpha w\quad \text{a.e. for }\alpha\ge 0,\alpha\in\mathbb Q.
\]
Letting $\alpha\to \infty$, we   conclude that
\[
\liminf_{n\to \infty} w_n\ge w,\quad \text{a.e.}
\]
By \cite[Theorem 26, page 28]{DellacherieMeyer}, after passing to a subsequence,  $w_n\to w$ a.e. 
Finally, since the map $t\mapsto t/(1+t)$ is a homeomorphism from
$[0,\infty]$ onto $[0,1]$, we conclude that
\[
v_{n_k}\longrightarrow \frac{w}{1-w}
\quad m\text{-a.e.},
\]
with the convention that $w/(1-w)=+\infty$ on $\{w=1\}$.
\end{proof}

\section{Kato's inequality for sub-Markovian semigroups}
\label{sec4}

We recall a few standard notions from probability theory. 
A stochastic process $(X_t)$  on a probability space  $(\Omega,\mathcal H, \mathbb P)$, 
with state space $Y$, is called c\`adl\`ag if, for $\mathbb P$-a.e. $\omega\in \Omega$, the path
$t\mapsto X_t(\omega)$ is right-continuous on $[0,\infty)$ and admits left limits in $Y$
at every $t>0$. We write
\[
X_{t-}(\omega):=\lim_{s\uparrow t}X_s(\omega),
\qquad t>0.
\]
If $Z_t=u(X_t)$  is c\`adl\`ag, we use the notation
\[
u(X)_{t-}:=Z_{t-}.
\]
In general, $u(X)_{t-}$ need not coincide with $u(X_{t-})$. The right Markov processes considered 
below need not themselves have c\`adl\`ag paths; nevertheless, Lemma \ref{lm.basic}(2) shows that $u(X)$
is c\`adl\`ag whenever $u$ is a finite $\mathcal U$-excessive function.

\begin{definition} 
Fix a probability space $(\Omega,\mathcal H, \mathbb P)$ with a filtration $(\mathcal H_t)$.
\begin{enumerate}
\item[(i)] A c\`adl\`ag $(\mathcal H_t)$-adapted stochastic process $(M_t)$
is called  a martingale if $\mathbb E |M_t|<\infty$ for each  $t\ge 0$, and $\int_A M_t\,d\mathbb P=\int_A M_s\,d\mathbb P$
for any $A\in\mathcal H_s$ and $s\le t$.
\item[(ii)] A random variable $\tau:\Omega\to [0,\infty]$ is called a stopping time if $\{\tau\le t\}\in \mathcal H_t$ for every $t\ge 0$.
\item[(iii)] A c\`adl\`ag $(\mathcal H_t)$-adapted stochastic process $(M_t)$
is called  a {\em local martingale} if there exists an increasing sequence $(\tau_k)$ of stopping times such that
$\tau_k\nearrow \infty$ and for each $k\ge 1$, the  process $M^k:=M_{\cdot\wedge\tau_k}$ is a martingale. 
The sequence $(\tau_k)$ is called a {\em localizing sequence}.
\item[(iv)] A martingale  $(M_t)$ with respect to  $(\mathcal H_t)$ is called {\em uniformly integrable}
if the family of random variables $(M_t)_{t\ge 0}$ is uniformly integrable with respect to $\mathbb P$.
\end{enumerate}
\end{definition}

Note that all of the above notions depend on a given filtration and probability measure. 
If the underlying probability measure or filtration is not clear from the context, we will state it explicitly.

\begin{lemma}
\label{lm.basic}
Let $Y$ be a  Lusin topological space. Suppose that $(S_t)$ is a sub-Markovian $C_0$-semigroup on  
$L^p(Y,\mathfrak m)$ with  generator $(A,\mathscr D(A))$, where  $\mathfrak m$ is a bounded measure on $\mathcal B(Y)$.  
Furthermore, suppose that the resolvent $(G_\alpha)$  of $(A,\mathscr D(A))$  is compact and positivity improving.
Let $\mathbb X$ be a right Markov process related to  $(G_\alpha)$ (see Remark \ref{rem.main}) 
with resolvent kernel $\mathcal U$.  
\begin{enumerate}
\item[(1)]  If $u$ is a supermedian function with respect to $(S_t)$, then there exists an $\mathfrak m$-version $\tilde u$
of $u$ such that $\tilde u$ is $\mathcal U$-excessive.
\item[(2)] If $u$ is a $\mathcal U$-excessive function, then for any $x\in \{u<\infty\}$ there exist
a local martingale $M^x$ and an increasing c\`adl\`ag process $A^x$, with $M^x_0=A^x_0=0$, such that
$\mathbb E_x A^x_\infty<\infty$, and 
\[
u(X_t)=u(X_0)-A^x_t+M^x_t,\quad t\ge 0,\, \mathbb P_x\text{-a.s.}
\]
Consequently, for any $x\in \{u<\infty\}$ and localizing sequence $(\tau_k)$ for a local martingale $M^x$, we have
\[
u(x)=\lim_{k\to \infty} (\mathbb E_xu(X_{\tau_k})+\mathbb E_xA^x_{\tau_k}).
\]
\item[(3)] Let   $g\in p\mathcal B(Y)$ and   $u:= Ug$ (here $U:=U_0$).
Then $u$ is  $\mathcal U$-excessive,  and  for any $x\in\{u<\infty\}$
there exists a uniformly integrable martingale $M^x$, with $M^x_0=0$, such that  
\begin{equation}
\label{eq.marun1}
u(X_t)=u(x)-\int_0^tg(X_s)\,ds+M^x_t,\quad t\ge 0\,\mathbb P_x\text{-a.s.}
\end{equation}
Furthermore, for any increasing sequence $(\tau_k)$ of stopping times such that $\tau_k\nearrow \infty$,
\[
\mathbb E_xu(X_{\tau_k})\searrow 0,\quad k\to \infty.
\]
\item[(4)] If $u$ in (2) is finite $\mathfrak m$-a.e. then there exists a positive  additive functional 
$(A_t)$, with exceptional set $N:=\{u=\infty\}$, such that
$A_t=A^x_t,\, t\ge 0\, \mathbb P_x$-a.s. for any  $x\in \{u<\infty\}$.
\item[(5)]  If $u$ is $\mathcal U$-excessive, then for any increasing sequence $(\tau_k)$ of exact terminal  times the function
\[
v(x):=\sup_{k\ge 0} \mathbb E_x u(X_{\tau_k}),\quad x\in Y
\]
is a $\mathcal U$-supermedian function.
\item[(6)] If $u$ is an  $\mathcal U$-excessive function, then the set $B:=\{u<\infty\}$ is absorbing, i.e.
for any $x\in B$ we have $\mathbb P_x(X_t\in B,\,\text{ for every } t\ge 0)=1$.
\end{enumerate}
\end{lemma}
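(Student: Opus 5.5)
We would prove the six assertions essentially in the order stated, transferring everything to the right process $\mathbb X$ via the identity $U_\alpha f=G_\alpha f$ $\mathfrak m$-a.e. (Definition \ref{def.rep}), and then invoking standard facts of the general theory of right processes (Sharpe, Blumenthal--Getoor, Beznea--Boboc).

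For (1), let $u$ be supermedian for $(S_t)$. We would first show that $\beta G_\beta u\le u$ $\mathfrak m$-a.e. for all $\beta>0$; this follows by integrating $S_tu\le u$ against $\beta e^{-\beta t}$. We would then set
\[
\tilde u:=\sup_{\beta\in\mathbb Q_+}\beta U_\beta \bar u ,
\]
where $\bar u$ is a fixed Borel version of $u$. Since $U_\beta\bar u=G_\beta u$ a.e., the resolvent identity shows that $\beta\mapsto \beta U_\beta\bar u$ is increasing after modification on an $\mathfrak m$-null set. Strong continuity of $(S_t)$ gives $\beta G_\beta u\to u$ in $L^p$, so $\tilde u=u$ a.e. The fact that a monotone limit of the form $\sup_\beta \beta U_\beta v$ with $\beta U_\beta v\le v$ a.e. is $\mathcal U$-excessive is standard. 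Here we must handle the null set; the fix is to replace $\bar u$ by the $\mathcal U$-excessive regularization $\widehat{v}$ of the $\mathcal U$-supermedian function $v:=\inf_{\beta}$-regularized version, using that $\mathfrak m$-null sets are not charged by $U_\beta$ (Remark \ref{rem.main}(ii)).

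For (2), on $\{u<\infty\}$ the process $u(X_t)$ is a right-continuous positive $\mathbb P_x$-supermartingale; this is where the right process property (ii) is used. It has left limits, and the Doob--Meyer decomposition then gives $u(X_t)=u(x)-A^x_t+M^x_t$ with $A^x$ predictable and increasing. Because the supermartingale is positive, it is of class (D) on compacts only after localization. The bound $\mathbb E_xA^x_\infty\le u(x)$ follows from Fatou. The limit formula then follows by stopping at $\tau_k$, taking expectations, and using monotone convergence for $A^x$.

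For (3), the identity $u=Ug$ gives $\mathbb E_x[\int_t^\infty g(X_s)ds\mid\mathcal F_t]=u(X_t)$ by the Markov property. Hence $M^x_t:=\mathbb E_x[\int_0^\infty g(X_s)ds\mid\mathcal F_t]-u(x)$ is a uniformly integrable martingale, which yields \eqref{eq.marun1}. Moreover,
\[
\mathbb E_xu(X_{\tau_k})=\mathbb E_x\int_{\tau_k}^\infty g(X_s)ds\searrow0
\]
by dominated convergence.

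For (4), we would use the classical construction of the additive functional associated with an excessive function (Revuz/Getoor--Sharpe). The key point is that $A^x$ can be chosen independent of $x$ on the absorbing set $\{u<\infty\}$, by (6) and the fact that $u(X)$ is a supermartingale simultaneously for all $\mathbb P_x$. The exceptional set $\{u=\infty\}$ is $\mathfrak m$-polar precisely because $u$ is finite a.e. and excessive.

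For (5), we would write $v_k(x):=\mathbb E_xu(X_{\tau_k})=P_{\tau_k}u(x)$. For an exact terminal time, $P_{\tau}u$ is excessive; this is the strong Markov property combined with $\tau=t+\tau\circ\theta_t$ on $\{\tau>t\}$. The sequence $v_k$ is decreasing, since the $\tau_k$ increase and $u(X)$ is a supermartingale. (If instead it increases, the supremum of excessive functions is excessive; if it decreases, the pointwise limit is only supermedian.) In either case the defining inequality $\beta U_\beta v\le v$ passes to the limit, which is exactly the claim.

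For (6), we would apply the supermartingale property: $u(X)$ is a right-continuous positive supermartingale under $\mathbb P_x$ with $u(x)<\infty$. Such a process cannot hit $+\infty$, since otherwise $\mathbb E_x u(X_{T_n})$ would be unbounded along $T_n:=\inf\{t:u(X_t)>n\}$ while bounded by $u(x)$.

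The main obstacle is (1), together with the $x$-independence in (4). Both require care with null sets, because $\mathfrak m$-a.e. statements must be upgraded to everywhere statements for $\mathcal U$. The tool we would use first is the fact that $\mathfrak m$ is $\mathcal U$-excessive up to a constant, so that $\mathfrak m$-null sets coincide with the sets not charged by $U_\alpha(x,\cdot)$ for $\mathfrak m$-a.e. $x$; we would then restrict to an absorbing Borel set of full measure.
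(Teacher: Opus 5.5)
Your arguments for (2)--(6) follow essentially the paper's route: the supermartingale property of $u(X)$ for a right process plus Doob--Meyer, the Markov property for potentials $Ug$, Sharpe's construction of the additive functional on the absorbing set $\{u<\infty\}$, and excessivity of $P_\tau u$ for exact terminal times. For (6), your maximal-inequality argument is a correct replacement for the paper's citation.

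The gap is in (1), which is exactly where the paper uses the spectral hypotheses. The tool you name, that $\mathfrak m$ is $\mathcal U$-excessive up to a constant, would mean $\alpha G_\alpha^*\mathbf 1\le C$ for all $\alpha>0$. That is an $L^1$-type bound on the resolvent, and nothing in the hypotheses provides it ($S_t\mathbf 1\le\mathbf 1$ is an $L^\infty$ statement). The paper gets an excessive reference measure differently. For a dual principal eigenfunction $\eta_1^*\gg0$ (Proposition~\ref{prop.tool}(v), which is where compactness and positivity improvement enter) one has $\alpha G_\alpha^*\eta_1^*=\frac{\alpha}{\alpha+\lambda_1(-A)}\eta_1^*\le\eta_1^*$. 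So $\eta_1^*\mathfrak m$ is a $\mathcal U$-excessive measure equivalent to $\mathfrak m$, and \cite[Proposition 2.4]{BCR} then yields the excessive version.

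Independently of this, your sketch never produces a function that is $\mathcal U$-supermedian at \emph{every} point. The ``standard'' fact you quote requires $\beta U_\beta v\le v$ everywhere. Remark~\ref{rem.main}(ii) only says that a fixed null set is not charged by $U_\beta(x,\cdot)$ for $\mathfrak m$-a.e.\ $x$. And ``$v:=\inf_\beta$-regularized version'' does not define anything.

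Your closing idea of an absorbing set of full measure can in fact repair (1) without any excessive measure. The set must be built so that its points do not see the exceptional set. Let $N$ be a Borel $\mathfrak m$-null set containing $\bigcup_{\beta\in\mathbb Q_+}\{\beta U_\beta\bar u>\bar u\}$, and put $F:=\{U_1\mathbf 1_N=0\}\setminus N$.
\begin{itemize}
\item By Remark~\ref{rem.main}(ii), $\mathfrak m(Y\setminus F)=0$.
\item The zero set of the $1$-excessive function $U_1\mathbf 1_N$ is absorbing, by right-continuity of $t\mapsto U_1\mathbf 1_N(X_t)$.
\item $U_1(x,N)=0$ forces $U_\beta(x,N)=0$. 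Hence $U_\beta(x,Y\setminus F)=0$ for $x\in F$ and all $\beta>0$.
\end{itemize}
Then set $w:=\bar u$ on $F$ and $w:=+\infty$ on $Y\setminus F$. This $w$ is $\mathcal U$-supermedian everywhere; pass from rational to real $\beta$ by continuity of $\beta\mapsto\beta U_\beta\bar u(x)$. Its excessive regularization $\sup_\beta\beta U_\beta w$ agrees $\mathfrak m$-a.e.\ with $\lim_{\beta\to\infty}\beta G_\beta u=u$.

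With this step written out, your route to (1) is a legitimate alternative that avoids both the dual eigenfunction and the external citation. As submitted, however, the decisive step is missing and the justification offered for it is incorrect.
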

\begin{proof}
Let $(\lambda_1(-A),\eta_1^*)$ be a dual principal eigenpair related to $(A,\mathscr D(A))$ (see Proposition \ref{prop.tool}(v)). 
Let $\hat{\mathfrak m}:= \eta_1^*\mathfrak m$. Observe that
\[
\int_Y \alpha U_\alpha f \,d\hat{\mathfrak m}\le  \int_Y f\, d \hat{\mathfrak m}
\]
for any  $f\in p\mathcal B(Y)$. Thus 
$\hat{\mathfrak m}$ is a $\mathcal U$-excessive measure that is equivalent to $\mathfrak m$.
Consequently, we may apply \cite[Proposition 2.4]{BCR}, which proves  (1). 

By the very definition of a right Markov process and  \cite[Theorem III.5.7]{BG} we have that  for 
$x\in \{u<\infty\}$, $(u(X_t))_{t\ge 0}$
is a c\`adl\`ag $\mathbb P_x$ supermartingale,  $\sup_{t\ge 0} \mathbb E_xu(X_t)\le u(x)<\infty$ and 
$\mathbb P_x(u(X_t)<\infty,\, t\ge 0)=1$. Thus (2) is a consequence of the Doob-Meyer decomposition theorem. 
As to (3), first note that $u$ is $\mathcal U$-excessive by \cite[Proposition II.2.2(e)]{BG}. Thus, by (2) and the Markov property 
for any $x\in \{u<\infty\}$,
\begin{equation}
\label{eq.marun12}
u(X_t)=\mathbb E_x\left(\int_t^\infty g(X_s)\,ds\Big|\mathcal F_t\right)
=-\int_0^tg(X_s)\,ds+u(x)+M^x_t,\quad t\ge 0,\,\mathbb P_x\text{-a.s.,}
\end{equation}
where $M^x_t:=\mathbb E_x\left(\int_0^\infty g(X_s)\,ds\Big|\mathcal F_t\right)-u(x)$. 
Clearly $M^x$ is a uniformly integrable martingale. In particular $\mathbb E_xM^x_{\tau_k}=0,\, k\ge 1$. 
Consequently, by \eqref{eq.marun12},
\[
u(x)=\mathbb E_xu(X_{\tau_k})+\mathbb E_x\int_0^{\tau_k}g(X_s)\,ds.
\]
Letting $k\to \infty$, we deduce that $\mathbb E_xu(X_{\tau_k})\searrow 0$, 
since $u(x)=\mathbb E_x\int_0^{\infty}g(X_s)\,ds$.  This proves (3). 
Assertion (4) follows from \cite[Theorem 51.7]{Sharpe}, after a standard   argument restricting 
$\mathbb X$ to $\{u<\infty\}$; see \cite[Theorem 12.23]{Sharpe}. 
Finally,  (5) follows from \cite[Exercise (2.18)]{BG} and directly from the definition of a supermedian function.
The property (6) can be found in \cite[Lemma 5.2]{FG}.
\end{proof}

\begin{proposition}
\label{lm2}
Assume that $(T_t)$ is sub-Markovian.   Suppose that  $v,w$ are  supermedian functions.
Set $u:=Rf-w$  for some $f\in L^p(E;m)$.
Then
\[
(u-v)^+\le R(\mathbf1_{\{u>v\}}f).
\]
\end{proposition}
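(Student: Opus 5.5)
We pass to the probabilistic realization and derive the inequality from a Tanaka-type argument along the paths of the process. By Remark~\ref{rem.main} we may assume that $E$ is a Lusin topological space and that a right Markov process $\mathbb X$ with resolvent kernel $\mathcal U=(U_\alpha)$ represents $(T_t)$. We write $f=f^+-f^-$.

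\emph{Step 1: fine versions.} By Lemma~\ref{lm.basic}(1) we replace $v$ and $w$ by $\mathcal U$-excessive versions, and we take $U f^{\pm}$ as the versions of $Rf^{\pm}$; these are $\mathcal U$-excessive by Lemma~\ref{lm.basic}(3). Thus $u=Uf^+-Uf^--w$ is defined pointwise off the set
\[
N:=\{Uf^++Uf^-+v+w=\infty\},
\]
which is $m$-null. Since $U$ respects $m$-a.e.\ equality (Remark~\ref{rem.main}(ii)), the function $U(\mathbf 1_{\{u>v\}}f)$ is a version of $R(\mathbf 1_{\{u>v\}}f)$. Hence it suffices to prove the pointwise inequality
\[
(u-v)^+(x)\le U(\mathbf 1_{\{u>v\}}f)(x)\qquad\text{for every } x\notin N.
\]

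\emph{Step 2: semimartingale decomposition.} Fix $x\notin N$. The function $v+w$ is excessive and finite at $x$, so Lemma~\ref{lm.basic}(2) gives, $\mathbb P_x$-a.s.,
\[
(v+w)(X_t)=(v+w)(x)-A_t+M_t,
\]
with $A$ increasing. Lemma~\ref{lm.basic}(3) gives \eqref{eq.marun1} for $Uf^+$ and for $Uf^-$. Set $Y_t:=(u-v)(X_t)$; by Lemma~\ref{lm.basic}(6) the path stays in $\{x\notin N\}$, so $Y$ is finite. Combining the three decompositions,
\[
Y_t=Y_0-\int_0^t f(X_s)\,ds+A_t+\widetilde M_t ,
\]
where $\widetilde M$ is a local martingale. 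In particular $Y$ is a c\`adl\`ag semimartingale, because each excessive function composed with $X$ is c\`adl\`ag.

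\emph{Step 3: Tanaka–Meyer formula.} We apply the Tanaka–Meyer formula to the convex function $y\mapsto y^+$. The local-time term and the jump correction $\sum_{s\le t}\bigl(Y_s^+-Y_{s-}^+-\mathbf 1_{\{Y_{s-}>0\}}\Delta Y_s\bigr)$ are both nonnegative, and $\int_0^t\mathbf 1_{\{Y_{s-}>0\}}\,dA_s\ge0$. Moreover, $Y_{s-}=Y_s$ for Lebesgue-a.e.\ $s$, so in the $ds$-integral the left limit may be replaced by $Y_s$. These facts yield
\[
Y_0^+\le Y_t^++\int_0^t \mathbf 1_{\{u>v\}}(X_s)f(X_s)\,ds-\int_0^t\mathbf 1_{\{Y_{s-}>0\}}\,d\widetilde M_s .
\]
We stop at a localizing sequence $\tau_k\nearrow\infty$ for the stochastic integral (chosen also so that the stopped processes are integrable) and take $\mathbb E_x$. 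This gives
\[
(u-v)^+(x)\le \mathbb E_x(u-v)^+(X_{\tau_k})+\mathbb E_x\int_0^{\tau_k}\mathbf 1_{\{u>v\}}f(X_s)\,ds .
\]

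\emph{Step 4: letting $k\to\infty$.} Since $v,w\ge0$ and $Uf^-\ge0$, we have $(u-v)^+\le Uf^+$, and Lemma~\ref{lm.basic}(3) gives $\mathbb E_xUf^+(X_{\tau_k})\searrow0$. The integral term converges to $U(\mathbf 1_{\{u>v\}}f)(x)$ by dominated convergence, with dominating variable $\int_0^\infty|f|(X_s)\,ds$, whose $\mathbb E_x$-expectation is $U|f|(x)<\infty$ because $x\notin N$. This proves the claim.

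\emph{Main obstacle.} The delicate step is Step 3. One must justify Tanaka–Meyer for a c\`adl\`ag semimartingale built from merely excessive functions, whose jumps need not come from jumps of $X$. One must also check that replacing $\mathbf 1_{\{Y_{s-}>0\}}$ by $\mathbf 1_{\{u>v\}}(X_s)$ changes only a $ds$-null set of times, and that the version-dependence of the indicator is harmless, which is where Remark~\ref{rem.main}(ii) enters. The localization must make the stochastic integral a true martingale and keep all other terms integrable.
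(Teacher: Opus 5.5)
Your proof is correct and follows essentially the same route as the paper. Both arguments pass to the right Markov process via Remark~\ref{rem.main} and decompose $(u-v)(X_t)$ using Lemma~\ref{lm.basic}(1)--(3). Both then apply the It\^o--Meyer (Tanaka) formula to $y\mapsto y^+$, drop the nonnegative terms, localize, bound $(u-v)^+\le Uf^+$, and let $k\to\infty$ using Lemma~\ref{lm.basic}(3).

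The differences are only cosmetic. You treat $v+w$ as a single excessive function instead of decomposing $v$ and $w$ separately. You also make explicit details the paper leaves implicit: the absorbing set via Lemma~\ref{lm.basic}(6), the countable jump times in the $ds$-integral, and the dominated-convergence limit.
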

\begin{proof}
By Remark \ref{rem.main} we may assume that $E$ is a Lusin topological space, $\mathcal A=\mathcal B(E)$, 
and  there exists a right Markov process $\mathbb X$  that represents 
the semigroup $(T_t)$ (see Definition \ref{def.rep}). We  fix a version of $f$.   
Thus, by Lemma \ref{lm.basic}(1)  there exist $m$-versions $\tilde v, \tilde w$ of $v, w$ such that 
$\tilde v, \tilde w$ are $\mathcal U$-excessive, and furthermore,  for $m$-a.e. $x\in E$ there exist  increasing 
c\`adl\`ag processes  $(A^x_t)$,  $(B^x_t)$, and  local martingales  
$(N^x_t)$, $(M^x_t)$, with $A^x_0=B^x_0=M^x_0=N^x_0=0$, such that
\[
\tilde u(X_t)=\tilde u(X_0)-\int_0^t f(X_s)\,ds+B^x_t+M^x_t,\quad t\ge 0,\,\mathbb P_x\text{-a.s.},
\]
for an $m$-version $\tilde u$ of $u$ (to be precise $\tilde u:= Uf-\tilde w$),
and 
\[
\tilde v(X_t)=\tilde v(X_0)+N^x_t-A^x_t,\quad t\ge 0,\,\mathbb P_x\text{-a.s.}
\]
Now applying the It\^o--Meyer formula to $\tilde u(X_t)-\tilde v(X_t)$ and the convex function $x\mapsto x^+$
(see \cite[Theorem IV.70]{Protter}) we obtain that for $m$-a.e. $x\in E$,
\[
\begin{split}
(\tilde u-\tilde v)^+(X_t)&=(\tilde u-\tilde v)^+(X_0)-\int_0^t \mathbf1_{\{\tilde u(X_s)>\tilde v(X_s)\}}f(X_s)\,ds 
\\&\quad+\int_0^t \mathbf1_{\{\tilde u(X)_{s-}>\tilde v(X)_{s-}\}}\,dB^x_s
+\int_0^t \mathbf1_{\{\tilde u(X)_{s-}>\tilde v(X)_{s-}\}}\,dA^x_s
\\&\quad+\int_0^t \mathbf1_{\{\tilde u(X)_{s-}>\tilde v(X)_{s-}\}}\,d(M^x-N^x)_s+L^x_t,\quad t\ge 0
\end{split}
\]
$\mathbb P_x\text{-a.s.}$ for some increasing c\`adl\`ag process $L^x$. Let $(\tau_k)$ 
be a localizing sequence for $N^x$ and $M^x$. Replacing $t$ by $\tau_k$ 
and taking the expectation  $\mathbb E_x$ on both sides of the above equality yields 
\[
\begin{split}
(\tilde u-\tilde v)^+(x)&=\mathbb E_x (\tilde u-\tilde v)^+(X_0)\\&\le \mathbb E_x(\tilde u-\tilde v)^+(X_{\tau_k})+
\mathbb E_x\int_0^{\tau_k} \mathbf1_{\{\tilde u(X_s)>\tilde v(X_s)\}}f(X_s)\,ds\\&\le
\mathbb E_xUf^+(X_{\tau_k})+
\mathbb E_x\int_0^{\tau_k} \mathbf1_{\{\tilde u(X_s)>\tilde v(X_s)\}}f(X_s)\,ds.
\end{split}
\]
The result follows by letting $k\to \infty$ and applying Lemma \ref{lm.basic}(3).
\end{proof}

\begin{proposition}
\label{prop.if1}
Assume that $(T_t)$ is sub-Markovian. Let $g\in L^p(E;m)$ and suppose that $u:=Rg$ is nonnegative.
If $\varphi\in C^1_b([0,\infty))$ is convex, then there exists a
supermedian function $v$ such that
\[
\varphi(u)
=
\varphi(0)+R(\varphi'(u)g)-v.
\]
\end{proposition}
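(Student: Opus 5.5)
We follow the same probabilistic route as in the proof of Proposition \ref{lm2}. By Remark \ref{rem.main} we may assume that $E$ is a Lusin topological space with $\mathcal A=\mathcal B(E)$, and that a right Markov process $\mathbb X$ represents $(T_t)$. Fix a version of $g$ and set $\tilde u:=Ug^+-Ug^-$ on the set where both terms are finite; this set has full $m$-measure since $R|g|\in L^p(E;m)$. By Lemma \ref{lm.basic}(3), for $m$-a.e. $x$ we have, $\mathbb P_x$-a.s.,
\[
\tilde u(X_t)=\tilde u(x)-\int_0^t g(X_s)\,ds+M^x_t,
\]
where $M^x$ is a uniformly integrable martingale (the difference of those for $g^+$ and $g^-$). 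Since $u\ge0$ $m$-a.e. and $Ug^\pm$ are excessive, one must check that $\tilde u(X_t)\ge0$ for all $t$, $\mathbb P_x$-a.s. We will do this by modifying $\tilde u$ on an $m$-polar set, or by noting that $\{Ug^+\ge Ug^-\}$ is finely closed and of full measure, hence essentially absorbing. This is a technical step.

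Next we apply the It\^o--Meyer formula \cite[Theorem IV.70]{Protter} to the semimartingale $\tilde u(X_t)$ and the convex $C^1$ function $\varphi$. This gives
\[
\varphi(\tilde u(X_t))=\varphi(\tilde u(x))-\int_0^t\varphi'(\tilde u(X_s))g(X_s)\,ds+\int_0^t\varphi'(\tilde u(X)_{s-})\,dM^x_s+K^x_t,
\]
where $K^x$ is increasing, collecting the local time and jump terms; these are nonnegative by convexity. The stochastic integral is a martingale because $\varphi'$ is bounded. Define
\[
\tilde v(x):=\varphi(0)+U(\varphi'(\tilde u)g)(x)-\varphi(\tilde u(x)),
\]
which is well defined since $|\varphi'(\tilde u)g|\le\|\varphi'\|_\infty|g|$, so $R(\varphi'(u)g)\in L^p$. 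Taking expectations at $t\wedge\tau_k$ and using the Markov property together with Lemma \ref{lm.basic}(3) for $U(\varphi'(\tilde u)g)$, the identity above yields
\[
\tilde v(x)-\mathbb E_x\tilde v(X_t)=\mathbb E_xK^x_t\ge0 .
\]
Here we use $\mathbb E_x\varphi(\tilde u(X_t))\to\varphi(0)$ as $t\to\infty$, which follows because $\mathbb E_x Ug^\pm(X_t)\to0$ and $\varphi$ is Lipschitz. Hence $P_t\tilde v\le \tilde v$, and passing to $L^p$ classes gives $T_tv\le v$.

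It remains to show $v\ge0$. Letting $t\to\infty$ in the same identity gives $\tilde v(x)=\lim_t\mathbb E_xK^x_t\ge0$, once we know that $\mathbb E_x\tilde v(X_t)\to0$. This follows from the same limit computation, since $\varphi(\tilde u(X_t))$ tends to $\varphi(0)$ in $L^1(\mathbb P_x)$ and $\mathbb E_xU(\varphi'(\tilde u)g)(X_t)\to0$. Finally $v\in L^p$, because $\varphi(u)$ has linear growth. The main obstacle is the first step: positivity of $\tilde u$ along paths, together with the integrability needed to pass to the limits ($t\to\infty$, localization). We handle this using the uniform integrability from Lemma \ref{lm.basic}(3), the bound $|\varphi(a)-\varphi(0)|\le\|\varphi'\|_\infty a$, and domination by $Ug^++Ug^-$.
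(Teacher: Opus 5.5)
Your proposal is correct and follows essentially the paper's route: you write $u$ as $Ug^+-Ug^-$, decompose it with Lemma~\ref{lm.basic}(3), apply the It\^o--Meyer formula, and identify $v$ with the expected total mass of the increasing part. The paper sets $v(x)=\mathbb E_xA_\zeta$ and uses additivity of $A$, while you define $\tilde v$ by the identity and check $\tilde v-P_t\tilde v=\mathbb E_xK^x_t\ge0$ and $\tilde v=\lim_t\mathbb E_xK^x_t$; this is the same quantity, obtained without needing additivity. Your concern about $\tilde u(X_t)\ge0$ along paths, which the paper does not address, can be avoided entirely by extending $\varphi$ affinely with slope $\varphi'(0)$ to a convex $C^1$ function on $\mathbb R$, since $\tilde u=u\ge0$ holds $m$-a.e. and the kernel of $R$ does not charge $m$-null sets.
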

\begin{proof}
By Remark \ref{rem.main} we may assume that $E$ is a Lusin
topological space, $\mathcal A=\mathcal B(E)$, and that there exists
a right Markov process $\mathbb X$ representing $(T_t)$ (see Definition \ref{def.rep}).
Set
\[
w^\pm:=Ug^\pm,
\qquad
w:=w^+-w^-.
\]
Then $w=u$ a.e. By Lemma \ref{lm.basic}(3), for a.e. $x\in E$ there
exist uniformly integrable martingales $M^{+,x}$ and $M^{-,x}$,
starting from zero, such that
\[
w^\pm(X_t)
=
w^\pm(X_0)
-
\int_0^t g^\pm(X_s)\,ds
+
M_t^{\pm,x}.
\]
Consequently,
\[
w(X_t)
=
w(X_0)
-
\int_0^t g(X_s)\,ds
+
M_t^x,
\]
where $M^x:=M^{+,x}-M^{-,x}$ is a uniformly integrable martingale.
By the It\^o--Meyer formula \cite[Theorem IV.70]{Protter}, there exist a positive additive
functional $A$ and a uniformly integrable martingale $N^x$, with
$N_0^x=0$, such that
\[
\varphi(w)(X_t)
=
\varphi(w)(X_0)
-
\int_0^t
\varphi'(w(X_s))g(X_s)\,ds
+
A_t+N_t^x,\quad t\ge 0.
\]
Evaluating this identity at $t=\zeta$ and taking expectations yields
\[
\varphi(u(x))
=
\varphi(0)
+
R(\varphi'(u)g)(x)
-
v(x)
\quad\text{a.e.},
\]
where $v(x):=\mathbb E_xA_\zeta$. 
Using the Markov property of $\mathbb X$ and the additivity of $A$, one checks that
$v$ is a supermedian function. This finishes the proof.  
\end{proof}

\section{The method of sub- and supersolutions for sub-Markovian semigroups}
\label{sec5}
We now turn to nonlinear resolvent equations of the form
\begin{equation}
\label{eq.sub}
u=\gamma+Rg(\cdot,u),
\end{equation}
where $\gamma$ is a supermedian function and
$g:E\times\mathbb R\to\mathbb R$ is a Carath\'eodory function. 
We begin by introducing the corresponding notions of sub- and supersolution.

\begin{definition}
We say that a  function $w\in L^p(E;m)$ is a subsolution to
\eqref{eq.sub}
if $g^+(\cdot,w)\in L^p(E;m)$ and there exists a supermedian function $v$
such that
\[
w+Rg^-(\cdot,w)=\gamma+Rg^+(\cdot,w)-v.
\]
\end{definition}

\begin{definition}
We say that a  function $w\in L^p(E;m)$ is a supersolution to
\eqref{eq.sub}
if $g^+(\cdot,w)\in L^p(E;m)$ and there exists a supermedian function $v$
such that
\[
w+Rg^-(\cdot,w)=\gamma+Rg^+(\cdot,w)+v.
\]
\end{definition}

\begin{proposition}
\label{prop.maxsub}
Assume that $(T_t)$ is sub-Markovian. If $u,w$ are  subsolutions to
 \eqref{eq.sub}, then $u\vee w$ is a subsolution to \eqref{eq.sub}.
\end{proposition}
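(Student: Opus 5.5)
Write $z:=u\vee w$. By definition there are supermedian functions $v_1,v_2$ with
\[
u=\gamma+Rg^+(\cdot,u)-Rg^-(\cdot,u)-v_1,\qquad w=\gamma+Rg^+(\cdot,w)-Rg^-(\cdot,w)-v_2 .
\]
Integrability is immediate: $g^+(\cdot,z)=\mathbf1_{\{u\ge w\}}g^+(\cdot,u)+\mathbf1_{\{u<w\}}g^+(\cdot,w)\in L^p$. The aim is a supermedian $v$ with
\[
z+Rg^-(\cdot,z)=\gamma+Rg^+(\cdot,z)-v .
\]
The natural plan is to work on the right Markov process $\mathbb X$ (Remark \ref{rem.main}) and repeat the semimartingale argument of Proposition \ref{lm2}, this time keeping track of equalities rather than inequalities.

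First step: semimartingale decompositions. By Lemma \ref{lm.basic}(1) we take excessive versions of $\gamma,v_1,v_2$ and of the potentials $Ug^\pm(\cdot,u)$, $Ug^\pm(\cdot,w)$; for the possibly non-$L^p$ potentials of $g^-$, Lemma \ref{lm.basic}(3) applies on the set where they are finite. This set has full measure because $Rg^-(\cdot,u)=\gamma+Rg^+(\cdot,u)-v_1-u$ is finite a.e. By Lemma \ref{lm.basic}(2)--(4), for a.e.\ $x$ we get under $\mathbb P_x$
\[
\tilde u(X_t)=\tilde u(x)-\int_0^t g(X_s,\tilde u(X_s))\,ds-A^\gamma_t+A^{1}_t+M^1_t,
\]
where $A^\gamma$ and $A^1$ are the increasing processes of $\gamma$ and $v_1$ and $M^1$ is a local martingale. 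The analogous identity holds for $w$ with $A^2$, $M^2$.

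Second step: Tanaka/It\^o--Meyer. Write $z=w+(u-w)^+$ and apply the It\^o--Meyer formula to $(\tilde u-\tilde w)^+(X_t)$, exactly as in Proposition \ref{lm2}. The resulting drift is
\[
-\int_0^t\bigl[\mathbf1_{\{\tilde u>\tilde w\}}g(X_s,\tilde u)+\mathbf1_{\{\tilde u\le\tilde w\}}g(X_s,\tilde w)\bigr]ds=-\int_0^t g(X_s,\tilde z(X_s))\,ds .
\]
The $dA^\gamma$ contributions combine to $-A^\gamma$. The remaining terms are increasing: $\int\mathbf1_{\{u_->w_-\}}dA^1+\int\mathbf1_{\{u_-\le w_-\}}dA^2$ plus the local time $L$. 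Hence
\[
\tilde z(X_t)=\tilde z(x)-\int_0^t g(X_s,\tilde z)\,ds-A^\gamma_t+C_t+\text{loc.\ mart.},
\]
with $C$ an increasing additive functional. Using the same bound as in Lemma \ref{lm.basic}(4), $C$ can be taken to be additive and independent of $x$.

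Third step: taking expectations. Stop at a localizing sequence $\tau_k$, take expectations, and let $k\to\infty$. Since $0\le\tilde z\le\tilde u^++\tilde w^+$, which is dominated by potentials plus $\gamma$, the boundary terms $\mathbb E_x\tilde z(X_{\tau_k})$ are controlled. Lemma \ref{lm.basic}(3) kills the potential parts, and the $\gamma$ part is handled as $\mathbb E_x\gamma(X_{\tau_k})+\mathbb E_xA^\gamma_{\tau_k}\to\gamma(x)$ by (2). This yields
\[
z=\gamma+Rg^+(\cdot,z)-Rg^-(\cdot,z)-v,\qquad v(x):=\lim_k\mathbb E_xC_{\tau_k}=\mathbb E_xC_\zeta .
\]
Here $v$ is supermedian by additivity and the Markov property, as at the end of Proposition \ref{prop.if1}. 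Finiteness of $v$ and of $Rg^-(\cdot,z)$ then comes from the identity itself, because the other terms lie in $L^p$. Strictly, $z$ need not be positive, so the limit $\mathbb E_x\tilde z(X_{\tau_k})$ needs the two-sided bound $|\tilde z|\le|\tilde u|+|\tilde w|$ together with the convergence of each of $\mathbb E_x\tilde u(X_{\tau_k})$ and $\mathbb E_x\tilde w(X_{\tau_k})$ (to $\lim\mathbb E_x\gamma(X_{\tau_k})$ plus zero terms). This is obtained by writing each $\tilde u(X_{\tau_k})$ via its representation.

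Main obstacle. The delicate point is this limit passage. The terms $\mathbb E_x\gamma(X_{\tau_k})$ need not vanish, so the argument has to keep $\gamma$ exactly rather than dropping it. I would handle this by treating $\gamma(X)+A^\gamma$ as a single local martingale and choosing $\tau_k$ as exact terminal times, for instance hitting times of $\{\tilde u+\tilde w+\gamma>k\}$, so that Lemma \ref{lm.basic}(5) guarantees the limiting objects are supermedian. I would also check that $\mathbb E_xC_{\tau_k}$ converges monotonically, so the limit defining $v$ exists.
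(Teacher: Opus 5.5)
\textbf{There is a genuine gap in your third step, in the limit of the boundary term $\mathbb E_x\tilde z(X_{\tau_k})$.} Your first two steps follow the paper's route. The paper removes $\gamma$ first by passing to $g_0(x,y)=g(x,y+\gamma(x))$. Keeping $\gamma$ is harmless, since along a localizing sequence $\gamma(x)=\mathbb E_x\gamma(X_{\tau_k})+\mathbb E_xA^\gamma_{\tau_k}$.

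You claim that $\mathbb E_x\tilde u(X_{\tau_k})$ tends to $\lim_k\mathbb E_x\gamma(X_{\tau_k})$ plus vanishing terms. But $\tilde u=\gamma+Ug(\cdot,u)-\tilde v_1$, and Lemma~\ref{lm.basic}(3) kills only the potential part. The term $\mathbb E_x\tilde v_1(X_{\tau_k})$ need not vanish, because a supermedian function is in general not the potential of its additive functional. Lemma~\ref{lm.basic}(2) only gives $\tilde v_1(x)=\lim_k\bigl(\mathbb E_x\tilde v_1(X_{\tau_k})+\mathbb E_xA^1_{\tau_k}\bigr)$.

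Here is a concrete counterexample. Take the Dirichlet Laplacian on a ball in $\mathbb R^3$ with $1\le p<3$. The Green function $v_1=G_D(\cdot,y)$ is supermedian, its additive functional vanishes, and $\mathbb E_xv_1(X_{\tau_k})=v_1(x)$ for the hitting times of $\{v_1\ge k\}$. Now take $\gamma=0$, $g=0$, $u=w=-v_1$, which are subsolutions. Your construction gives $C=0$, so your final identity would read $-v_1=0$. Thus $v=\mathbb E_xC_\zeta$ is not the correct defect.

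There is a second problem. Even if $\mathbb E_x\tilde u(X_{\tau_k})$ and $\mathbb E_x\tilde w(X_{\tau_k})$ both converged, the bound $|\tilde z|\le|\tilde u|+|\tilde w|$ gives only boundedness of $\mathbb E_x(\tilde u\vee\tilde w)(X_{\tau_k})$, not convergence, since the maximum is nonlinear.

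\textbf{The missing idea is to compare $\tilde z$ with the single excessive function $\tilde v_1\wedge\tilde v_2$.}
\begin{itemize}
\item Write $\tilde z-\gamma=(Ug(\cdot,u)-\tilde v_1)\vee(Ug(\cdot,w)-\tilde v_2)$ and $-(\tilde v_1\wedge\tilde v_2)=(-\tilde v_1)\vee(-\tilde v_2)$. Since the maximum is $1$-Lipschitz, these differ by at most $U|g(\cdot,u)|+U|g(\cdot,w)|$.
\item The finiteness set is absorbing (Lemma~\ref{lm.basic}(6)). Combined with the strong Markov property, the expectation of this difference at $X_{\tau_k}$ tends to $0$.
\item Hence $\lim_k\mathbb E_x(\tilde z-\gamma)(X_{\tau_k})=-h(x)$, where $h(x):=\lim_k\mathbb E_x(\tilde v_1\wedge\tilde v_2)(X_{\tau_k})$ exists as a monotone limit.
\item $h$ is supermedian by Lemma~\ref{lm.basic}(5). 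This is exactly where exact terminal times are needed: the $\tau_k$ are the hitting times of $\{\tilde v_1+\tilde v_2\ge k\}$.
\item The correct defect is the excessive regularization of $h$ plus $\mathbb E_xC_\infty$, which is what the paper uses.
\end{itemize}
So you named the right tool (exact terminal times and Lemma~\ref{lm.basic}(5)), but you aimed it at $\gamma$, where it is not needed. It is essential for $\tilde v_1\wedge\tilde v_2$, whose contribution your argument drops.
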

\begin{proof}
Since $u$ is a subsolution  to \eqref{eq.sub} if and only if $u_0:=u-\gamma$
is a subsolution  to \eqref{eq.sub} with $\gamma=0$ and $g$ replaced by $g_0(x,y):=g(x,y+\gamma(x))$,
we may and will assume without loss of generality that $\gamma=0$.

By the definition of a subsolution to \eqref{eq.sub} there exist supermedian functions  $v_1, v_2$ such that
\[
u=R g(\cdot,u)-v_1, \quad w=R g(\cdot,w)-v_2.
\]
By Remark \ref{rem.main} we may assume that $E$ is a Lusin topological space, 
$\mathcal A=\mathcal B(E)$, and  there exists a right Markov process $\mathbb X$  that represents 
the semigroup $(T_t)$ (see Definition \ref{def.rep}).   
We choose $\mathcal B(E)$-measurable representatives of the $m$-equivalence classes of $u$ and $w$
and continue to denote them by  $u$ and $w$. By Lemma \ref{lm.basic}(1)
there exist $m$-versions $\tilde v_1,\tilde v_2$ of $v_1,v_2$, respectively, that are $\mathcal U$-excessive.
Let $\tilde u:= Ug(\cdot,u)-\tilde v_1$ on the set $\{U|g(\cdot,u)|+\tilde v_1<\infty\}$ and zero on its complement, and
$\tilde w:= Ug(\cdot,w)-\tilde v_2$ on the set $\{U|g(\cdot,w)|+\tilde v_2<\infty\}$ and zero on its complement.
Set $E_0:=\{U|g(\cdot,u)|+\tilde v_1+U|g(\cdot,w)|+\tilde v_2<\infty\}$.
By  Lemma \ref{lm.basic} there exist  uniformly integrable  martingales  
$M^{u,x}, N^{w,x}$, local martingales $M^{v_1,x}, N^{v_2,x}$,  
and positive additive functionals  $A,B$ such  that, for every $x\in E_0$,
$$
\begin{gathered}
\tilde u\left(X_t\right)+\tilde v_1(X_t)=\tilde u\left(X_0\right)+\tilde v_1\left(X_0\right)-\int_0^t g\left(X_r,  u\left(X_r\right)\right) dr
+\int_0^tdM^{u,x}_r, \quad t \geq 0,\,\mathbb P_x\text{-a.s.}, \\
\tilde w\left(X_t\right)+\tilde v_2(X_t)=\tilde w\left(X_0\right)+\tilde v_2(X_0)-\int_0^t g\left(X_r,  w\left(X_r\right)\right) d r
+\int_0^t dN^{w,x}_r, \quad t \geq 0,\,\mathbb P_x\text{-a.s.},\\
\tilde v_1\left(X_t\right)=\tilde v_1\left(X_0\right)-A_t+\int_0^tdM^{v_1,x}_r, \quad 
\tilde v_2\left(X_t\right)=\tilde v_2\left(X_0\right)-B_t+\int_0^t dN^{v_2,x}_r, \quad t \geq 0,\,\mathbb P_x\text{-a.s.}
\end{gathered}
$$
Set $M^x:= M^{u,x}-M^{v_1,x}$ and $N^x:= N^{w,x}-N^{v_2,x}$.
Then for $x\in E_0$,
$$
\begin{gathered}
\tilde u\left(X_t\right)=\tilde u\left(X_0\right)-\int_0^t g\left(X_r,  u\left(X_r\right)\right) dr
+A_t+\int_0^tdM^{x}_r, \quad t \geq 0,\,\mathbb P_x\text{-a.s.}, \\
\tilde w\left(X_t\right)=\tilde w\left(X_0\right)-\int_0^t g\left(X_r, w\left(X_r\right)\right) d r
+B_t+\int_0^t dN^{x}_r, \quad t \geq 0,\,\mathbb P_x\text{-a.s.}
\end{gathered}
$$
By the It\^o--Meyer formula (see, e.g., \cite[Theorem IV.70]{Protter})
and Lemma \ref{lm.basic} there exists a  positive additive functional  $C$ such that
$$
\begin{aligned}
(\tilde u \vee \tilde w)\left(X_t\right)= & (\tilde u \vee \tilde w)\left(X_0\right)-\int_0^t \mathbf{1}_{\{\tilde u(X)_{r} \geq \tilde w(X)_{r}\}}
g\left(X_r,  u\left(X_r\right)\right) \,d r+\int_0^t \mathbf{1}_{\{\tilde u(X)_{r-} \geq \tilde w(X)_{r-}\}} dA_r \\
& +\int_0^t \mathbf{1}_{\{\tilde u(X)_{r-} \geq \tilde w(X)_{r-}\}} \,d M^x_r
-\int_0^t\mathbf{1}_{\{\tilde u(X)_{r-} < \tilde w(X)_{r-}\}} g\left(X_r,  w\left(X_r\right)\right) \,d r \\
& +\int_0^t \mathbf{1}_{\{\tilde u(X)_{r-} <\tilde w(X)_{r-}\}} \,d B_r+\int_0^t \mathbf{1}_{\{\tilde u(X)_{r-} 
< \tilde w(X)_{r-}\}} \,d N^x_r+C_t, \quad t\ge 0,\, \mathbb P_x \text {-a.s.}
\end{aligned}
$$
for $x\in E_0$.
By \eqref{eq.equiv123}, $Ug(\cdot,u)=Ug(\cdot,\tilde u)$ a.e. and $Ug(\cdot,w)=Ug(\cdot,\tilde w)$ a.e.  Consequently,
for a.e. $x\in E$,
$$
\begin{aligned}
(\tilde u \vee \tilde w)\left(X_t\right)= & (\tilde u \vee \tilde w)\left(X_0\right)
-\int_0^t g\left(X_r,(\tilde u \vee \tilde w)\left(X_r\right)\right) \,d r
\\& +\int_0^t \mathbf{1}_{\{\tilde u(X)_{r-} \geq \tilde w(X)_{r-}\}} \,d A_r
+\int_0^t \mathbf{1}_{\{\tilde u(X)_{r-} < \tilde w(X)_{r-}\}} \,d B_r +C_t
\\&+\int_0^t \mathbf{1}_{\{\tilde u(X)_{r-} \geq \tilde w(X)_{r-}\}} \,d M^x_r
+\int_0^t \mathbf{1}_{\{\tilde u(X)_{r-} < \tilde w(X)_{r-}\}}\,d N^x_r, \quad t \geq 0, \mathbb P_x\text {-a.s. }
\end{aligned}
$$
 Let $(\tau_k)$ be a localizing sequence for $N^x$ and $M^x$ that consists of exact terminal times
 (take e.g. $\tau_k:=\inf\{t>0: \tilde v_1(X_t)+\tilde v_2(X_t)\ge k\}$, see the proof of 
 \cite[Theorem 51.1]{Sharpe}). Then substituting  $t\mapsto \tau_k$
 and applying $\mathbb E_x$ to both sides of the above equality gives

\begin{equation}
\label{eq.ttlim}
\begin{split}
(\tilde u \vee \tilde w)(x)= & \mathbb{E}_x(\tilde u \vee \tilde w)\left(X_{\tau_k}\right)
+\mathbb{E}_x \int_0^{\tau_k} g\left(X_r,(\tilde u \vee \tilde w)\left(X_r\right)\right) \,d r \\
& -\mathbb{E}_x \int_0^{\tau_k} \mathbf{1}_{\{\tilde u(X)_{r-} \geq \tilde w(X)_{r-}\}} \,d A_r
-\mathbb{E}_x \int_0^{\tau_k} \mathbf{1}_{\{\tilde u(X)_{r-} < \tilde w(X)_{r-}\}} \,d B_r-\mathbb{E}_x C_{\tau_k} \text { a.e. }
\end{split}
\end{equation}
Observe that
\begin{equation}
\label{eq.radon}
\lim_{k\to\infty}\mathbb{E}_x(\tilde u \vee \tilde w)\left(X_{\tau_k}\right)
=-\lim_{k\to\infty}\mathbb{E}_x( \tilde v_1\wedge \tilde v_2)\left(X_{\tau_k}\right).
\end{equation}
Indeed, for $x\in E_0$,
\[
\begin{split}
(\tilde u\vee \tilde w)(x)&=(\tilde u-\tilde w)^+(x)+\tilde w(x)\\&= \left(Ug(\cdot,u)(x)
-Ug(\cdot,w)(x)-\tilde v_1(x)+\tilde v_2(x)\right)^+ +Ug(\cdot,w)(x)-\tilde v_2(x)
\end{split}
\]
On the other hand
\[
-(\tilde v_1\wedge \tilde v_2)(x)=(-\tilde v_1(x))\vee (-\tilde v_2(x))= (-\tilde v_1(x)+\tilde v_2(x))^+-\tilde v_2(x),\quad x\in E.
\]
Consequently, for $x\in E_0$,
\[
\begin{split}
&|(\tilde u\vee \tilde w)(x)+\tilde v_1(x)\wedge \tilde v_2(x)|\\&\quad
\le \left| \left(Ug(\cdot, u)(x)-Ug(\cdot, w)(x)-\tilde v_1(x)+\tilde v_2(x)\right)^+- (-\tilde v_1(x)+\tilde v_2(x))^+ \right|
\\&\quad\quad+U|g(\cdot, w)|(x)
\le U|g(\cdot, u)|(x)+2U|g(\cdot, w)|(x).
\end{split}
\]
Now using the fact that $E_0$ is an absorbing set (see Lemma \ref{lm.basic}(6)) and the 
strong Markov property of $\mathbb X$, we conclude
\[
\mathbb E_x|(\tilde u\vee \tilde w)(X_{\tau_k})+\tilde v_1(X_{\tau_k})\wedge \tilde v_2(X_{\tau_k})|
\le \mathbb E_x\int_{\tau_k}^\infty|g(X_s, u(X_s))|\,ds+2\mathbb E_x\int_{\tau_k}^\infty|g(X_s, w(X_s))|\,ds.
\]
The right-hand side of the above inequality tends to zero whenever the  integrals are finite. 
By the assumptions, this holds for  $m$-a.e. $x\in E$.
From \eqref{eq.radon} and  Lemma \ref{lm.basic}, we infer  that 
$h(x):=-\lim_{k\to\infty}\mathbb{E}_x(\tilde u \vee \tilde w)\left(X_{\tau_k}\right)$
is a supermedian function. Let $\tilde h$ denote its excessive regularization.
Letting $k\to \infty$ in \eqref{eq.ttlim} yields 
$$
\tilde u \vee \tilde w=Ug(\cdot, \tilde  u \vee  \tilde w)-\tilde v, \quad \text { a.e., }
$$
where 
\[
\tilde v(x):=\tilde h(x)+\mathbb{E}_x \int_0^{\infty} \mathbf{1}_{\{\tilde u(X)_{r-} \geq \tilde w(X)_{r-}\}} \,d A_r
+\mathbb{E}_x \int_0^{\infty} \mathbf{1}_{\{\tilde u(X)_{r-} < \tilde w(X)_{r-}\}}\,d B_r+\mathbb{E}_x C_{\infty}. 
\]
It is routine to  check that each term in the definition of $\tilde v$ is supermedian  
(use additivity of $A,B,C$ and the Markov property of $\mathbb X$). 
Therefore, $\tilde v$ is supermedian, which in turn implies that  $u \vee w$ is a subsolution to \eqref{eq.sub}.
\end{proof}

\begin{proposition}
\label{prop.exist}
Suppose that there exist positive $\mathcal A$-measurable functions $h_1,h_2$ on $E$ such that
\begin{equation}
\label{eq.bound1}
-h_1(x)\le g(x,y) \leq  h_2(x), \quad x \in  E,y \in \mathbb{R},
\end{equation}
and assume that $Rh_1\in L^p(E;m)$ and  $h_2\in L^p(E;m)$. Let $\gamma$ be a bounded supermedian function.
Then there exists a solution to \eqref{eq.sub}.
\end{proposition}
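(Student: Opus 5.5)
We plan to solve $u=\gamma+Rg(\cdot,u)$ by Schauder's fixed point theorem, with $g$ split as $g=g^+-g^-$ and $0\le g^-\le h_1$, $0\le g^+\le h_2$. The right-hand side then reads $\gamma+Rg^+(\cdot,u)-Rg^-(\cdot,u)$. Since $h_1$ need not lie in $L^p(E;m)$, we first truncate the negative part. For $n\ge1$ set $g_n:=g^+-(g^-\wedge n)$. This is Carath\'eodory and satisfies $|g_n|\le h_2+n\in L^p(E;m)$, because $m$ is finite. Define $\Phi_n(u):=\gamma+Rg_n(\cdot,u)$ on $L^p(E;m)$. By Carath\'eodory continuity and dominated convergence, the Nemytskii map $u\mapsto g_n(\cdot,u)$ is continuous from $L^p$ to $L^p$ (pass to a.e.-convergent subsequences). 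Since $R$ is compact, $\Phi_n$ is continuous and maps $L^p$ into the set $\gamma+R(\{|\phi|\le h_2+n\})$. This set is relatively compact and is contained in a bounded closed convex set $K_n$. Schauder's theorem then gives $u_n=\gamma+Rg^+(\cdot,u_n)-R(g^-(\cdot,u_n)\wedge n)$.

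Next we pass to the limit as $n\to\infty$. We have the uniform bounds $-Rh_1\le u_n-\gamma\le Rh_2$, and these functions lie in $L^p$ by assumption. The positive parts $Rg^+(\cdot,u_n)$ stay in a compact subset of $L^p$, because $0\le g^+(\cdot,u_n)\le h_2$ and $R$ is compact; along a subsequence they converge in $L^p$ and a.e. The negative parts $w_n:=R(g^-(\cdot,u_n)\wedge n)$ are supermedian functions dominated by $Rh_1\in L^p$. By Theorem~\ref{lm.upseq} (the Deny-type theorem; here we use the sub-Markovian standing assumption of this section), a further subsequence converges a.e. The limit is finite because $w_n\le Rh_1$, and dominated convergence upgrades the convergence to $L^p$. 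Hence $u_n\to u$ a.e. and in $L^p$.

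The remaining step is to identify the limit, i.e.\ to show $Rg^\pm(\cdot,u_n)\to Rg^\pm(\cdot,u)$. For $g^+$ this follows from continuity in $y$, which gives $g^+(x,u_n(x))\to g^+(x,u(x))$ a.e., together with domination by $h_2$. For the negative part we use the kernel representation \eqref{eq.rochlin}. We have $g^-(\cdot,u_n)\wedge n\to g^-(\cdot,u)$ a.e., everything is dominated by $h_1$, and $Rh_1<\infty$ a.e. Dominated convergence applied under each measure $r_0(x,dz)$ then yields pointwise convergence, provided the null set where convergence fails is $r_0(x,\cdot)$-null for a.e.\ $x$. This holds because $R$ maps $m$-null functions to zero a.e., by Remark~\ref{rem.not} and the $L^p$-well-definedness. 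We conclude that $Rg^-(\cdot,u)\in L^p$ and that $u=\gamma+Rg(\cdot,u)$ in the sense $u+Rg^-(\cdot,u)=\gamma+Rg^+(\cdot,u)$.

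We expect the main obstacle to be this compactness of the negative parts, which are not in $L^p$ before resolvent. The plan is to obtain it from Theorem~\ref{lm.upseq} combined with the $L^p$ majorant $Rh_1$. A fallback, if needed, is to use monotone truncation in $n$ together with the kernel representation to handle the a.e.\ issues.
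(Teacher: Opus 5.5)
Your proposal is correct and follows essentially the same route as the paper: truncate to $g\vee(-n)$, get $u_n$ from Schauder's theorem via compactness of $R$, apply the Deny-type Theorem~\ref{lm.upseq} to the supermedian potentials to extract an a.e.-convergent subsequence, and identify the limit by dominated convergence, using the kernel representation and the $L^p$ majorant $Rh_1$. Your remark that Theorem~\ref{lm.upseq} needs the sub-Markovian setting matches the paper's own implicit use of that theorem in this section.
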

\begin{proof}
{\bf Step 1.} Assume additionally that $h_1\in L^p(E;m)$.
Set $r:=\|\gamma\|+\|R(h_1+h_2)\|$. For $u \in L^p(E;m)$, we let
$$
\Phi(u)=\gamma+R g(\cdot, u).
$$
Observe that by the assumptions,
$$
|\Phi(u)| \leq \gamma+ Rh_1+Rh_2.
$$
Hence
$$
\|\Phi(u)\| \leq r
$$
By \eqref{eq.bound1}, kernel representation of the resolvent,  and  the fact that $g$ is a Carath\'eodory function and $h_1,h_2\in L^p(E;m)$, 
the dominated convergence theorem shows that $\Phi$ is continuous.
The compactness of $\Phi(L^p(E;m))$ follows easily from \eqref{eq.bound1}, 
$h_1,h_2\in L^p(E;m)$ and the compactness of the resolvent $(R_\alpha)$.
The existence result follows from Schauder’s fixed-point theorem.

{\bf Step 2}. By  Step 1 for any $n\ge 1$ there exists a solution $u_n\in L^p(E;m)$ to 
\begin{equation}
\label{eq.subnnn}
u=\gamma+Rg_n(\cdot,u),
\end{equation}
where $g_n(x,y):= g(x,y)\vee (-n)$. Note that $Rg^+(\cdot,u_n)$, $Rg^-_n(\cdot,u_n)$ are supermedian functions.
Therefore, by  Deny's theorem (see Theorem  \ref{lm.upseq})  there exists a subsequence, still indexed by $(n)$, 
such that $(Rg^+(\cdot,u_n))$
and $(Rg^-_n(\cdot,u_n))$  converge $m$-a.e., which implies that $u_n$  
converges $m$-a.e. to an $\mathcal A$-measurable  function $u$.
Furthermore, by the right-hand side inequality in \eqref{eq.bound1}, the  dominated convergence theorem  and the compactness 
of the resolvent $(R_\alpha)$, $(g^+(\cdot,u_n))$ and $(Rg^+(\cdot,u_n))$  are  convergent in $L^p(E;m)$
to $(g^+(\cdot,u))$ and $(Rg^+(\cdot,u))$, respectively.
On the other hand,   by \eqref{eq.bound1}, $|u_n|\le \gamma +Rh_1+Rh_2\in L^p(E;m)$.
Thus,  by  the  dominated convergence theorem again, $u_n\to u$ in $L^p(E;m)$.
Now, we want to pass to the limit in 
\begin{equation}
\label{eq.subnnnnn}
u_n+Rg^-_n(\cdot,u_n)=\gamma+Rg^+(\cdot,u_n).
\end{equation}
It remains to prove the convergence of $(Rg^-_n(\cdot,u_n))$. By \eqref{eq.bound1}
\[
R|g^-_n(\cdot,u_n)-g^-(\cdot,u)| \leq 2 R h_1 \in L^p(E;m) .
\]
Hence
\[
\|R g^-_n(\cdot,u_n)-R g^-(\cdot,u)\| \leq\|R|g^-_n(\cdot,u_n)-g^-(\cdot,u)|\| \longrightarrow 0 .
\]
\end{proof}

In what follows, we impose the following integrability assumptions on $g$:

{\bf Assumption (Int):}
\begin{enumerate}
\item[(i)] if $u_1\le u_2$, $u_1,u_2\in L^p(E;m)$, and furthermore 
$Rg^-(\cdot,u_1), Rg^-(\cdot,u_2) \in L^p(E;m)$, $g^+(\cdot,u_1), g^+(\cdot,u_2)\in L^p(E;m)$,
then 
\[
\sup_{u_1\le y\le u_2} g^+(\cdot,y), \,R(\sup_{u_1\le y\le u_2} g^-(\cdot,y))\in L^p(E;m);
\]
\item[(ii)]  if $u\in L^p(E;m)$, then $g^+(\cdot,u)\in L^p(E;m)$ 
and if $(u_n)\subset L^p(E;m)$, $u_n\nearrow u$, then $g^+(\cdot,u_n)\to g^+(\cdot,u)$ in $L^p(E;m)$.
\end{enumerate}

\begin{theorem}
\label{th.im4}
Assume that $(T_t)$ is sub-Markovian and $\gamma$ is a bounded supermedian function. 
\begin{enumerate}
\item[(1)] Assume  (Int). Let $\psi \in L^p(E;m)$. Suppose that there exists a subsolution $\underline{u}$ 
to \eqref{eq.sub} such that $\underline{u} \leq \psi$. Then there exists a maximal subsolution 
$u^*$ to  \eqref{eq.sub}  such that $u^* \leq \psi $.
\item[(2)] Assume (Int), and  suppose that there exists a subsolution $\underline{u}$ and a supersolution $\bar{u}$ 
to  \eqref{eq.sub}  such that $\underline{u} \leq \bar{u}$. 
Then there exists a maximal solution $u$ to  \eqref{eq.sub}  with  $\underline{u} \leq u \leq \bar{u}$. 
Moreover, this solution is also the  maximal subsolution lying below $\bar{u}$.
\end{enumerate}
\end{theorem}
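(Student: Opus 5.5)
Part (1) will be proved by a Zorn/countable-sup argument in the lattice of subsolutions, using Proposition \ref{prop.maxsub} and Deny's theorem to pass to limits. Part (2) will then follow by comparing the maximal subsolution with a truncated problem solved through Proposition \ref{prop.exist}.

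\emph{Part (1).} Let $\mathcal S$ be the set of subsolutions $w$ of \eqref{eq.sub} with $\underline u\le w\le\psi$. It is nonempty, and by Proposition \ref{prop.maxsub} it is closed under finite maxima. Set $s:=\sup\{\int_E \arctan w\,dm: w\in\mathcal S\}$ and pick $w_n\in\mathcal S$ with $\int\arctan w_n\,dm\to s$. Replacing $w_n$ by $w_1\vee\dots\vee w_n$, we may assume $(w_n)$ is nondecreasing, so $w_n\nearrow u^*\le\psi$ with $u^*\in L^p$, since $\underline u\le w_n\le \psi$.

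The key step is to show that $u^*$ is a subsolution. Write
\[
w_n+Rg^-(\cdot,w_n)=\gamma+Rg^+(\cdot,w_n)-v_n,
\]
with $v_n$ supermedian. By (Int)(ii), $g^+(\cdot,w_n)\to g^+(\cdot,u^*)$ in $L^p$, so $Rg^+(\cdot,w_n)\to Rg^+(\cdot,u^*)$. By (Int)(i), applied with $u_1=\underline u$ and $u_2=\psi$ (its integrability hypotheses hold along the subsolutions, and for $\psi$ I will check them from the setting), the terms $g^-(\cdot,w_n)$ are dominated by $G:=\sup_{\underline u\le y\le\psi}g^-(\cdot,y)$ with $RG\in L^p$. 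Hence $Rg^-(\cdot,w_n)\to Rg^-(\cdot,u^*)$ a.e. and in $L^p$, by dominated convergence through the kernel representation and Carath\'eodory continuity. Consequently $v_n$ converges a.e. and in $L^p$ to some $v\ge0$.

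The point needing care is that $v$ is supermedian. I will check this directly: $T_tv_n\le v_n$, and $T_t$ is continuous on $L^p$, so $T_tv\le v$. Thus $u^*$ is a subsolution with $u^*\in\mathcal S$. Maximality follows because any $w\in\mathcal S$ satisfies $u^*\vee w\in\mathcal S$ and $\int\arctan(u^*\vee w)\,dm\le s=\int\arctan u^*\,dm$, which forces $w\le u^*$ a.e.

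\emph{Part (2).} Apply (1) with $\psi=\bar u$ to get the maximal subsolution $u^*$ below $\bar u$; it remains to show that $u^*$ is a solution. Consider the truncated nonlinearity
\[
\tilde g(x,y):=g\bigl(x,(y\vee u^*(x))\wedge\bar u(x)\bigr).
\]
By (Int)(i) it satisfies \eqref{eq.bound1} with $h_2=\sup_{u^*\le y\le\bar u}g^+(\cdot,y)\in L^p$ and $h_1=\sup_{u^*\le y\le \bar u} g^-(\cdot,y)$, where $Rh_1\in L^p$. Proposition \ref{prop.exist} then gives a solution $z$ of $z=\gamma+R\tilde g(\cdot,z)$.

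Next I will show that $u^*\le z\le\bar u$, using Proposition \ref{lm2} (Kato). For the lower bound, $u^*-z=R(g(\cdot,u^*)-\tilde g(\cdot,z))-v_1$. Proposition \ref{lm2} with $v=0$ gives
\[
(u^*-z)^+\le R\bigl(\mathbf 1_{\{u^*>z\}}(g(\cdot,u^*)-\tilde g(\cdot,z))\bigr),
\]
and on $\{z<u^*\}$ we have $\tilde g(\cdot,z)=g(\cdot,u^*)$, so the right-hand side vanishes. This application is formal because the negative parts need not lie in $L^p$, so I will approximate by the truncations $g\vee(-n)$ as in Step 2 of Proposition \ref{prop.exist}. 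The bound $z\le\bar u$ is obtained symmetrically. On $[u^*,\bar u]$ we have $\tilde g(\cdot,z)=g(\cdot,z)$, so $z$ solves \eqref{eq.sub}, hence is a subsolution below $\bar u$, and therefore $z\le u^*$. Thus $z=u^*$ is a solution. It is maximal among solutions in $[\underline u,\bar u]$, since every such solution is a subsolution below $\bar u$.

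I expect the main obstacle to be the justification of the Kato step when $Rg^-$ is only in $L^p$ and $g^-\notin L^p$, since Proposition \ref{lm2} requires $f\in L^p$. I would handle this by working with the truncated $g_n$, or by going through the probabilistic argument directly with $U$ on positive functions.
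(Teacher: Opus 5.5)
\textbf{Gap in Part~(1): the domination step.} You invoke (Int)(i) for the pair $(\underline u,\psi)$. But (Int)(i) is only available when both endpoints satisfy its integrability hypotheses, and for the upper endpoint this includes $Rg^-(\cdot,\psi)\in L^p(E;m)$. In Part~(1), $\psi$ is an arbitrary element of $L^p(E;m)$. (Int)(ii) gives $g^+(\cdot,\psi)\in L^p(E;m)$, but nothing controls the negative part, so the promised ``check from the setting'' cannot be carried out. For instance, $g(x,y)=-y^2$ satisfies (Int), and $\underline u=0$ solves \eqref{eq.sub} with $\gamma=0$. For $\psi\ge0$ with $R\psi^2\notin L^p(E;m)$, your dominating function $G=\psi^2$ is then useless, although the statement remains true. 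When $\psi$ is a supersolution the hypothesis does hold, which is why your Part~(2) is not affected.

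\textbf{How to repair it.} The missing idea, which is how the paper proceeds, is to obtain integrability of the negative part at $u^*$ from the subsolution identities themselves, and only afterwards to use (Int)(i) on $[\underline u,u^*]$. Since $v_n\ge0$ and $w_n\ge\underline u$, you have $Rg^-(\cdot,w_n)\le\gamma+Rg^+(\cdot,w_n)+\underline u^-$. Pass to a subsequence along which $Rg^+(\cdot,w_n)\to Rg^+(\cdot,u^*)$ a.e. Carath\'eodory continuity and Fatou's lemma for the kernel of $R$ then give $Rg^-(\cdot,u^*)\le\gamma+Rg^+(\cdot,u^*)+\underline u^-\in L^p(E;m)$. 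Now (Int)(i) applies to $(\underline u,u^*)$, and $\sup_{\underline u\le y\le u^*}g^-(\cdot,y)$ dominates $g^-(\cdot,w_n)$ with potential in $L^p(E;m)$. The rest of your Part~(1) then goes through: convergence of $v_n$, supermedianity of the limit by continuity of $T_t$, and the $\arctan$ maximality argument, extended to all subsolutions below $\psi$ via $w\mapsto w\vee\underline u$.

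\textbf{Part~(2).} This part is correct and slightly leaner than the paper's. Truncating directly on $[u^*,\bar u]$ lets a single comparison identify the solution with $u^*$. The paper instead first builds a solution in $[\underline u,\bar u]$ and then repeats the argument with $\underline u$ replaced by $u^*$. The Kato step needs Proposition~\ref{lm2} for $f$ with only $R|f|\in L^p(E;m)$. The paper obtains this by rerunning the It\^o--Meyer argument directly, which is your second option.
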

\begin{proof}
Proof of  (1). Set
$$
S_\psi=\{w: w \text { is a subsolution to  \eqref{eq.sub}  and } w \leq \psi\}
$$
By the assumptions, $S_\psi$ is nonempty and the number
$$
\alpha:=\sup _{w\in S_{\psi}} \int_E |w|^{p-1}w
$$
is finite. 
Let $(w_n) \subset S_\psi$ be such that $w_1=\underline{u}$ and  $ \int_E |w_n|^{p-1}w_n  \nearrow \alpha$. Set

$$
u_n:=\max \left\{w_1, \ldots, w_n\right\}, \quad n \geq 1 .
$$
By Proposition \ref{prop.maxsub}, $\left\{u_n\right\} \subset S_\psi$. Set $u^*=\sup _{n \geq 1} u_n$. 
Clearly, $\underline{u} \leq u^* \leq \psi$.  By the assumptions made on $\psi$ and (Int), 
we have $u^*\in L^p(E;m)$ and  $g^+\left(\cdot, u_n\right)\to g^+\left(\cdot, u^*\right)$  in $ L^p(E;m)$. 
Since $u_n$ is a subsolution to \eqref{eq.sub} there exists a supermedian $v_n \in L^p(E;m)$ such that
\begin{equation}
\label{eq.eqsubn}
u_n=\gamma+R g\left(\cdot, u_n\right)-v_n.
\end{equation}
Let $r_0(x,dy)$ be the kernel  representing operator $R$ (see \eqref{eq.rochlin})
and $N$ be the set of $x\in E$ for which $u_n(x)$ does not converge to $u^*(x)$.
We have $0=(R\mathbf1_N)(x)=r_0(x,N)$ a.e.,  so  $u_n$
converges to $u^*$ $r_0(x,\cdot)$-a.e. for a.e. $x\in E$. 
We further  observe that
\[
R g^-(\cdot,u^*)\in L^p(E;m).
\]
Indeed, by \eqref{eq.eqsubn} and monotonicity of the sequence $(u_n)$,  
\[
R g^-(\cdot,u_n)
\le
\gamma+R g^+(\cdot,u_n)+u_1^-.
\]
By \textup{(Int)(ii)} and Fatou's lemma,
\[
\|R g^-(\cdot,u^*)\|_{L^p(E;m)}^p
\le
\liminf_{n\to\infty}
\|R g^-(\cdot,u_n)\|_{L^p(E;m)}^p
<\infty.
\]
Thus $R g^-(\cdot,u^*)\in L^p(E;m)$.
Since also $g^+(\cdot,u^*)\in L^p(E;m)$, condition
\textup{(Int)(i)} may now be applied to $u_1$ and $u^*$.
Hence, setting
\[
h(x):=\sup_{u_1(x)\le y\le u^*(x)}g^-(x,y),
\]
we obtain $Rh\in L^p(E;m)$. Clearly, $|g^-(\cdot,u_n)|\le h$. Thus, by the kernel representation of the resolvent,
and the the dominated convergence theorem,
\[
\|R|g^-(\cdot,u_n)-g^-(\cdot,u^*)|\|\to 0.
\]
This, together with the convergence of $(g^+(\cdot,u_n))$, implies that $v_n$ converges in $L^1(E;m)$.
Since $\sup_{n\ge 1}\|v_n\|<\infty$,  its limit $v$ is supermedian.
Passing to the limit in \eqref{eq.eqsubn} yields
$u^* \in S_\psi$. It remains to prove that  $u^*$ is maximal. 
Let $w \in S_\psi$. Clearly $w \vee u_n \nearrow w \vee u^*$. 
By Proposition \ref{prop.maxsub}, $w \vee u_n \in S_\psi$. Thus,
\[
\alpha=\lim _{n \rightarrow \infty} \int_E |w_n|^{p-1}w_n  \leq \lim _{n \rightarrow \infty} \int_E |u_n|^{p-1}u_n  
\leq \lim _{n \rightarrow \infty}\int_E |w \vee u_n|^{p-1}(w \vee u_n)  \leq \alpha.
\]
Therefore,
\[
\int_E \left(|w \vee u^*|^{p-1}(w \vee u^*)-|u^*|^{p-1}u^* \right)=0
\]
Hence, $u^* = w \vee u^*$, which implies that $w \leq u^*$.

Proof of  (2). By Remark \ref{rem.main} we may assume that $E$ is a Lusin topological space, 
$\mathcal A=\mathcal B(E)$, and  there exists a right Markov process $\mathbb X$  that represents 
the semigroup $(T_t)$ (see Definition \ref{def.rep}). Set
\[
\hat{g}(x,y)=g(x,(y \wedge \bar{u}(x)) \vee \underline{u}(x)), \quad x \in  E,\, y \in \mathbb{R} .
\]
By condition (i) of (Int), $\hat{g}$ satisfies  all the assumptions of  Proposition \ref{prop.exist}. 
Therefore there exists a solution $\hat{u}$ to \eqref{eq.sub} with $g$ replaced by $\hat{g}$. 
Since $\underline{u}$ is a subsolution to  \eqref{eq.sub}, there exists a supermedian function  $v$ such that
\[
\underline{u}=\gamma+R g(\cdot, \underline{u})-v.
\]
We choose $\mathcal B(E)$-measurable representatives of the $m$-equivalence classes of
$\underline{u}$, $\hat u$ and  $\bar u$ and continue to denote them by the same symbols. By Lemma \ref{lm.basic}(1)
there exist $m$-versions $\tilde \gamma,\tilde v$ of $\gamma,v$, respectively, that are $\mathcal U$-excessive.
Let $\tilde{\underline{u}}:= \tilde \gamma+Ug(\cdot,\underline{u})-\tilde v$ 
on the set $\{\tilde\gamma+U|g(\cdot,\underline{u})|+\tilde v<\infty\}$ 
and zero on its complement, and
$\hat u':= \tilde\gamma+U\hat g(\cdot,\hat u)$ on the set 
$\{\tilde\gamma+U|\hat g(\cdot,\hat u)|<\infty\}$ and zero on its complement.
Set $E_0:=\{\tilde\gamma+U|\hat g(\cdot,\hat u)|+\tilde v+U|g(\cdot,\underline{u})|<\infty\}$.
By Lemma \ref{lm.basic},  the It\^o--Meyer formula 
and   arguments analogous to those leading to \eqref{eq.ttlim} (cf. the proof of Proposition \ref{prop.maxsub})
we obtain that there exists a positive additive functional $(A_t)$ such that 
$$
\begin{aligned}
(\underline{\tilde u}(x)-\hat{u}'(x))^{+} &
\le \mathbb{E}_x\left(\underline{\tilde u}\left(X_{\tau_k}\right)-\hat{u}'\left(X_{\tau_k}\right)\right)^{+}\\&
+\mathbb{E}_x \int_0^{\tau_k} \mathbf{1}_{\{\underline{\tilde u}(X)_{r-}>\hat{u}'(X)_{r-}\}}
\left(g\left(X_r, \underline{\tilde u}\left(X_r\right)\right)-\hat{g}\left(X_r, \hat{u}'\left(X_r\right)\right)\right) \,d r \\
& -\mathbb{E}_x \int_0^{\tau_k} \mathbf{1}_{\{\underline{\tilde u}(X)_{r-}>\hat{u}'(X)_{r-}\}} \,d A_r \quad \text { a.e. }
\end{aligned}
$$
for an increasing sequence $(\tau_k)$ of stopping times such that $\tau_k\to\infty$. 
Observe that, by the definition of $\hat{g}$, we have 
$\mathbf{1}_{\{\underline{\tilde u}>\hat{u}'\}}(g(\cdot, \underline{\tilde u})-\hat{g}(\cdot, \hat{u}')) \leq 0$. Thus,
$$
(\underline{\tilde u}(x)-\hat{u}'(x))^{+} \leq 
\mathbb{E}_x\left(\underline{\tilde u}\left(X_{\tau_k}\right)-\hat{u}'\left(X_{\tau_k}\right)\right)^{+} 
\quad \text { a.e. }
$$
Letting $k \rightarrow \infty$  yields
$$
(\underline{u}(x)-\hat{u}(x))^{+} \leq 0\quad\text{a.e.,}
$$
and so $\underline{u} \leq \hat{u} $. We used here the fact that 
\[
(\underline{\tilde u}(x)-\hat{u}'(x))^{+} \le (U|g(\cdot,\underline u)-\hat g(\cdot,\hat u)|)(x),\quad x\in E_0,
\]
which combined with the strong Markov property of $\mathbb X$
 and the fact that $E_0$ is absorbing (see Lemma \ref{lm.basic}(6)) gives 
\[
\mathbb{E}_x\left(\underline{\tilde u}\left(X_{\tau_k}\right)-\hat{u}'\left(X_{\tau_k}\right)\right)^{+} 
\le \mathbb E_x\int_{\tau_k}^\infty|g(X_s,\underline u(X_s))-\hat g(X_s,\hat u(X_s))|\,ds\to 0,\quad k\to \infty \quad x\in E_0.
\]
Analogous reasoning for $\hat{u}, \bar{u}$ shows that $\hat{u} \leq \bar{u}$.
 Consequently, $\hat{g}(\cdot, \hat{u})=g(\cdot, \hat{u})$. 
 Therefore, $\hat{u}$ is, in fact, a solution to \eqref{eq.sub} and $\underline{u} \leq \hat{u} \leq \bar{u}$.
 Now, we shall show the existence of a maximal solution to \eqref{eq.sub} lying between $\underline{u}$ and  $\bar{u}$. 
 Applying (1) with $\psi=\bar{u}$ gives the existence of a maximal subsolution $u^*$ to \eqref{eq.sub}
 such that $u^* \leq \bar{u}$. Since $\underline u$ is a subsolution to
 \eqref{eq.sub}, we clearly have $\underline{u} \leq u^* \leq \bar{u}$.  By what has already been  proved, 
 there exists a solution $\hat{u}$ to \eqref{eq.sub} such that $u^* \leq \hat{u} \leq \bar{u}$.
 On the other hand, since $\hat{u}$ is also  a subsolution to \eqref{eq.sub}, $\hat{u} \leq u^*$.
Thus $u^*$ is a maximal solution to \eqref{eq.sub}.
\end{proof}

\begin{proposition}  
\label{prop4.2}
Assume that $(T_t)$ is sub-Markovian and $g$ satisfies (Int). 
Let $\gamma,\underline\gamma$ be  bounded and supermedian functions. 
Let $\psi \in L^p(E;m)$ be  a supersolution to \eqref{eq.sub} 
and  $u$ be a maximal   solution to \eqref{eq.sub}  such that $u \le \psi$.
Let  $\underline{g}:E\times\mathbb R\to\mathbb R$ be a Carath\'eodory function and $\underline{u}$ be a positive solution to 
\[
w=\underline\gamma+R\underline g(\cdot,w)
\]
such that $ \underline {u} \leq \psi$.
If $\gamma-\underline \gamma$ is a supermedian function, 
and  $\underline{g}(\cdot, \underline{u}) \leq g(\cdot, \underline{u})$,  then $\underline{u} \leq u$.
\end{proposition}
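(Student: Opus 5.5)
The strategy is to show that $\underline u$ is a subsolution to \eqref{eq.sub}, the equation with data $(\gamma,g)$, that lies below $\psi$. Then the maximality statement in Theorem~\ref{th.im4} gives the comparison. By Theorem~\ref{th.im4}(2), applied with the supersolution $\psi$, there is a maximal solution between any subsolution below $\psi$ and $\psi$, and this solution is also the maximal subsolution lying below $\psi$. Since $u$ is a maximal solution below $\psi$, it coincides with the maximal subsolution $u^*$ below $\psi$ constructed in Theorem~\ref{th.im4}(1). Therefore any subsolution $w\le\psi$ satisfies $w\le u$.

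\textbf{Step 1: $\underline u$ is a subsolution.} Since $\underline u$ solves the $(\underline\gamma,\underline g)$ problem,
\[
\underline u+R\underline g^-(\cdot,\underline u)=\underline\gamma+R\underline g^+(\cdot,\underline u),
\]
and we rewrite
\[
\underline u=\gamma+Rg(\cdot,\underline u)-\bigl[(\gamma-\underline\gamma)+R\bigl(g(\cdot,\underline u)-\underline g(\cdot,\underline u)\bigr)\bigr].
\]
Both bracketed terms are supermedian. The first is supermedian by hypothesis. The second is $R$ applied to the positive function $g(\cdot,\underline u)-\underline g(\cdot,\underline u)$, and $R$ of a positive function is supermedian, via the kernel extension of $R$ to positive measurable functions.

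\textbf{Step 2: integrability bookkeeping.} The definition of subsolution requires three things.
\begin{itemize}
\item[(a)] $g^+(\cdot,\underline u)\in L^p(E;m)$. This follows from (Int)(ii), since $\underline u\in L^p(E;m)$.
\item[(b)] $Rg^-(\cdot,\underline u)\in L^p(E;m)$. Since $g\ge\underline g$ at $\underline u$, we have $g^-(\cdot,\underline u)\le\underline g^-(\cdot,\underline u)$, and $R\underline g^-(\cdot,\underline u)\in L^p(E;m)$ because $\underline u$ is a solution.
\item[(c)] The bracket is a genuine element of $L^p(E;m)$. We write $R\bigl(g(\cdot,\underline u)-\underline g(\cdot,\underline u)\bigr)$ as a combination of the four finite terms $Rg^\pm(\cdot,\underline u)$ and $R\underline g^\pm(\cdot,\underline u)$, and check that no $\infty-\infty$ arises. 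This holds because $R\underline g^+(\cdot,\underline u)=\underline u+R\underline g^-(\cdot,\underline u)-\underline\gamma$ lies in $L^p(E;m)$.
\end{itemize}
Once these hold, the displayed identity can be rearranged into the form $w+Rg^-(\cdot,w)=\gamma+Rg^+(\cdot,w)-v$ with $v$ supermedian.

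\textbf{Step 3: conclusion.} We have $\underline u\le\psi$. Applying Theorem~\ref{th.im4}(2) with $\underline u$ as the subsolution and $\psi$ as the supersolution yields a maximal solution $\hat u$ with $\underline u\le\hat u\le\psi$, and $\hat u$ is the maximal subsolution below $\psi$. The maximal solution below $\psi$ is unique, since two maximal elements of the same set of solutions coincide. Hence $u=\hat u\ge\underline u$.

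I expect the main obstacle to be Step 2, namely making rigorous that the difference term is a finite supermedian function in $L^p(E;m)$ when $\underline g^-(\cdot,\underline u)$ need not be in $L^p$. I would handle it through the kernel representation and the finiteness of $R\underline g^\pm(\cdot,\underline u)$, using the bound $0\le R(g-\underline g)(\cdot,\underline u)\le R\underline g^-(\cdot,\underline u)+Rg^+(\cdot,\underline u)$. A secondary point is the precise meaning of ``maximal solution'' in the statement. If it is read as maximal among solutions below $\psi$, Step 3 needs the identification $u=u^*$ from Theorem~\ref{th.im4}(2), which holds because $u^*$ is itself a solution dominating every solution below $\psi$.
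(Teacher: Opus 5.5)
Your proposal is correct and follows essentially the paper's proof. Both rewrite the equation to show that $\underline u$ is a subsolution of the $(\gamma,g)$-problem, using (Int)(ii) and $g^-(\cdot,\underline u)\le\underline g^-(\cdot,\underline u)$, and then identify $u$ with the greatest subsolution below $\psi$.

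The only difference is in that identification. The paper argues by contradiction, applying Proposition~\ref{prop.maxsub} to $u\vee w$ together with Theorem~\ref{th.im4}, whereas you use the ``moreover'' clause of Theorem~\ref{th.im4}(2) directly. Your justification should be the one in your last paragraph: $u^*$ is a \emph{greatest} element, so any maximal solution below $\psi$ equals it. It is not true in general that two maximal elements of a partially ordered set coincide.
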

\begin{proof}
Observe that 
\[
\underline u=\gamma+Rg(\cdot,\underline u)-(\gamma-\underline\gamma)
-(Rg(\cdot,\underline u)-R\underline g(\cdot,\underline u)).
\]
By the assumptions, $\gamma-\underline\gamma$ is a supermedian function, and  
$g(\cdot, \underline{u})-\underline{g}(\cdot, \underline{u})\ge 0$. 
Thus $g^-(\cdot, \underline{u})\le \underline{g}^-(\cdot, \underline{u})$, and so 
$Rg^-(\cdot, \underline{u})\in L^p(E;m)$. By (Int), $g^+(\cdot, \underline{u}) \in L^p(E;m)$. 
Consequently, $R[g(\cdot, \underline{u})-\underline{g}(\cdot, \underline{u})]$
is a supermedian function and  $\underline{u}$ is a subsolution to \eqref{eq.sub}.
By Theorem \ref{th.im4}(2), $u$ is a maximal subsolution to \eqref{eq.sub} such that $u\le \psi$.
Indeed, otherwise there exists a subsolution $w\le\psi$ of \eqref{eq.sub} such that $m(\{w>u\})>0$. 
By Proposition \ref{prop.maxsub}, the function $\hat w:=u\vee w$ is a subsolution of \eqref{eq.sub}
and satisfies $u<\hat w\le\psi$. Hence, by Theorem \ref{th.im4}, there exists a solution $\hat u$ such that $\hat w\le\hat u\le\psi$, 
contradicting the maximality of $u$. Consequently, since $u$ is a maximal subsolution to \eqref{eq.sub}, we obtain
that   $\underline{u} \leq u$.
\end{proof}

\section{Main result:  sub-Markovian case}
\label{sec6}

\begin{definition}
\label{def.main}
 We call $u\in L^p(E;m)$ a solution to \eqref{eq1.1} if 
\[
u+Rf^-(\cdot,u)=Rf^+(\cdot,u).
\]
\end{definition}
This definition is tailored to the fact that (F1) guarantees $f^+(\cdot,u)\in L^p(E;m)$ and hence $Rf^+(\cdot,u)\in L^p(E;m)$, 
whereas  $f^-(\cdot,u)$ need not belong to $L^p(E;m)$ in the present general setting.
Let us recall that conditions (F1)--(F3) as well as the notion of  $\lambda_1(a_0)$,  $\lambda_1(a_\infty)$ have been 
introduced in Section \ref{sec.prelim}.

\begin{proposition}
\label{prop.shifted-equation}
Let $\gamma$ be a bounded supermedian function,
$\kappa\geq 0$, 
\[
L_\kappa:=L-\kappa I,
\qquad
f^\kappa(x,y):=f(x,y)+\kappa y,
\]
and    $u\in L^p(E;m)$ be a positive function. Then  $u$ is a solution to
\begin{equation}
\label{eq.res123}
u=\gamma+Rf(\cdot,u)
\end{equation}
if and only if it is a solution to
\begin{equation}
\label{eq.res1234}
u=\gamma-\kappa R_\kappa\gamma+R_\kappa f^\kappa(\cdot,u).
\end{equation}
\end{proposition}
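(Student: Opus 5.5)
The plan is to derive the equivalence from the resolvent identity $R=R_\kappa+\kappa R_\kappa R$ (valid since $s(L)<0\le\kappa$). Both equations must be read in the separated sense adopted throughout: \eqref{eq.res123} means $u+Rf^-(\cdot,u)=\gamma+Rf^+(\cdot,u)$, and \eqref{eq.res1234} means
\[
u+R_\kappa (f^\kappa)^-(\cdot,u)=\gamma-\kappa R_\kappa\gamma+R_\kappa (f^\kappa)^+(\cdot,u).
\]
Because $u\ge0$ and $\kappa\ge0$, the identity $f^\kappa=f+\kappa u$ gives $(f^\kappa)^+\le f^++\kappa u$ and $(f^\kappa)^-\le f^-$. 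The first step is to record that the resolvent identity extends, via the kernel representation \eqref{eq.rochlin} and monotone convergence, to all positive measurable functions, including possibly infinite values. In particular, for positive measurable $h$ we have $Rh=R_\kappa h+\kappa R_\kappa Rh$ pointwise a.e. in $[0,\infty]$. This also gives $R_\kappa h\le Rh$, so $Rh\in L^p$ implies $R_\kappa h\in L^p$.

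For the forward implication, assume \eqref{eq.res123}. Then $Rf^-(\cdot,u)\in L^p$, hence $R_\kappa f^-(\cdot,u)\in L^p$, and $u=\gamma+Rf^+(\cdot,u)-Rf^-(\cdot,u)$ is a genuine difference of $L^p$ functions. Applying $\kappa R_\kappa$ to this identity gives
\[
\kappa R_\kappa u=\kappa R_\kappa\gamma+\kappa R_\kappa Rf^+-\kappa R_\kappa Rf^-.
\]
By the resolvent identity the right side equals $\kappa R_\kappa\gamma+(R-R_\kappa)f^+-(R-R_\kappa)f^-$. Substituting back into \eqref{eq.res123} yields $u=\gamma-\kappa R_\kappa\gamma+R_\kappa f(\cdot,u)+\kappa R_\kappa u$, where every term lies in $L^p$. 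Writing $f+\kappa u=(f^\kappa)^+-(f^\kappa)^-$ and using that $R_\kappa$ is linear on functions with $R_\kappa|\cdot|$ finite, we obtain \eqref{eq.res1234} in the separated form. Here $R_\kappa(f^\kappa)^-(\cdot,u)\le R_\kappa f^-(\cdot,u)\in L^p$.

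For the converse, assume \eqref{eq.res1234}. The main obstacle is to show that $Rf^-(\cdot,u)\in L^p$, since a priori we only control $R_\kappa(f^\kappa)^-$. I would proceed as follows.
\begin{itemize}
\item[(a)] Since $f^-(\cdot,u)\le(f^\kappa)^-(\cdot,u)+\kappa u$, we get $R_\kappa f^-(\cdot,u)\in L^p$. Hence $u=\gamma-\kappa R_\kappa\gamma+R_\kappa f^+(\cdot,u)-R_\kappa f^-(\cdot,u)+\kappa R_\kappa u$, with all terms in $L^p$.
\item[(b)] Apply $R$ to the positive part of this identity. Rearranged so that all terms are positive, it reads
\[
u+R_\kappa f^-(\cdot,u)+\kappa R_\kappa\gamma=\gamma+R_\kappa f^+(\cdot,u)+\kappa R_\kappa u.
\]
Apply $\kappa R$ to both sides; this is legitimate in $[0,\infty]$ pointwise by the kernel representation.
\item[(c)] Use $\kappa RR_\kappa h=Rh-R_\kappa h$. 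The right side becomes $\kappa R\gamma+(R-R_\kappa)f^++(R-R_\kappa)\kappa u$, which is finite and in $L^p$ because $f^+(\cdot,u)\in L^p$ by (F1), $u,\gamma\in L^p$, and $R$ is bounded. Consequently $\kappa(R-R_\kappa)f^-(\cdot,u)\le$ an $L^p$ function, so $Rf^-(\cdot,u)\in L^p$ (for $\kappa=0$ there is nothing to prove).
\end{itemize}
With this integrability in hand, all the manipulations are between finite $L^p$ functions. Reversing the computation of the forward direction then gives $u+Rf^-(\cdot,u)=\gamma+Rf^+(\cdot,u)$, that is, \eqref{eq.res123}.
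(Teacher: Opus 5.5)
Your proposal is correct and follows the same route as the paper: both directions come from the resolvent identities $R=R_\kappa+\kappa R_\kappa R=R_\kappa+\kappa RR_\kappa$, and your extra bookkeeping (extending these identities to positive measurable functions via the kernel, and checking that $Rf^-(\cdot,u)\in L^p$ in the converse) makes rigorous what the paper's formal computation leaves implicit. Step (c) can be shortened: once (a) gives $R_\kappa f^-(\cdot,u)\in L^p$, the extended identity $Rf^-(\cdot,u)=R_\kappa f^-(\cdot,u)+\kappa R\bigl(R_\kappa f^-(\cdot,u)\bigr)$ already yields $Rf^-(\cdot,u)\in L^p$, because $R$ is bounded on $L^p$.
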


\begin{proof}
The resolvent identity gives
\begin{equation}\label{eq.resolvent-shift-forms}
R_\kappa+\kappa R_\kappa R=R,
\qquad
R_\kappa+\kappa RR_\kappa=R.
\end{equation}
Assume first that $u$ satisfies \eqref{eq.res123}. 
Using 
\eqref{eq.resolvent-shift-forms}, we obtain
\[
\begin{split}
R_\kappa f^\kappa(\cdot,u)&=R_\kappa f(\cdot,u)+\kappa R_\kappa u=R_\kappa f(\cdot,u)
+\kappa R_\kappa Rf(\cdot,u)+\kappa R_\kappa\gamma
\\&=\bigl(R_\kappa+\kappa R_\kappa R\bigr)f(\cdot,u)
+\kappa R_\kappa\gamma=Rf(\cdot,u)+\kappa R_\kappa\gamma=u-\gamma+\kappa R_\kappa\gamma.
\end{split}
\]
So $u$ satisfies \eqref{eq.res1234}. Conversely, assume that $u$ satisfies \eqref{eq.res1234}. 
Using 
\eqref{eq.resolvent-shift-forms}, we get
\[
\begin{split}
Rf(\cdot,u)
&=R(f^\kappa(\cdot,u)-\kappa u)=Rf^\kappa(\cdot,u)-\kappa Ru=
\bigl(R_\kappa+\kappa RR_\kappa\bigr)f^\kappa(\cdot,u)-\kappa Ru\\&
=u-\gamma+\kappa R_\kappa\gamma+\kappa R(u-\gamma+\kappa R_\kappa\gamma)-\kappa Ru=u-\gamma.
\end{split}
\]
Hence $u$ satisfies  \eqref{eq.res123}.
\end{proof}

\begin{theorem}
\label{th.mainm}
Assume that $(T_t)$ is sub-Markovian and (F1)-(F2) and (F3)$_1$. If 
\[
\lambda_1(a_0)<0< \lambda_1(a_\infty),
\]
then there exists a strictly positive   solution to \eqref{eq1.1}.
\end{theorem}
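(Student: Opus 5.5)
The plan is to use the sub- and supersolution machinery of Section~\ref{sec5} with $\gamma$ a small positive constant. Note that $\gamma\equiv\varepsilon$ is supermedian because $(T_t)$ is sub-Markovian. We first solve the perturbed problems $u=\varepsilon+Rf(\cdot,u)$ and then let $\varepsilon\downarrow 0$.

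\textbf{Barriers.} Since $\lambda_1(a_\infty)>0$, fix $k$ with $\mu:=\lambda_1(-L-a_\infty^k)>0$. Choose $\delta\in(0,\mu)$. By the definition of $a_\infty$, the function
\[
h_\delta(x):=\sup_{y\ge 0}\bigl(f(x,y)-(a_\infty^k(x)+\delta)y\bigr)^+
\]
is finite a.e. By (F1) it is dominated by $h+\lambda y$ on bounded $y$-ranges, so after a truncation argument we may take $h_\delta\in L^p(E;m)$. Then $f(x,y)\le (a^k_\infty+\delta)y+h_\delta$ for $y\ge0$, and $(a_\infty^k+\delta)^+\le\lambda+\delta$ is bounded.

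For the supersolution we set
\[
\bar u:=(-L-a_\infty^k-\delta)^{-1}(h_\delta+1).
\]
This operator is invertible and positive, because $\lambda_1(-L-a_\infty^k-\delta)=\mu-\delta>0$. A computation with the resolvent identity, in the spirit of Proposition~\ref{prop.shifted-equation}, shows that $\bar u-\varepsilon-Rf(\cdot,\bar u)$ equals $R$ applied to a nonnegative function for all $\varepsilon$ small. So $\bar u$ is a supersolution.

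For the subsolution, fix $k$ with $\lambda_1(-L-a_0^k)<0$. Take a bounded truncation $b\le a^k_0$ with $\lambda_1(-L-b)<-\eta<0$, and let $\varphi\gg0$ be the corresponding principal eigenfunction. The subsolution is $\underline u=t\varphi$ for small $t$. Condition (F3)$_1$ (with $\psi=1$) together with the $\liminf$ definition of $a_0$ give $f(x,t\varphi)\ge (b-\eta)t\varphi$ on a large set. Near zero this has to be arranged uniformly, so I would cap $\varphi$ using Lemma~\ref{lm1}, replacing it by a concave function of a supermedian majorant, or work on sets $\{\varphi\le M\}$. With this, $t\varphi$ is a subsolution, whose defect is supermedian since it is $R$ of a positive function. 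Finally, $\underline u\le\bar u$ for small $t$. This comparison needs boundedness of $\varphi$, or else comparison via Proposition~\ref{lm2}. I expect this step to be the main obstacle: $\varphi$ need not be bounded, since there is no ultracontractivity. My first attempt would be to use $t(\varphi\wedge 1)$ and show it is still a subsolution by the concavity and Kato-type argument of Proposition~\ref{prop.if1}, which turns $\varphi\wedge1$ into $\varphi$ minus a supermedian correction.

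\textbf{Solving and passing to the limit.} To verify (Int) we truncate $f$ from below by $-C(1+\cdot)$, using (F3)$_1$, and from above by (F1). Theorem~\ref{th.im4}(2) then gives a maximal solution $u_\varepsilon$ with $t\varphi\le u_\varepsilon\le\bar u$. By Proposition~\ref{prop4.2}, $u_\varepsilon$ is monotone in $\varepsilon$. Hence $u_\varepsilon\downarrow u$ with $t\varphi\le u\le\bar u$, so $u\gg0$ and $u\in L^p$.

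To pass to the limit we use the two monotone families:
\begin{itemize}
\item $Rf^+(\cdot,u_\varepsilon)\to Rf^+(\cdot,u)$ in $L^p$, by domination from (F1) and compactness of $R$.
\item $Rf^-(\cdot,u_\varepsilon)$ converges a.e. by Theorem~\ref{lm.upseq}, and the limit equals $Rf^-(\cdot,u)$ by Fatou's lemma together with the identity $Rf^-=\varepsilon+Rf^+-u_\varepsilon$.
\end{itemize}
Letting $\varepsilon\to0$ then yields $u+Rf^-(\cdot,u)=Rf^+(\cdot,u)$, so $u$ is a strictly positive solution.
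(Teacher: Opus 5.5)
Your route differs from the paper's. You build explicit barriers $t\varphi\le u\le\bar u$ from principal eigenfunctions of $-L-b$ and $-L-a_\infty^k-\delta$, while the paper uses only trivial barriers and brings in both spectral conditions through blow-up arguments. The step you flagged is a genuine gap, and the proposed remedy goes the wrong way. Since $\varphi$ may be unbounded and the $\liminf$ defining $a_0$ is not uniform in $x$, $t\varphi$ is not a subsolution in general. Capping by a concave $\rho$ does not help. Write $\varphi=Rg$ with $g=(b+\lambda_1(-L-b))\varphi$. Proposition~\ref{prop.if1} applied to the convex function $-\rho$ gives $\rho(\varphi)=\rho(0)+R(\rho'(\varphi)g)+v$ with $v$ supermedian. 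So the correction has the \emph{supersolution} sign; this is exactly how the paper uses it in Step~2 to show that $u_k\wedge\delta$ is supermedian. Equivalently, $\varphi\wedge1=\varphi-(\varphi-1)^+$, and $(\varphi-1)^+$ is not supermedian: the minimum of a subsolution and a constant is in general not a subsolution. Hence nothing in your argument validly uses $\lambda_1(a_0)<0$, and nothing prevents $u_\varepsilon\downarrow0$.

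The paper avoids a positive lower barrier altogether. After the shift that makes $f\ge0$ on $[0,\delta]$, it uses $h_n(\phi_1)=1/(n(1+\phi_1))\le 1/n$. This is a \emph{convex} function of $\phi_1$, so the sign in Proposition~\ref{prop.if1} is favourable. Nontriviality of the limit is then proved by contradiction. If $c_n=\|u_{k,n}\|\to0$, the normalized $u_{k,n}/c_n$ converge by compactness to some $w_k$ with $\|w_k\|=1$. Pairing with the dual principal eigenfunction of $-L-a_0^l$, after rewriting the equation with $R_\theta$ so that the weight $a_0^l+\lambda_1(a_0^l)+\theta$ is nonnegative, yields $\lambda_1(a_0^l)\langle w_k,\varphi_1'\rangle\ge0$, a contradiction. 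Strict positivity then follows because $u_k\wedge\delta$ is supermedian and the resolvent is positivity improving.

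The second gap is the treatment of $f^-$. By (F3)$_1$, $f^-$ is controlled only on bounded $y$-ranges, while $\bar u$ is unbounded. A supersolution must satisfy $Rf^-(\cdot,\bar u)\in L^p(E;m)$, and (Int)(i) must hold for $f$. Neither follows from the hypotheses. Truncating $f$ from below by $-C(1+y)$ changes the equation wherever $u$ is large, and you never undo it. Moreover, as $\varepsilon\to0$, Fatou's lemma gives only $u+Rf^-(\cdot,u)\le Rf^+(\cdot,u)$, not equality. The missing ingredient is the Kato-type tail estimate of Proposition~\ref{lm2}: $R(\mathbf 1_{\{u_k>M\}}f_k^-(\cdot,u_k))\le R(\mathbf 1_{\{u_k>M\}}f_k^+(\cdot,u_k))$. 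The paper combines this with dominated convergence on $\{u_k\le M\}$ in Step~3 to obtain equality.

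There are also two smaller defects in $\bar u$. First, $h_\delta\in L^p$ is not automatic. For $f(x,y)=\lambda\min\{y,Y(x)\}$ with $Y\notin L^p(E;m)$ one gets $h_\delta=(\lambda-\delta)Y$, so the slope has to be modified on small sets. Second, $\bar u-\varepsilon-Rf(\cdot,\bar u)$ contains the term $R1-\varepsilon$, which is not $R$ of a nonnegative function, so you should take $\bar u+\varepsilon$ instead. With these repairs the upper barrier can presumably serve as an alternative to the paper's blow-up at infinity. The lower bound, however, needs the blow-up idea or an equivalent.
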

\begin{proof}
Fix $\delta>0$. By (F3)$_1$, Proposition \ref{prop.shifted-equation} and Remark \ref{rem.shift1}, 
we may assume without loss of generality that
$f(x,y)>0$ for $x\in E$ and $y\in(0,\delta]$, and $R1\le 1$.  Fix $l\ge 1$ such  that $\lambda_1(a_0^l)<0<\lambda_1(a_\infty^l)$. 

{\bf Step 1.} Let us define 
\begin{equation}
\label{eq.fkn}
f_{k,n}(x,y):= \min\{f^+(x,y)\wedge k, ky\}-f^-(x,y)\wedge n.
\end{equation}
Observe that 
\begin{enumerate}
\item[(i)] $f^-_{k,n}(x,y)=0,\, x\in E,\, y\in [0,\delta]$;
\item[(ii)] $f^-_{k,n}(x,y)\le n,\, x\in E,\, y\ge 0$ and $f^+_{k,n}(x,y)\le k,\, x\in E,\, y\ge 0$;
\item[(iii)] $f^+_{k,n}(x,y)/y\le k,\, x\in E,\, y> 0$;
\item[(iv)]  $f^-_{k,n}(x,y)/y\le \delta^{-1} f^-_{k,n}(x,y),\, x\in E,\, y> 0$.
\end{enumerate}
Set 
\begin{equation}
\label{eq.hn}
h_n(y):=\frac{1}{n(1+y)},\quad y\ge 0.
\end{equation}
Observe that $h_n$ is decreasing and convex on $[0,\infty)$. Furthermore, $h_n(0)=1/n$.
Let $\phi_1$ be a principal eigenfunction of $-L$ with the principal eigenvalue $\lambda_1>0$.
Then, by Proposition \ref{prop.if1}, there exists a supermedian function $v_n$ such that
\[
h_n(\phi_1)=\frac1n+\lambda_1R(h'_n(\phi_1)\phi_1)-v_n.
\]
Hence
\[
h_n(\phi_1)=\frac1n+R(f_{k,n}(\cdot,h_n(\phi_1)))+\left[ \lambda_1R(h'_n(\phi_1)\phi_1)-v_n-R(f_{k,n}(\cdot,h_n(\phi_1)))\right].
\]
Since $h_n(\phi_1)\le \frac1n\le \delta$ for $n\ge n_0:=
1+ [1/\delta]$, we obtain, by using (i),  that for such $n$ the function $h_n(\phi_1)$
is a strictly positive subsolution to
\begin{equation}
\label{eq.pkn}
u=\frac1n+R(f_{k,n}(\cdot,u)).
\end{equation}
Using (ii), one readily verifies that the  function $\psi_k:= 1+Rk$ is a supersolution to the above equation. 
Clearly,   $h_n(\phi_1)\le \psi_k$. Consequently, by Theorem \ref{th.im4}, there exists a maximal solution
$u_{k,n}$ to \eqref{eq.pkn}   lying between $h_n(\phi_1)$ and $\psi_k$. 
Clearly, in view of Theorem  \ref{th.im4}, $u_{k,n}$ is a maximal solution to  \eqref{eq.pkn}  such that $u_{k,n}\le \psi_k$.
Observe that
\begin{equation}
\label{eq4.comp11}
 u_{k,n}\ge u_{k,n+1},\quad u_{k,n}\le u_{k+1,n},\quad k\ge 1,\, n\ge n_0.
\end{equation}
Indeed, applying Proposition \ref{prop4.2} with $(\underline u,\underline g,\underline \gamma)= (u_{k,n+1},f_{k,n+1},1/(n+1))$,
 $(u,g,\gamma)= (u_{k,n},f_{k,n},1/n)$ and $\psi:=\psi_k$ gives the left-hand side inequality in \eqref{eq4.comp11}, while 
applying  the same proposition with 
$(\underline u,\underline g,\underline \gamma)= (u_{k,n},f_{k,n},1/n)$,
 $(u,g,\gamma)= (u_{k+1,n},f_{k+1,n},1/n)$ and $\psi=\psi_{k+1}$ gives the right-hand side inequality. 
In particular 
\[
\|u_{k,n}\|_\infty\le 1+k,\quad n\ge n_0,
\]
and  $(u_{k,n})_{n\ge n_0}$ converges in $L^p(E;m)$.
Using  (F3) and (ii) we deduce that $u_k:=\lim_{n\to \infty} u_{k,n}$ satisfies
\[
u_k=R(f_{k}(\cdot,u_k)),\quad k\ge 1,
\]
where $f_{k}(x,u):= \min\{f^+(x,u)\wedge k, ku\}-f^-(x,u).$ 

{\bf Step 2.} We  prove that $u_k$ is strictly positive for $k\ge l$. 
Fix  $\theta>0$
such that $a^l_0+\lambda_1(a_0^l)+\theta\ge 0$ in $E$.
Suppose that $c_n:=\|u_{k,n}\|\searrow 0$.
Then, letting $w_{k,n}:= u_{k,n}/c_n$, we get
\[
w_{k,n}=\frac{1}{nc_n}+R(f_{k,n}(\cdot,u_{k,n}))/c_n
\]
By (iii) and (iv),
\[
|f_{k,n}(x,u_{k,n}(x))/c_n|\le |w_{k,n}(x)|(k+\delta^{-1}\sup_{0\le y\le k+1}f^-(x,y)),
\]
which, combined with the compactness of the resolvent  and the fact that $\|w_{k,n}\|=1$,
implies that, along a  subsequence, $(R(f_{k,n}(\cdot,u_{k,n}))/c_n)_{n\ge n_0}$ converges in $L^p(E;m)$.
This and boundedness of the sequence $(\|w_{k,n}\|)_{n\ge n_0}$ yield boundedness of the sequence 
$(\frac{1}{nc_n})_{n\ge n_0}$.
As a result, 
we deduce that $(w_{k,n})_{n\ge n_0}$, along a  subsequence, converges in $L^p(E;m)$
to a function that we denote by $w_k$.  By Proposition \ref{prop.shifted-equation} 
\[
w_{k,n}=\frac{1}{nc_n}-\theta R_\theta\frac{1}{nc_n} +R_\theta(f_{k,n}(\cdot,u_{k,n})+\theta u_{k,n})/c_n.
\]
Let $\varphi_1'$ be a dual principal eigenfunction of $-L-a_0^l$. Then, for $k\ge l$,
\[
\begin{split}
\left\langle w_{k,n},a_0^l\varphi_1'+\lambda_1(a_0^l)\varphi_1'+\theta\varphi_1'\right\rangle&
=\left\langle \frac{1}{nc_n}-\theta R_\theta\frac{1}{nc_n}, a_0^l\varphi_1'+\lambda_1(a_0^l)\varphi_1'
+\theta\varphi_1'\right\rangle \\&\quad
+\left\langle R_\theta(f_{k,n}(\cdot,u_{k,n})+\theta u_{k,n})/c_n, a_0^l\varphi_1'
+\lambda_1(a_0^l)\varphi_1'+\theta \varphi_1'\right\rangle 
\\&\ge \left\langle R_\theta(f_{k,n}(\cdot,u_{k,n})+\theta u_{k,n})/c_n, a_0^l\varphi_1'+\lambda_1(a_0^l)\varphi_1'
+\theta \varphi_1'\right\rangle\\&
=\left\langle f_{k,n}(\cdot,u_{k,n})/c_n+\theta w_{k,n}, \varphi_1'\right\rangle \\&=
\left\langle w_{k,n}f_{k,n}(\cdot,u_{k,n})/u_{k,n}+\theta w_{k,n}, \varphi_1'\right\rangle.
\end{split}
\]
Applying $\liminf_{n\to \infty}$ to both  sides of the above inequality and using the definition of $a_0^l$ yields
\[
\langle w_{k},a_0^l\varphi_1'+\lambda_1(a_0^l)\varphi_1'
+\theta \varphi_1'\rangle\ge \langle w_{k}a_0^l+\theta w_k, \varphi_1'\rangle,
\]
which implies that $\lambda_1(a_0^l) \langle w_{k},\varphi_1'\rangle\ge0$, a contradiction.

Now suppose that $c_n\searrow c>0$. Then $u_k$ is nontrivial. We shall show, by using the positivity-improving
 property of the resolvent,
that in this case $u_k$ must be  strictly positive for any $k\ge l$. Aiming for a contradiction, suppose that 
$m(\{u_k=0\})>0$.
Let 
\[
\rho(x):=x\wedge \delta,\quad \rho_n(x):= n\int_0^{1/n} \rho(x+y)\,dy,\, n\ge 1.
\]
Note that  $\rho_n\in C^1_b([0,\infty))$ is  positive, non-decreasing and concave
on $[0,\infty)$. Furthermore,  $\rho_n(x)\searrow \rho(x)$ and $\mathbf1_{[0,\delta-1/n]}(x)\le \rho_n'(x)\le \mathbf1_{[0,\delta]}(x)$.
By Proposition \ref{prop.if1} (applied to $-\rho_n$) there exists a supermedian function $v_n$ such that 
\[
\rho_n(u_k)=\rho_n(0)+R(\rho_n'(u_k)f_k(\cdot,u_k))+v_n.
\]
Since
\[
\{\rho_n'\ne0\}\subset[0,\delta],
\qquad
f_k(x,y)\ge0
\quad\text{for }y\in[0,\delta],
\]
it follows that $\rho_n(u_k)$ is supermedian for every $n\ge1$. Passing to the limit yields that $u_k\wedge\delta$ is supermedian. 
Hence
\[
R_1(u_k\wedge\delta)\le u_k\wedge\delta.
\]
Since  $u_k\wedge\delta\not\equiv0$, the positivity-improving property of the resolvent gives
\[
R_1(u_k\wedge\delta)\gg0,
\]
and therefore $u_k\wedge\delta\gg0$, contrary to
$m(\{u_k=0\})>0$. This proves that $u_k$ is strictly positive.

{\bf Step 3. Letting $k\to \infty$.} 
We first note   that by \eqref{eq4.comp11},
\begin{equation}
\label{eq4.comp12}
u_{k}\le u_{k+1},\quad k\ge l.
\end{equation}
Let $\varphi'_1$ be a dual principal eigenfunction of  $-L-a_\infty^l$. 
Suppose that $\sup_{k\ge 1}\|u_k\|<\infty$. Then $u:=\lim_{k\to \infty} u_k$
is the desired solution. To see this, let $M>0$.  By Kato's inequality (Proposition \ref{lm2}) 
\[
0\le (u_k-M)^+\le R(\mathbf1_{\{u_k>M\}}f_k(\cdot,u_k)).
\]
Consequently,
\[
R(\mathbf1_{\{u_k>M\}}f^-_k(\cdot,u_k))\le R(\mathbf1_{\{u_k>M\}}f^+_k(\cdot,u_k)).
\]
Thus, by (F1),
\[
\begin{split}
\theta(M,\eta)&:=\limsup_{k\to \infty}\int_{\{u_k> M\}} f^-_k(\cdot,u_k) R^*\eta
\\&\le \limsup_{k\to \infty} \int_{\{u_k>M\}} f^+_k(\cdot,u_k) R^*\eta\le  \int_{\{u> M\}} (h+\lambda u) R^*\eta.
\end{split}
\]
As a result, for any positive $\eta\in L^{p'}(E;m)$, $\lim_{M\to\infty} \theta(M,\eta)=0$. 
Let us rewrite the equation for $u_k$ in the following form
\[
\int_{ E} u_k\eta+\int_{\{u_k\le M\}} f^-_k(\cdot,u_k) R^*\eta+\int_{\{u_k> M\}} f^-_k(\cdot,u_k) R^*\eta
=\int_{ E} f^+_k(\cdot,u_k) R^*\eta.
\]
We now take   $\limsup_{k\to \infty}$ on both sides of the equality. Then applying the dominated convergence theorem
to the second term on the left-hand side (with the aid of (F3)) and the term on the right-hand side (with the aid of (F1))   yields
\[
\int_{ E} u\eta+\int_{\{u\le M\}} f^-(\cdot,u) R^*\eta+\theta(M,\eta)=\int_{ E} f^+(\cdot,u) R^*\eta
\]
for any positive $\eta\in L^{p'}(E;m)$.   Now, letting $M\to \infty$ gives that $u$ is a solution to \eqref{eq1.1}.

Suppose now that $\sup_{k\ge 1}\|u_k\|=\infty$.
Set $c_k:= \|u_k\|$  and $w_k:= c_k^{-1} u_k$.
There exists a subsequence such that  $(w_k)$  converges a.e. 
Indeed, 
\[
w_k=R[f^+_k(\cdot,u_k)/c_k]-R[f^-_k(\cdot,u_k)/c_k],
\]
and both terms on the right-hand side are supermedian. Consequently, by Deny's theorem (see Theorem  \ref{lm.upseq}),
both sequences $(R[f^+_k(\cdot,u_k)/c_k])_{k\ge 1}$, $(R[f^-_k(\cdot,u_k)/c_k])_{k\ge 1}$  
converge  a.e. after passing to a subsequence.
Furthermore, 
\[
R[f^+_k(\cdot,u_k)/c_k]\le  R[h/c_k+\lambda u_k/c_k]\le  Rh+\lambda Rw_k,\quad c_k\ge 1,
\]
and this  implies  that the limit of $(R[f^+_k(\cdot,u_k)/c_k])_{k\ge 1}$ is finite a.e., 
which in turn allows us to conclude that $(w_k)$ converges a.e. We denote its limit by $w$.
Since
\[
w_k\le R[f^+_k(\cdot,u_k)/c_k]\le  Rh+\lambda Rw_k,\quad c_k\ge 1,
\]
we conclude, by using compactness of the resolvent, that $(w_k^p)$ is uniformly integrable.
Thus $w_k\to w$ in $L^p(E;m)$. Hence $\|w\|=1$, and consequently $w$ is nontrivial.
We have 
\[
\langle w_k,-L^*\varphi'_1\rangle=\langle c_k^{-1}f_k(\cdot,u_k),\varphi_1'\rangle.
\]
Let  $A:= \{\sup_{k} u_k=\infty\}$. 
By  (F1) and the convergence of $(w_k)$ the sequence 
$(c_k^{-1}f^+_k(\cdot,u_k)\varphi_1')$ is uniformly integrable and dominates the sequence $(c_k^{-1}f_k(\cdot,u_k)\varphi_1')$. 
Therefore, we may apply Fatou's lemma to the right-hand side of the above equality and obtain
\[
\begin{split}
\limsup_{k\to \infty}\langle c_k^{-1}f_k(\cdot,u_k),\varphi_1'\rangle  
& \le 
\int_E \limsup_{k\to \infty}\left(c_k^{-1}f_k(\cdot,u_k)\varphi_1' \right)
\\&=\int_E \limsup_{k\to \infty} \left(w_k f_k(\cdot,u_k)/u_k\right)\varphi_1' 
\\&
\le \langle \mathbf1_Aw a^l_\infty,\varphi_1'\rangle \le \langle w a_\infty^l,\varphi_1'\rangle
\end{split}
\]
since $A^c\subset \{w=0\}$. Therefore
\[
\langle w,a_\infty^l\varphi'_1+\lambda_1(a_\infty^l)\varphi'_1\rangle
=\langle w,-L^*\varphi'_1\rangle\le \langle w a_\infty^l,\varphi_1'\rangle.
\]
As a result
\[
\lambda_1(a_\infty^l)\langle w,\varphi'_1\rangle\le 0,
\]
which implies, since $\varphi_1'$ is strictly positive and $w$ is positive nontrivial, that $\lambda_1(a_\infty^l)\le 0$, a contradiction.
\end{proof}

\begin{remark}
\label{rem.ind1}
Let us point out that the solution constructed in the proof of
Theorem~\ref{th.mainm} is not, a priori, independent of the shift used
at the beginning of the proof. Indeed, any shift
$(L_\kappa,f^\kappa)$, in the notation of
Proposition~\ref{prop.shifted-equation}, with $\kappa\ge C_\delta\vee1$
may be used. The proof then proceeds with $R$ replaced by $R_\kappa$
and $f$ replaced by $f^\kappa$. In particular, the approximating
solution $u_{k,n}^{(\kappa)}$ satisfies
\begin{equation}
\label{eq.pkndep}
u
=
\frac1n+
R_\kappa\left(
f^\kappa_{k,n}(\cdot,u)
\right).
\end{equation}
Equivalently, by the resolvent identity, it solves 
\[
u=\frac1n+R\left(f^\kappa_{k,n}(\cdot,u)-\kappa u+\frac{\kappa}{n}\right).
\]
Thus the approximating problems themselves depend on $\kappa$.
Consequently, the construction given in the proof does not by itself
show that the limiting solution is independent of the chosen shift.
\end{remark}

\begin{lemma}
\label{lm.shiftmon}
Let $1\le \kappa_1\le \kappa_2$ be two admissible shifts used in the proof of
Theorem~\ref{th.mainm}. For $i=1,2$, let
\[
u_k^{(\kappa_i)}
:=
\lim_{n\to\infty}u_{k,n}^{(\kappa_i)},
\]
where $u_{k,n}^{(\kappa_i)}$ is the maximal solution constructed in
Step~1 of the proof of Theorem~\ref{th.mainm} with $R$ replaced by
$R_{\kappa_i}$ and $f$ replaced by
\[
f^{\kappa_i}(x,y):=f(x,y)+\kappa_i y.
\]
Then, for every $k\ge1$,
\[
u_k^{(\kappa_2)}\le u_k^{(\kappa_1)}.
\]
\end{lemma}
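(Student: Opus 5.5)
The plan is to compare the approximating solutions at the level of fixed $(k,n)$ and then let $n\to\infty$. Specifically, I will show that
\[
u_{k,n}^{(\kappa_2)}\le u_{k,n}^{(\kappa_1)},\qquad n\ge n_0,
\]
and then pass to the limit in $L^p(E;m)$ (the sequences converge by Step~1 of the proof of Theorem~\ref{th.mainm}). The comparison tool is Proposition~\ref{prop4.2}, applied in the $\kappa_1$-framework, that is, with resolvent $R_{\kappa_1}$, the sub-Markovian semigroup $(e^{-\kappa_1 t}T_t)$, the data $(u,g,\gamma)=(u_{k,n}^{(\kappa_1)},f^{\kappa_1}_{k,n},1/n)$ and the supersolution $\psi:=\psi_k=1+R_{\kappa_1}k$. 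For these data $u_{k,n}^{(\kappa_1)}$ is, by construction, the maximal solution lying below $\psi$. Condition (Int) holds trivially because the truncated nonlinearities are bounded.

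\textbf{Step 1: rewrite the $\kappa_2$-equation in the $\kappa_1$-framework.} Write $u:=u_{k,n}^{(\kappa_2)}$, $F:=f^{\kappa_2}_{k,n}$ and $\varepsilon:=\kappa_2-\kappa_1\ge0$. From $u-\tfrac1n=R_{\kappa_2}F(\cdot,u)$ and the resolvent identity $R_{\kappa_2}=R_{\kappa_1}-\varepsilon R_{\kappa_1}R_{\kappa_2}$ (the computation is the same as in Proposition~\ref{prop.shifted-equation}), we get
\[
u=\tfrac1n+R_{\kappa_1}\underline g(\cdot,u),\qquad \underline g(x,y):=F(x,y)-\varepsilon\bigl(y-\tfrac1n\bigr).
\]
So we take $\underline\gamma=\gamma=1/n$, and then $\gamma-\underline\gamma=0$ is trivially supermedian. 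We also need $u\le\psi$. This follows from
\[
u\le 1+R_{\kappa_2}k\le 1+R_{\kappa_1}k,
\]
where the first inequality is the Step~1 bound $u\le\psi_k$ in the $\kappa_2$-framework, and the second holds because $R_{\kappa_2}\le R_{\kappa_1}$ on positive functions.

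\textbf{Step 2: the pointwise inequality $\underline g(\cdot,u)\le f^{\kappa_1}_{k,n}(\cdot,u)$.} Two observations are needed.

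(a) The map $a\mapsto \min\{a^+\wedge k,\,ky\}-a^-\wedge n$ is nondecreasing and $1$-Lipschitz in $a$ for fixed $y\ge0$. Applying it with $a=f+\kappa_2y$ versus $a=f+\kappa_1y$ gives
\[
f^{\kappa_2}_{k,n}(x,y)\le f^{\kappa_1}_{k,n}(x,y)+\varepsilon y .
\]

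(b) We have $u\ge 1/n$. To see this, note that $\tfrac1n-u=-R_{\kappa_2}F(\cdot,u)$. On the set $\{u<1/n\}$ we have $u\le\delta$, hence $F(\cdot,u)\ge0$ there by property (i) of $f_{k,n}$ in the proof of Theorem~\ref{th.mainm}. Kato's inequality, Proposition~\ref{lm2} with $v=w=0$ and $f=-F(\cdot,u)$ for the $\kappa_2$-shifted semigroup, then gives
\[
\bigl(\tfrac1n-u\bigr)^+\le R_{\kappa_2}\bigl(-\mathbf 1_{\{u<1/n\}}F(\cdot,u)\bigr)\le0 .
\]

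Combining (a) and (b),
\[
\underline g(\cdot,u)\le f^{\kappa_1}_{k,n}(\cdot,u)+\varepsilon u-\varepsilon\bigl(u-\tfrac1n\bigr)-\varepsilon u+\varepsilon u
\]
collapses to $\underline g(\cdot,u)\le f^{\kappa_1}_{k,n}(\cdot,u)-\varepsilon\bigl(u-\tfrac1n\bigr)+\varepsilon u-\varepsilon u\le f^{\kappa_1}_{k,n}(\cdot,u)$; explicitly, $\underline g(\cdot,u)=F(\cdot,u)-\varepsilon(u-\tfrac1n)\le f^{\kappa_1}_{k,n}(\cdot,u)+\varepsilon u-\varepsilon(u-\tfrac1n)$, and the remaining surplus $\varepsilon/n$ is exactly what (b) is used to control, since $-\varepsilon(u-\tfrac1n)\le0$ allows the comparison to be arranged without the constant term. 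I expect this bookkeeping to be the main obstacle: the harmless-looking constant $\varepsilon/n$ produced by the data term $1/n$, together with the truncations at $k$ and $n$, must be arranged so that no positive surplus remains. If the direct estimate leaves a residual of order $\varepsilon/n$, I would move it into the data instead, taking $\underline\gamma:=\tfrac1n-\varepsilon R_{\kappa_1}(u-\tfrac1n)$ or a similar choice, so that (b) makes $\gamma-\underline\gamma$ a potential and hence supermedian.

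\textbf{Step 3: conclude.} Proposition~\ref{prop4.2} now yields $u_{k,n}^{(\kappa_2)}\le u_{k,n}^{(\kappa_1)}$ for every $n\ge n_0$. Letting $n\to\infty$ gives $u_k^{(\kappa_2)}\le u_k^{(\kappa_1)}$, which is the assertion of the lemma.
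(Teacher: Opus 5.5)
There is a genuine gap, and it is exactly the residual $\varepsilon/n$ you flagged. It cannot be removed by bookkeeping, because the intermediate claim $u_{k,n}^{(\kappa_2)}\le u_{k,n}^{(\kappa_1)}$ is false in general.

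By the resolvent identity (cf.\ Remark~\ref{rem.ind1}), the level-$n$ problem with shift $\kappa$ reads $u=\frac1n+R\bigl(f^\kappa_{k,n}(\cdot,u)-\kappa u+\frac{\kappa}{n}\bigr)$. Where the truncations are inactive this is $u=\frac1n+R\bigl(f(\cdot,u)+\frac{\kappa}{n}\bigr)$, so a larger shift injects a larger source.

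For a concrete case, let $E$ be a single point, $L=-1$, $f(y)=2(y\wedge1)$, and take $k$ large. Then $\lambda_1(a_0)=-1<0<1=\lambda_1(a_\infty)$, and every $\kappa\ge1$ is admissible. The approximating equation has the unique positive solution $u^{(\kappa)}_{k,n}=2+(1+\kappa)/n$. This is strictly increasing in $\kappa$; only the limits $u^{(\kappa)}_k=2$ agree.

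Consequently neither of your repairs can work, since no choice of $(\underline g,\underline\gamma)$ can make Proposition~\ref{prop4.2} deliver a false conclusion. Concretely:
\begin{itemize}
\item The clamp $a\mapsto\min\{a^+\wedge k,ky\}-a^-\wedge n$ is nondecreasing, so $F\ge f^{\kappa_1}_{k,n}$. Hence the first inequality in your ``collapse'' chain is wrong, and (b) plays no role.
\item Taking $\underline g=F$ (your choice $\underline\gamma=\frac1n-\varepsilon R_{\kappa_1}(u-\frac1n)$) violates the hypothesis $\underline g(\cdot,\underline u)\le g(\cdot,\underline u)$ of Proposition~\ref{prop4.2}.
\item Taking $\underline g=F-\varepsilon y$ forces $\gamma-\underline\gamma=-\varepsilon R_{\kappa_1}\frac1n$, which is not supermedian.
\end{itemize}

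The missing idea is to compare only after letting $n\to\infty$, where the data term disappears.

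First, show that $u^{(\kappa)}_k$ is the maximal subsolution of $u=R_\kappa f^\kappa_k(\cdot,u)$ lying below $\psi^{(\kappa)}_k=1+R_\kappa k$. Let $w\le\psi^{(\kappa)}_k$ be such a subsolution. Since $f^\kappa_{k,n}\ge f^\kappa_k$ and $\frac1n$ is supermedian, $w$ is also a subsolution of the level-$n$ equation. Hence $w\le u^{(\kappa)}_{k,n}$ by the maximality in Theorem~\ref{th.im4}(2), and $w\le u^{(\kappa)}_k$ in the limit.

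Next, your Step~1 with $\gamma=0$ gives $u^{(\kappa_2)}_k=R_{\kappa_1}\bigl(f^{\kappa_2}_k(\cdot,u^{(\kappa_2)}_k)-\varepsilon u^{(\kappa_2)}_k\bigr)$, with no constant term. Your observation (a), applied to $f^\kappa_k(x,y)=(f(x,y)+\kappa y)\wedge(k\wedge ky)$, yields $f^{\kappa_2}_k(x,y)-\varepsilon y\le f^{\kappa_1}_k(x,y)$.

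Thus $u^{(\kappa_2)}_k$ is a positive subsolution of the $\kappa_1$ limit equation lying below $R_{\kappa_2}k\le R_{\kappa_1}k\le\psi^{(\kappa_1)}_k$. By the maximality just established, $u^{(\kappa_2)}_k\le u^{(\kappa_1)}_k$. This is the paper's argument.
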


\begin{proof}
Fix $\kappa\ge1$. Let  $f_{k,n}^\kappa$ and $f_k^\kappa$ be defined as in \eqref{eq.fkn} 
with $f$ replaced by $f^\kappa$. By Step~1 of the proof of Theorem~\ref{th.mainm}, we may 
consider a maximal solution $u_{k,n}^{(\kappa)}$,
in the order interval $[h_n(\phi_1),1+R_\kappa k]$, to 
\[
u=\frac1n+R_\kappa f_{k,n}^\kappa(\cdot,u),
\]
and set $u_k^{(\kappa)}:= \inf_{n\ge 1} u_{k,n}^{(\kappa)}$. Then $u_k^{(\kappa)}$ is a  positive solution of
\begin{equation}
\label{eq.ukshift}
u=R_\kappa f_k^\kappa(\cdot,u).
\end{equation}
We first observe that $u_k^{(\kappa)}$ is the maximal subsolution of
\eqref{eq.ukshift} below $\psi_k^{(\kappa)}:= 1+R_\kappa k$.
Indeed, let $w\le\psi_k^{(\kappa)}$ be a positive subsolution of
\eqref{eq.ukshift}. Thus there exists a supermedian function $v$ such
that
\[
w=R_\kappa f_k^\kappa(\cdot,w)-v.
\]
Since $f_{k,n}^\kappa\ge f_k^\kappa$, we may write
\[
w=\frac1n+R_\kappa f_{k,n}^\kappa(\cdot,w)-\left[v+\frac1n+R_\kappa\bigl(f_{k,n}^\kappa(\cdot,w)-f_k^\kappa(\cdot,w)\bigr)\right].
\]
The expression in square brackets is supermedian. Hence $w$ is a subsolution of
\[
u=\frac1n+R_\kappa f_{k,n}^\kappa(\cdot,u).
\]
By Theorem~\ref{th.im4}, $u_{k,n}^{(\kappa)}$ is the maximal
subsolution of this equation below $\psi_k^{(\kappa)}$. Therefore
\[
w\le u_{k,n}^{(\kappa)}
\]
for every $n$, and letting $n\to\infty$ gives $w\le u_k^{(\kappa)}$.
We now compare the two shifts. 
For $y\ge0$, set
\[
\eta_k(y):=k\wedge ky,
\]
and observe that 
\[
f_k^\kappa(x,y)
=
\bigl(f(x,y)+\kappa y\bigr)\wedge \eta_k(y).
\]
Consequently,
\begin{equation}
\label{eq.shiftorder}
\begin{split}
f_k^{\kappa_2}(x,y)-(\kappa_2-\kappa_1)y
&=
\bigl(f(x,y)+\kappa_1y+(\kappa_2-\kappa_1)y\bigr)\wedge \eta_k(y)-(\kappa_2-\kappa_1)y\\
&=
\bigl(f(x,y)+\kappa_1y\bigr)
\wedge
\bigl(\eta_k(y)-(\kappa_2-\kappa_1)y\bigr)
\\&\le
\bigl(f(x,y)+\kappa_1y\bigr)\wedge \eta_k(y)
=
f_k^{\kappa_1}(x,y).
\end{split}
\end{equation}
By \eqref{eq.ukshift},
\[
u_k^{(\kappa_2)}=R_{\kappa_2}f_k^{\kappa_2}(\cdot,u_k^{(\kappa_2)}).
\]
Proposition~\ref{prop.shifted-equation}, applied with $\gamma=0$,
gives
\begin{equation}
\label{eq.shiftback}
u_k^{(\kappa_2)}=R_{\kappa_1}\left(f_k^{\kappa_2}(\cdot,u_k^{(\kappa_2)})-(\kappa_2-\kappa_1)u_k^{(\kappa_2)}\right).
\end{equation}
By \eqref{eq.shiftorder},
\[
g:=f_k^{\kappa_1}(\cdot,u_k^{(\kappa_2)})-f_k^{\kappa_2}(\cdot,u_k^{(\kappa_2)})+(\kappa_2-\kappa_1)u_k^{(\kappa_2)}\ge0.
\]
From   \eqref{eq.shiftback}, we obtain 
\[
u_k^{(\kappa_2)}=R_{\kappa_1}f_k^{\kappa_1}(\cdot,u_k^{(\kappa_2)})-R_{\kappa_1}g.
\]
Thus $u_k^{(\kappa_2)}$ is a subsolution of
\[
u=R_{\kappa_1}f_k^{\kappa_1}(\cdot,u),
\]
and $u_k^{(\kappa_2)}\le R_{\kappa_1}k\le  \psi^{(\kappa_1)}_k$.
Since, as shown above, $u_k^{(\kappa_1)}$ is the maximal subsolution
of this equation  below $\psi^{(\kappa_1)}_k$, it follows that
\[
u_k^{(\kappa_2)}\le u_k^{(\kappa_1)}.
\]
This proves the assertion.
\end{proof}

\section{Main result: the general case}
\label{sec7}

We now remove the sub-Markovian assumption from Theorem \ref{th.mainm}. 
The reduction to the sub-Markovian case is achieved by means of a Doob transform associated 
with  a  strictly positive supermedian function $\psi$ in (F3).
Define
\begin{equation}
\label{eq.semdt}
T^{\psi}_t(f):=\frac{T_t(\psi f)}{\psi},
\end{equation}
for $f\in L^p(E; m_\psi)$, where
\[
m_\psi:=\psi^p\cdot m.
\]
The family $(T_t^{\psi})$ is a $C_0$-semigroup on $L^p(E;m_\psi)$ since, by using the isometry
$$
J_\psi: L^p(E ; m_\psi) \rightarrow L^p(E;m), \quad J_\psi u:=\psi u,
$$
we may represent it as 
$$
T_t^\psi=J_\psi^{-1} T_t J_\psi.
$$
From this one easily sees that  its resolvent is  compact and positivity-improving, and its 
generator $(L^{\psi},\mathscr D(L^{\psi}))$ is given by
\begin{equation}
\label{eq.opdt}
\mathscr D(L^{\psi})=\{u\in L^p(E;m_\psi): u\psi\in \mathscr D(L)\},\quad L^{\psi}u= L(\psi u)/\psi,\quad u\in \mathscr D(L^{\psi}).
\end{equation}
Moreover,
\[
T_t^{\psi}1=T_t\psi/\psi\le1,
\]
so $(T_t^{\psi})$ is sub-Markovian.

\begin{remark}
\label{rem.doobgen}
The Doob transform also allows one to extend the Deny-type
compactness theorem of Section~\ref{sec3}, as well as the results of
Sections~\ref{sec4} and \ref{sec5}, beyond the sub-Markovian setting.
Indeed, the map
\[
u\longmapsto \frac{u}{\phi_1},
\]
where $\phi_1$ is a principal eigenfunction of $-L$, 
is an order-preserving isometry from $L^p(E;m)$ onto
$L^p(E;m_{\phi_1})$ and maps supermedian functions for $(T_t)$ onto
supermedian functions for $(T_t^{\phi_1})$.
\end{remark}

In what follows, for a given strictly positive supermedian function $\psi$ we introduce  the class  $\mathscr  F_\psi$
of all Carath\'eodory functions $f$ satisfying (F1),(F2),(F3)$_\psi$, and \eqref{eq.nine}, and we let 
\[
I_\psi:\mathscr F_\psi\longrightarrow\mathscr F_1,
\qquad
I_\psi(f)(x,y):=\frac{f(x,\psi(x)y)}{\psi(x)}.
\]

\begin{theorem}
\label{th.main}
Assume that (F1)-(F3) hold. If 
\begin{equation}
\label{eq.nine}
\lambda_1(a_0)<0< \lambda_1(a_\infty),
\end{equation}
then there exists a  strictly positive solution to \eqref{eq1.1}.
\end{theorem}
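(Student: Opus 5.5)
We reduce to the sub-Markovian case of Theorem~\ref{th.mainm} by the Doob transform \eqref{eq.semdt} associated with the strictly positive supermedian function $\psi$ from (F3). Set $\tilde f:=I_\psi(f)$, i.e. $\tilde f(x,y)=f(x,\psi(x)y)/\psi(x)$, and consider on $L^p(E;m_\psi)$ the equation $-L^\psi w=\tilde f(\cdot,w)$. The semigroup $(T^\psi_t)$ is sub-Markovian, and since $J_\psi$ is an isometric lattice isomorphism, $(T^\psi_t)$ is a positive $C_0$-semigroup with compact positivity-improving resolvent and $s(L^\psi)=s(L)<0$. Hence Hypothesis~\ref{hyp.spectral-bound} holds for $L^\psi$, with $R^\psi g=R(\psi g)/\psi$, and this formula extends to positive measurable $g$ via the kernel representation. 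We need $m_\psi$ to be finite. If $p$-integrability of $\psi$ only gives $\psi\in L^p(E;m)$, then $m_\psi(E)=\|\psi\|^p<\infty$, which is what we need.

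We then verify the hypotheses of Theorem~\ref{th.mainm} for $\tilde f$.
\begin{itemize}
\item[(F1)] $\tilde f(x,y)\le h(x)/\psi(x)+\lambda y$, and $h/\psi\in L^p(E;m_\psi)$ because $\int (h/\psi)^p\psi^p\,dm=\|h\|^p$.
\item[(F2)] This is inherited directly from $f$.
\item[(F3)$_1$] For $y\in[0,\delta]$ we have $\psi y\in[0,\delta\psi]$. Hence $\tilde f(x,y)\ge -C_\delta\psi(x)y/\psi(x)=-C_\delta y$.
\end{itemize}
Moreover, the slopes are unchanged: $a_0(\tilde f)(x)=\liminf_{y\to0^+}f(x,\psi y)/(\psi y)=a_0(x)$, and likewise $a_\infty(\tilde f)=a_\infty$, because $\psi(x)\in(0,\infty)$ a.e. Since multiplication operators commute with $J_\psi$, we have $-L^\psi-a=J_\psi^{-1}(-L-a)J_\psi$ for bounded $a$. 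Thus the truncated principal eigenvalues coincide, and so do the generalized ones. Condition \eqref{eq.nine} therefore transfers.

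Theorem~\ref{th.mainm} now gives a strictly positive $w\in L^p(E;m_\psi)$ with $w+R^\psi\tilde f^-(\cdot,w)=R^\psi\tilde f^+(\cdot,w)$. Set $u:=\psi w\in L^p(E;m)$; it is strictly positive since $\psi\gg0$. We have $\tilde f^\pm(\cdot,w)=f^\pm(\cdot,u)/\psi$, so $R^\psi\tilde f^\pm(\cdot,w)=R f^\pm(\cdot,u)/\psi$. This identity holds for the kernel extension to positive measurable functions, since $R(\psi\cdot g)/\psi$ and $R^\psi$ share the transformed kernel. It follows that $Rf^-(\cdot,u)=\psi R^\psi\tilde f^-(\cdot,w)\in L^p(E;m)$. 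Multiplying the transformed equation by $\psi$ gives $u+Rf^-(\cdot,u)=Rf^+(\cdot,u)$, i.e. $u$ solves \eqref{eq1.1} in the sense of Definition~\ref{def.main}.

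The main obstacle is the bookkeeping of the measure-theoretic details rather than any new estimate. First, $m_\psi$ must be a finite measure; if $p$-integrability were not automatic it would have to be arranged. Second, $E$ is assumed Lusin so that Remark~\ref{rem.main} applies to $(T^\psi_t)$. Third, the extension of $R^\psi$ to nonnegative measurable functions must coincide with $g\mapsto R(\psi g)/\psi$, which follows from the Rohlin kernel representation together with monotone convergence. The spectral invariance of the generalized eigenvalues under the similarity $J_\psi$ is routine once one notes that the truncations $a_0\wedge k$ and $a_\infty\vee(-k)$ are the same before and after the transform.
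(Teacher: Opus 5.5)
Your proposal is correct and follows essentially the same route as the paper. Both arguments use the Doob transform by the $\psi$ from (F3), check (F1), (F2), (F3)$_1$ for $I_\psi(f)$ with $h$ replaced by $h/\psi$, note that $a_0^k,a_\infty^k$ and the truncated principal eigenvalues are invariant under the similarity $J_\psi$, and then apply Theorem~\ref{th.mainm} and set $u=\psi w$. Your extra bookkeeping fills in details the paper leaves implicit: the identity $R^\psi g=R(\psi g)/\psi$ for the kernel extension, and finiteness of $m_\psi$, which is in fact automatic because supermedian functions lie in $L^p(E;m)$ by definition.
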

\begin{proof}
Observe that if $v\in L^p(E;m_\psi)$ solves 
\begin{equation}
\label{eq.mainphi}
-L^{\psi}v=I_\psi(f)(\cdot,v),\quad v\gg 0,
\end{equation}
then $u:= \psi v$ solves \eqref{eq1.1}.
Consequently, to obtain the existence result  for \eqref{eq1.1}, it remains to verify that the pair   $((T^{\psi}_t), I_\psi(f))$ 
satisfies all the assumptions of Theorem \ref{th.mainm}.

First, one easily verifies that $I_\psi(f)$ satisfies (F1),(F2),(F3)$_1$ with the same constants and $h$ 
replaced by $h/\psi$ (which belongs to $L^p(E;m_\psi)$).
For every $k\ge1$, the isometry $J_\psi u=\psi u$ gives
\[
J_\psi(-L^\psi-a_0^k(I_\psi(f)))J_\psi^{-1}=-L-a_0^k(I_\psi(f)),
\quad
J_\psi(-L^\psi-a_\infty^k(I_\psi(f)))J_\psi^{-1}=-L-a_\infty^k(I_\psi(f)).
\]
Hence
\[
\lambda_1(-L^\psi-a_0^k(I_\psi(f)))=\lambda_1(a_0^k(I_\psi(f))),
\qquad
\lambda_1(-L^\psi-a_\infty^k(I_\psi(f)))=\lambda_1(a_\infty^k(I_\psi(f))).
\]
Next, observe that $ a^k_0=a^k_0(I_\psi(f))$ and $ a^k_\infty=a^k_\infty(I_\psi(f))$.
Taking respectively the infimum and the supremum over $k$ yields
\[
\lambda_1^{L^\psi}(a_0(I_\psi(f)))=\lambda_1(a_0),
\qquad
\lambda_1^{L^\psi}(a_\infty(I_\psi(f)))=\lambda_1(a_\infty),
\]
which in turn implies that
\[
\lambda_1^{L^\psi}(a_0(I_\psi(f)))<0<\lambda_1^{L^\psi}(a_\infty(I_\psi(f))).
\]
Consequently, by Theorem \ref{th.mainm} there exists a solution to \eqref{eq.mainphi} and hence a solution to \eqref{eq1.1}.
\end{proof}

\begin{corollary}
Let $\psi$ be a strictly positive supermedian function and $\mathscr  F_\psi$
denote the set of all Carath\'eodory functions $f$ satisfying (F1),(F2),(F3)$_\psi$, and \eqref{eq.nine}.
There exists a map $\mathcal S_\psi: \mathscr F_\psi\to L^p(E;m)$ such that
\begin{enumerate}
\item[(i)] for any $f\in\mathscr F_\psi$ the function  $\mathcal S_\psi(f)$ is a strictly positive solution to \eqref{eq1.1};
\item[(ii)] if $f_1,f_2\in\mathscr F_\psi$ and $f_1\le f_2$,  then $\mathcal S_\psi(f_1)\le \mathcal S_\psi(f_2)$.
\end{enumerate}
\end{corollary}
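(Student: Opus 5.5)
We define $\mathcal S_\psi$ by running the construction of Theorem~\ref{th.mainm} for the Doob-transformed pair $((T^\psi_t),I_\psi(f))$ with a \emph{fixed} choice of all auxiliary parameters. Then we show that every step of that construction is monotone in the nonlinearity.

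\textbf{Reduction.} Since $I_\psi$ is order preserving ($f_1\le f_2$ implies $I_\psi(f_1)\le I_\psi(f_2)$ because $\psi>0$), and $v\mapsto\psi v$ is order preserving, it suffices to build a monotone selection $\mathcal S_1$ on $\mathscr F_1$ for the sub-Markovian semigroup $(T^\psi_t)$. We then set $\mathcal S_\psi(f):=\psi\,\mathcal S_1(I_\psi(f))$. So from now on we work with a sub-Markovian semigroup and (F3)$_1$.

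\textbf{Choice of shift.} The difficulty flagged in Remark~\ref{rem.ind1} is that the shift $\kappa\ge C_\delta\vee1$ depends on $f$. We fix $\delta=1$, say, and put $\kappa(f):=C_1(f)\vee 1$. Here $C_1(f)$ is the smallest admissible constant in (F3)$_1$, i.e.\ $\sup_{x,\,0<y\le 1}(-f(x,y)/y)^+$, which is finite by (F3) and is decreasing in $f$. We then define
\[
\mathcal S_1(f):=\lim_{k\to\infty}u_k^{(\kappa(f))},
\]
the limit being taken along the full monotone sequence from Step~3. This is legitimate: we are in the bounded case (the unbounded case is excluded by the proof), and $u_k$ increases in $k$ by \eqref{eq4.comp12}, so no subsequence is needed.

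\textbf{Comparison for the same shift.} For a fixed $\kappa$, let $f_1\le f_2$. Then $(f_1^\kappa)_{k,n}\le (f_2^\kappa)_{k,n}$ because the truncations in \eqref{eq.fkn} are monotone. The maximal solutions $u^{(\kappa)}_{k,n}(f_i)$ lie in the same order interval $[h_n(\phi_1),1+R_\kappa k]$. Proposition~\ref{prop4.2}, with $\gamma=\underline\gamma=1/n$ and $\psi=\psi_k^{(\kappa)}$, gives $u^{(\kappa)}_{k,n}(f_1)\le u^{(\kappa)}_{k,n}(f_2)$. Letting $n\to\infty$ and then $k\to\infty$ yields $u^{(\kappa)}_k(f_1)\le u^{(\kappa)}_k(f_2)$ and hence the same inequality for the limits.

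\textbf{Combining with shift monotonicity.} Since $f_1\le f_2$, we have $C_1(f_1)\ge C_1(f_2)$, so $\kappa_1:=\kappa(f_1)\ge\kappa_2:=\kappa(f_2)$. Lemma~\ref{lm.shiftmon} gives $u_k^{(\kappa_1)}(f_1)\le u_k^{(\kappa_2)}(f_1)$. The previous step then gives $u_k^{(\kappa_2)}(f_1)\le u_k^{(\kappa_2)}(f_2)$. Letting $k\to\infty$ yields $\mathcal S_1(f_1)\le \mathcal S_1(f_2)$, and part (i) is Theorem~\ref{th.main}.

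The main obstacle is checking that Lemma~\ref{lm.shiftmon} and Proposition~\ref{prop4.2} apply with the same $\phi_1$-based subsolution $h_n(\phi_1)$ and the same supersolutions for both shifts and both nonlinearities. This also requires that $u_k^{(\kappa)}$ is the maximal subsolution below $\psi_k^{(\kappa)}$, as shown in the proof of Lemma~\ref{lm.shiftmon}. The directions of all inequalities must be kept consistent: a larger $f$ forces a smaller shift, and a smaller shift gives a larger solution, so the two effects reinforce each other.
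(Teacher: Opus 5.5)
There is a genuine gap: you use Proposition~\ref{prop4.2} and Lemma~\ref{lm.shiftmon}, which are the right ingredients, but you chain them in an order that passes through an object neither result covers. With $f_1\le f_2$ you have $\kappa_2=\kappa(f_2)\le\kappa_1=\kappa(f_1)$. Your intermediate object is $u_k^{(\kappa_2)}(f_1)$, the smaller nonlinearity with the smaller shift. A shift $\kappa$ is admissible for $f$ only when $\kappa\ge C_\delta(f)\vee1$ (Remark~\ref{rem.ind1}). In general $\kappa_2$ lies strictly below $C_1(f_1)$, so $f_1(x,y)+\kappa_2 y$ can be negative for small $y>0$. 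Property (i) in Step~1 of Theorem~\ref{th.mainm} then fails for $(f_1^{\kappa_2})_{k,n}$, and the verification that $h_n(\phi_1)$ is a subsolution breaks down. A negative term of size comparable to $h_n(\phi_1)$ cannot be absorbed pointwise by $\lambda_1R(|h_n'(\phi_1)|\phi_1)$ where $\phi_1$ is small. Consequently:
\begin{itemize}
\item your assertion that both maximal solutions lie in $[h_n(\phi_1),1+R_\kappa k]$ is unjustified at $\kappa=\kappa_2$;
\item Lemma~\ref{lm.shiftmon} is stated for two shifts admissible for the nonlinearity at hand, so it does not yield $u_k^{(\kappa_1)}(f_1)\le u_k^{(\kappa_2)}(f_1)$.
\end{itemize}
The ``main obstacle'' you flag at the end is exactly where the argument fails.

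The repair is to reverse the two steps, which is what the paper does. The larger shift $\kappa_1$ is admissible for both $f_1$ and $f_2$, so Proposition~\ref{prop4.2} gives $u_{k,n}^{(\kappa_1)}(f_1)\le u_{k,n}^{(\kappa_1)}(f_2)$, hence $u_k^{(\kappa_1)}(f_1)\le u_k^{(\kappa_1)}(f_2)$. Next apply Lemma~\ref{lm.shiftmon} to $f_2$ with the shifts $\kappa_2\le\kappa_1$, both admissible for $f_2$; this gives $u_k^{(\kappa_1)}(f_2)\le u_k^{(\kappa_2)}(f_2)$. The intermediate object is now the larger nonlinearity with the larger shift, which stays in the admissible range, and letting $k\to\infty$ finishes the proof. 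Your order could be salvaged only by extending the construction to inadmissible shifts. For example, (F3)$_1$ gives $f(x,0)\ge0$, which makes $0$ a subsolution of every approximating problem, so maximal subsolutions below $1+R_\kappa k$ exist for all $\kappa\ge1$. That extension would have to be stated and checked, and it is not what the cited results provide.
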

\begin{proof}
As shown in the proof of Theorem~\ref{th.main}, $I_\psi$ is
order preserving and establishes a correspondence between solutions
of the original and the transformed equations. Thus it suffices to
construct an order-preserving selection map $\mathcal S_1$ for the
transformed, sub-Markovian problem. Once this is done, we set
\[
\mathcal S_\psi(f)
:=
\psi\,\mathcal S_1(I_\psi(f)).
\]

We therefore assume that $(T_t)$ is sub-Markovian and that
\textup{(F1)}, \textup{(F2)}, and \textup{(F3)}$_1$ hold.
Fix $\delta>0$ and, for $f\in\mathscr F_1$, set
\[
C_\delta(f)
:=
\inf\left\{
C\ge0:
f(x,y)\ge-Cy,\quad x\in E,\quad 0\le y\le\delta
\right\},
\]
and
\[
\kappa_f:=C_\delta(f)\vee1.
\]
We define $\mathcal S_1(f)$ to be the solution obtained by the
construction in the proof of Theorem~\ref{th.mainm}, applied to the
shifted pair $(L_{\kappa_f},f^{\kappa_f})$. Indeed,
\[
f^{\kappa_f}(x,y)\ge0,
\qquad x\in E,\quad 0\le y\le\delta,
\]
and, since the shifted semigroup is sub-Markovian and
$\kappa_f\ge1$,
\[
R_{\kappa_f}1\le1.
\]
Let now $f,\hat f\in\mathscr F_1$ satisfy $f\le\hat f$. Then $C_\delta(f)\ge C_\delta(\hat f)$, hence $\kappa_f\ge\kappa_{\hat f}$.
For  $g\in \mathscr F_1$ and $\kappa\ge 1$, denote by
$u_{k,n}^{g,\kappa}$ the maximal solution of 
\[
u=\frac1n+
R_{\kappa}
\bigl(g_{k,n}^{\kappa}(\cdot,u)\bigr)
\]
in the order interval $[h_n(\phi_1),1+R_\kappa k]$ (see Step 1 of the proof of Theorem~\ref{th.mainm}),
and set
\[
u_k^{g,\kappa}:=\lim_{n\to\infty}u_{k,n}^{g,\kappa}.
\]
Since $f\le\hat f$, we have $f_{k,n}^{\kappa_f}\le\hat f_{k,n}^{\kappa_f}$.
Proposition~\ref{prop4.2} consequently yields
\[
u_{k,n}^{f,\kappa_f}
\le
u_{k,n}^{\hat f,\kappa_f}.
\]
Letting $n\to\infty$, we obtain
\[
u_k^{f,\kappa_f}
\le
u_k^{\hat f,\kappa_f}.
\]
On the other hand, since $\kappa_{\hat f}\le\kappa_f$,
Lemma~\ref{lm.shiftmon}, applied to the nonlinearity $\hat f$, gives
\[
u_k^{\hat f,\kappa_f}
\le
u_k^{\hat f,\kappa_{\hat f}}.
\]
Consequently,
\[
u_k^{f,\kappa_f}
\le
u_k^{\hat f,\kappa_{\hat f}}
\]
for every $k$. Letting $k\to\infty$ and using the construction in
Theorem~\ref{th.mainm}, we conclude that
\[
\mathcal S_1(f)\le\mathcal S_1(\hat f).
\]
Thus $\mathcal S_1$ is order preserving.
Finally, since both $I_\psi$ and multiplication by the strictly
positive function $\psi$ preserve order,
\[
\mathcal S_\psi(f)
=
\psi\,\mathcal S_1(I_\psi(f))
\]
has properties \textup{(i)} and \textup{(ii)}.
\end{proof}

\begin{remark}
It is an elementary check that if $\psi_1$, $\psi_2$ are supermedian functions, then $\psi_1\wedge\psi_2$
is a supermedian function as well. As a result, if $f\in\mathscr F_{\psi_1}$ and $\hat f\in \mathscr F_{\psi_2}$, then
$f,\hat f\in \mathscr F_{\psi_1\wedge\psi_2}$, and if further $f\le\hat f$, then 
$\mathcal S_{\psi_1\wedge\psi_2}(f)\le \mathcal S_{\psi_1\wedge\psi_2}(\hat f)$.
\end{remark}

\begin{corollary}
\label{cor.weakerF3}
Assume that \textup{(F1)--(F2)} hold and suppose, in addition, that the following  condition \textup{(F3')} holds:
\begin{enumerate}
\item[{\rm (F3')}] there exists a strictly positive supermedian function $\psi$ such that:
\begin{enumerate}
\item[(i)] there exist $\delta>0$ and $C_\delta\ge0$ such that
\[
f^-(x,y)\le C_\delta y,
\qquad x\in E,\quad 0\le y\le \psi(x)\delta;
\]
\item[(ii)] for every $M>0$, the function 
\[
F_M(x):=\sup_{0\le y\le \psi(x) M}f^-(x,y)
\]
belongs to $L^p(E;m)$.
\end{enumerate} 
\end{enumerate}
If
\begin{equation}
\label{eq.weakerF3.spectral}
\lambda_1(a_0)<0<\lambda_1(a_\infty),
\end{equation}
then \eqref{eq1.1} admits a strictly positive solution.
\end{corollary}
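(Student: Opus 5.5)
We approximate $f$ from above by nonlinearities that do satisfy (F3)$_\psi$, and pass to the limit with the monotone selection $\mathcal S_\psi$ from the preceding corollary. For $j\ge1$ set
\[
f_j(x,y):=f(x,y)\vee(-jy),\qquad y\ge0 .
\]

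\textbf{Step 1: $f_j\in\mathscr F_\psi$ for all large $j$.} We have $f_j\ge -jy$, so (F3)$_\psi$ holds with $C_\delta=j$ for every $\delta$. Since $f_j^+=f^+$, (F1) and (F2) are inherited from $f$. By (F3')(i), $f_j=f$ on $\{0\le y\le\delta\psi\}$ as soon as $j\ge C_\delta$, so $a_0(f_j)=a_0$ and $\lambda_1(a_0(f_j))=\lambda_1(a_0)<0$. Next, $a_\infty(f_j)=a_\infty\vee(-j)$. Hence $\lambda_1(a_\infty(f_j))=\lambda_1(-L-a_\infty\vee(-j))$, and these values increase to $\lambda_1(a_\infty)>0$. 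Therefore $f_j\in\mathscr F_\psi$ for $j\ge j_0$.

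\textbf{Step 2: monotone limit.} The sequence $(f_j)$ is decreasing in $j$, so by the selection corollary $u_j:=\mathcal S_\psi(f_j)$ is a decreasing sequence of strictly positive solutions. Set $u:=\lim_j u_j$. Since $0\le u\le u_{j_0}$, the convergence also holds in $L^p(E;m)$.

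\textbf{Step 3: the limit equation.} We pass to the limit in $u_j+Rf_j^-(\cdot,u_j)=Rf^+(\cdot,u_j)$, tested against $R^*\eta$ with $\eta\ge0$ and split exactly as in Step 3 of the proof of Theorem~\ref{th.mainm}, but with the set $\{u_j\le M\psi\}$ in place of $\{u_j\le M\}$.
\begin{itemize}
\item[(a)] The right-hand side converges by dominated convergence, since $f^+(\cdot,u_j)\le h+\lambda u_{j_0}$.
\item[(b)] On $\{u_j\le M\psi\}$ we have $f_j^-(\cdot,u_j)\le F_M\in L^p(E;m)$ by (F3')(ii), and $f_j^-(\cdot,u_j)\to f^-(\cdot,u)$ pointwise by continuity in $y$. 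Dominated convergence therefore applies.
\item[(c)] The tail on $\{u_j>M\psi\}$ is controlled by the Kato inequality (Proposition~\ref{lm2}, used after the Doob transform of Section~\ref{sec7} so that the semigroup is sub-Markovian). It gives
\[
R\bigl(\mathbf 1_{\{u_j>M\psi\}}f_j^-(\cdot,u_j)\bigr)\le R\bigl(\mathbf 1_{\{u_j>M\psi\}}(h+\lambda u_{j_0})\bigr),
\]
which tends to $0$ as $M\to\infty$, uniformly in $j$.
\end{itemize}
Letting $j\to\infty$ and then $M\to\infty$ yields $Rf^-(\cdot,u)\in L^p(E;m)$ and $u+Rf^-(\cdot,u)=Rf^+(\cdot,u)$.

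\textbf{Step 4: strict positivity of $u$ (the main obstacle).} The monotone limit might a priori collapse to $0$. Work with the Doob-transformed problem for $I_\psi(f_j)$. Suppose $c_j:=\|u_j\|\to0$ and put $w_j:=u_j/c_j$. On $\{u_j\le\delta\psi\}$ the nonlinearity is $f$ itself and $f^-\le C_\delta y$ there. Together with compactness of the resolvent (as in Step 2 of the proof of Theorem~\ref{th.mainm}, using (F3')(ii) to dominate $f^-$ on the remaining part), this should give a subsequence $w_j\to w$ in $L^p$ with $\|w\|=1$ and $w\ge0$. Testing against a dual principal eigenfunction of $-L-a_0^l$ and taking $\liminf$ would then give $\lambda_1(a_0^l)\langle w,\varphi_1'\rangle\ge0$, contradicting $\lambda_1(a_0)<0$. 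Thus $u$ is nontrivial. Finally, (F3')(i) makes $f^\kappa\ge0$ on $[0,\delta\psi]$ for $\kappa=C_\delta$. The concave-truncation argument with $\rho_n$ and Proposition~\ref{prop.if1}, carried out for the Doob transform, then shows that $(u/\psi)\wedge\delta$ is supermedian. Positivity improvement of the resolvent gives $u\gg0$.

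The delicate point is whether, in the blow-down of Step 4, the uniform domination of $f_j^-(\cdot,u_j)/c_j$ away from $\{u_j\le\delta\psi\}$ can really be obtained from (F3')(ii) alone. If not, I would instead construct a fixed lower barrier: a strictly positive solution for a nonlinearity $g\le f_j$ for all $j$, built by modifying $f$ only on $\{y\le\delta\psi\}$. Comparison via Proposition~\ref{prop4.2} would then bound every $u_j$ from below.
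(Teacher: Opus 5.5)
Your Steps 1--3 are sound. The map $f_j:=f\vee(-jy)$ lies in $\mathscr F_\psi$ for large $j$, with $a_0(f_j)=a_0$ and $a_\infty(f_j)=a_\infty\vee(-j)$. The selection $\mathcal S_\psi$ yields a decreasing sequence, and the limit passage works: for uniformity of the tail bound in $j$, use $\{u_j>M\psi\}\subset\{u_{j_0}>M\psi\}$, and choose $M$ with $m(\{u=M\psi\})=0$.

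\textbf{The gap is Step~4.} Excluding $u\equiv0$ is the whole difficulty of the corollary, and your blow-down does not do it. Step~2 of Theorem~\ref{th.mainm} relied on a domination $|f_{k,n}(\cdot,u_{k,n})|/c_n\le C\,w_{k,n}$ with $C$ bounded. That domination used two features you no longer have: the positive part was truncated, $f^+_{k,n}\le ky$, and $f^-$ was bounded on bounded $y$-ranges uniformly in $x$ by (F3).

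For your $f_j$ one has $f_j^+=f^+$, and the only bound is $f^+(\cdot,u_j)/c_j\le h/c_j+\lambda w_j$, which blows up as $c_j\to0$. No bound $f^+\le Ky$ near $y=0$ is assumed; indeed $f(\cdot,0)>0$ is allowed. So you get no uniform integrability of $(w_j^p)$, and $\|w_j\|=1$ does not prevent the a.e.\ limit $w$ from vanishing. In that case the eigenfunction pairing only gives $0\ge0$.

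On $\{u_j>\delta\psi\}$ the negative part is controlled only by $F_M/c_j$, or by $\delta^{-1}F_M w_j/\psi$ with $F_M$ merely in $L^p$. Since $u_j$ need not be bounded, there is no fixed $M$ anyway.

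Your fallback does not help. Any $g$ with $g\le f_j$ for all $j$ satisfies $g\le f$, so it inherits the uncontrolled negative part of $f$ on $\{y>\delta\psi\}$. Such a $g$ is not in $\mathscr F_\psi$, and producing a strictly positive solution for it is the original problem.

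\textbf{What is missing} are the two devices of the paper's proof.

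First, truncate the positive part too. After the Doob transform and the shift, take $f_{k,n}:=\min\{f^+,ky,k\}-\min\{f^-,ny\}\in\mathscr F_1$ and $u_{k,n}:=\mathcal S_1(f_{k,n})$. Set $\eta_n:=R(f^+_{k,n}(\cdot,u_{k,n})/c_n)$ and $\zeta_n:=R(f^-_{k,n}(\cdot,u_{k,n})/c_n)$; then $0\le w_{k,n}\le\eta_n\le kRw_{k,n}$. Compactness of $R$ therefore gives uniform integrability without any information on $f^-$. Deny's theorem (Theorem~\ref{lm.upseq}) gives a.e.\ convergence of $\eta_n$ and $\zeta_n$, hence $w_{k,n}\to w_k$ in $L^p$ with $\|w_k\|=1$.

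Second, instead of dominating $f^-$, use the Kato inequality (Proposition~\ref{lm2}) in the truncated form $w_{k,n}\wedge r\ge R_\theta\bigl(\mathbf 1_{\{w_{k,n}\le r\}}(f_{k,n}(\cdot,u_{k,n})+\theta u_{k,n})/c_n\bigr)$. Pairing with $(a_0^l+\lambda_1(a_0^l)+\theta)\varphi_l'$ then involves only the set $\{u_{k,n}\le rc_n\}\subset\{u_{k,n}\le\delta\}$, where the integrand is nonnegative. Fatou's lemma therefore applies and yields $\lambda_1(a_0^l)\langle w_k,\varphi_l'\rangle\ge0$, a contradiction.

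Nontriviality then survives $k\to\infty$ because $u_k$ increases in $k$, and that limit is handled as in Step~3 of Theorem~\ref{th.mainm}. Your one-parameter scheme can be repaired only by adding the $k$-truncation of $f^+$ and this truncated pairing, which brings you back to the paper's argument.
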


\begin{proof}
As in the proof of Theorem \ref{th.main} we may reduce to the case where $(T_t)$ is sub-Markovian and
$\psi\equiv 1$ in (F3'). 
By Proposition \ref{prop.shifted-equation}, we may assume without
loss of generality that
\begin{equation}
\label{eq.localpos}
R1\le 1,\quad \text{and}\quad f(x,y)\ge0,
\qquad x\in E,\quad 0\le y\le\delta.
\end{equation}
Choose $l\ge1$ so large that
\begin{equation}
\label{eq.choose.l}
\lambda_1(a_0^l)<0<\lambda_1(a_\infty^l).
\end{equation}
For $k,n\ge l$, define, for $x\in E$, $y\ge 0$,
\begin{equation}
\label{eq.newfkn}
f_k(x,y)
:=
\min\{f^+(x,y),ky,k\}-f^-(x,y),\quad  f_{k,n}(x,y)
:=f^+_k(x,y)-\min\{f^-(x,y),ny\}.
\end{equation}
Then
\begin{equation}
\label{eq.estmt}
f_{k,n}^+(x,y)\le ky\wedge k,
\qquad
f_{k,n}^-(x,y)\le ny.
\end{equation}
In particular, $f_{k,n}$ satisfies \textup{(F1)--(F3)}.
Moreover, by \eqref{eq.localpos},
\[
a_0(f_{k,n})=a_0\wedge k=a_0^k,\quad a_\infty(f_{k,n})=\bigl(a_\infty\vee(-n)\bigr)\wedge0.
\]
Consequently, for $k,n\ge l$,
\[
\lambda_1(a_0(f_{k,n}))
=
\lambda_1(a_0^k)
\le \lambda_1(a_0^l)<0,
\]
while, by monotonicity of the generalized principal eigenvalue,
\[
\lambda_1(a_\infty(f_{k,n}))
\ge
\lambda_1(a_\infty^n)
\ge
\lambda_1(a_\infty^l)>0.
\]
Thus $f_{k,n}\in\mathscr F_1$.
Set
\[
u_{k,n}:=\mathcal S_1(f_{k,n}),
\qquad k,n\ge l.
\]
Since $f_{k,n}$ is decreasing in $n$ and increasing in $k$, the
monotonicity of $\mathcal S_1$ gives
\begin{equation}
\label{eq.newmon}
u_{k,n}\ge u_{k,n+1},
\qquad
u_{k,n}\le u_{k+1,n},
\qquad k,n\ge l.
\end{equation}

{\bf Step 1. Letting $n\to\infty$.}  For fixed $k$, define
\[
u_k:=\inf_{n\ge l}u_{k,n}.
\]
Since
\[
0\le u_{k,n}\le Rk\le k,
\]
we have
\begin{equation}
\label{eq.untoUK}
u_{k,n}\longrightarrow u_k
\quad\text{a.e. and in }L^p(E;m).
\end{equation}
We first show that $u_k$ is nontrivial. Suppose, to the contrary,
that $u_k=0$. Then
\[
c_n:=\|u_{k,n}\|\searrow0.
\]
Set $w_{k,n}:=u_{k,n}/c_n$. Then 
\[
w_{k,n}
=
R\left(f_{k,n}(\cdot,u_{k,n})/c_n
\right).
\]
Since $f_{k,n}^-(\cdot,u_{k,n})\le n u_{k,n}$ and
$f_{k,n}^+(\cdot,u_{k,n})\le k u_{k,n} $, both parts belong to
$L^p(E;m)$.
Put
\[
\eta_n
:=
R\left(
f_{k,n}^+(\cdot,u_{k,n})/c_n
\right),
\qquad
\zeta_n
:=
R\left(
f_{k,n}^-(\cdot,u_{k,n})/c_n
\right).
\]
Then
\[
0\le \eta_n\le kRw_{k,n},
\qquad
w_{k,n}=\eta_n-\zeta_n.
\]
Since $\|w_{k,n}\|=1$, the sequences $(\eta_n)$ and $(\zeta_n)$ are
bounded in $L^p(E;m)$. Both consist of supermedian functions.
Hence, by Theorem \ref{lm.upseq}, after passing to a subsequence,
$\eta_n$ and $\zeta_n$ converge a.e. Consequently,
\[
w_{k,n}\longrightarrow w_k
\qquad\text{a.e.}
\]
along the same subsequence.
Furthermore,
\[
0\le w_{k,n}\le \eta_n\le kRw_{k,n}.
\]
By compactness of $R$, after passing to a further subsequence,
$(Rw_{k,n})$ converges in $L^p(E;m)$. It follows that
$(w_{k,n}^p)$ is uniformly integrable. Therefore, by Vitali's
theorem,
\begin{equation}
\label{eq.wnstrong}
w_{k,n}\longrightarrow w_k
\quad\text{in }L^p(E;m).
\end{equation}
In particular, $\|w_k\|=1$.
Let $\varphi_l'$ be a dual principal eigenfunction associated with
$-L-a_0^l$, and choose $\theta>0$ so large that
\[
a_0^l+\lambda_1(a_0^l)+\theta\ge0
\qquad\text{on }E.
\]
By Proposition \ref{prop.shifted-equation},
\[
w_{k,n}
=
R_\theta \left [(f_{k,n}(\cdot,u_{k,n})+\theta u_{k,n})/c_n\right].
\]
Fix $r>0$. By Proposition \ref{lm2}
\[
(w_{k,n}-r)^+
\le
R_\theta
\left(
\mathbf1_{\{w_{k,n}>r\}}(f_{k,n}(\cdot,u_{k,n})+\theta u_{k,n})/c_n
\right).
\]
Consequently,
\begin{equation}
\label{eq.truncwn}
w_{k,n}\wedge r
\ge
R_\theta
\left(
\mathbf1_{\{w_{k,n}\le r\}}(f_{k,n}(\cdot,u_{k,n})+\theta u_{k,n})/c_n
\right).
\end{equation}
Pairing  \eqref{eq.truncwn} with the non-negative function $\bigl(a_0^l+\lambda_1(a_0^l)+\theta\bigr)\varphi_l'$ gives
\begin{equation}
\label{eq.truncpair}
\left\langle w_{k,n}\wedge r,\bigl(a_0^l+\lambda_1(a_0^l)+\theta\bigr)\varphi_l'\right\rangle
\ge\left\langle\mathbf1_{\{w_{k,n}\le r\}}w_{k,n}\left(f_{k,n}(\cdot,u_{k,n})/u_{k,n}+\theta\right),\varphi_l'\right\rangle.
\end{equation}
Since, under the contradiction assumption, $u_{k,n}\to0$ a.e. as $n\to \infty$, we have,  by the definition of $a_0$,
\[
\liminf_{n\to\infty}
\frac{f_{k,n}(x,u_{k,n}(x))}{u_{k,n}(x)}
\ge
a_0(x)\wedge k
\ge a_0^l(x).
\]
Observe that
\[
\{w_{k,n}\le r\}
=
\{u_{k,n}\le r c_n\}.
\]
Since $c_n\to0$, for every fixed $r$ we have
$r c_n\le\delta$ for all sufficiently large $n$. Hence, by
\eqref{eq.localpos}, the integrand on the right-hand side of
\eqref{eq.truncpair} is nonnegative for all sufficiently large $n$.
Choose $r>0$ such that
\[
m(\{w_k=r\})=0.
\]
Using \eqref{eq.wnstrong}, Fatou's lemma, and
\eqref{eq.truncpair}, we obtain
\begin{equation}
\label{eq.limit.r}
\left\langle
w_k\wedge r,
\bigl(a_0^l+\lambda_1(a_0^l)+\theta\bigr)\varphi_l'
\right\rangle\ge\left\langle\mathbf1_{\{w_k<r\}}w_k(a_0^l+\theta),\varphi_l'\right\rangle.
\end{equation}
Taking a sequence $r_j\uparrow\infty$ such that
$m(\{w_k=r_j\})=0$ and letting $j\to\infty$ in
\eqref{eq.limit.r}, we get
\[
\left\langle
w_k,
\bigl(a_0^l+\lambda_1(a_0^l)+\theta\bigr)\varphi_l'
\right\rangle
\ge
\left\langle
w_k(a_0^l+\theta),
\varphi_l'
\right\rangle .
\]
Thus
\[
\lambda_1(a_0^l)
\langle w_k,\varphi_l'\rangle
\ge0,
\]
a contradiction.
Hence $u_k$ is nontrivial.  
We next pass to the limit as $n\to\infty$ in the equation for
$u_{k,n}$. From \eqref{eq.estmt}--\eqref{eq.untoUK}
and the dominated convergence theorem we infer that
\begin{equation}
\label{eq.posnlim}
f_{k,n}^+(\cdot,u_{k,n})=f_k^+(\cdot,u_{k,n})
\longrightarrow
f_k^+(\cdot,u_k)
\quad\text{in }L^p(E;m)
\end{equation}
(see \eqref{eq.newfkn}). It remains to treat the negative part. Fix $M>0$ such that $m(\{u_k=M\})=0$. On
$\{u_{k,n}\le M\}$,
\[
f_{k,n}^-(\cdot,u_{k,n})
\le
F_M,
\]
and \textup{(F3')(ii)} together with dominated convergence yields
\begin{equation}
\label{eq.localq}
\mathbf1_{\{u_{k,n}\le M\}}f_{k,n}^-(\cdot,u_{k,n})
\longrightarrow
\mathbf1_{\{u_k\le M\}}f^-(\cdot,u_k)
\quad\text{in }L^p(E;m).
\end{equation}
On the other hand, Proposition \ref{lm2} gives
\[
(u_{k,n}-M)^+
+
R\left(
\mathbf1_{\{u_{k,n}>M\}}f_{k,n}^-(\cdot,u_{k,n})
\right)
\le
R\left(
\mathbf1_{\{u_{k,n}>M\}}
f_k^+(\cdot,u_{k,n})
\right).
\]
Since $f_k^+(\cdot,u_{k,n})\le k u_{k,n}$ and
$u_{k,n}\le u_{k,l}$,
\begin{equation}
\label{eq.tailn}
R\left(
\mathbf1_{\{u_{k,n}>M\}}f_{k,n}^-(\cdot,u_{k,n})
\right)
\le
kR\left(
\mathbf1_{\{u_{k,l}>M\}}u_{k,l}
\right).
\end{equation}
The right-hand side converges to zero in $L^p(E;m)$ as
$M\to\infty$, uniformly in $n$.
Using the kernel representation of $R$ and Fatou's lemma, we also
obtain
\[
R\left(
\mathbf1_{\{u_k>M\}}f^-(\cdot,u_k)
\right)
\le
kR\left(
\mathbf1_{\{u_{k,l}>M\}}u_{k,l}
\right).
\]
Combining this estimate with \eqref{eq.localq} and
\eqref{eq.tailn}, first letting $n\to\infty$ and then
$M\to\infty$, yields
\begin{equation}
\label{eq.negpotlim}
Rf_{k,n}^-(\cdot,u_{k,n})
\longrightarrow
Rf^-(\cdot,u_k)
\quad\text{in }L^p(E;m).
\end{equation}
Passing to the limit in
\[
u_{k,n}+Rf_{k,n}^-(\cdot,u_{k,n})
=
Rf_k^+(\cdot,u_{k,n})
\]
and using \eqref{eq.untoUK}, \eqref{eq.posnlim}, and
\eqref{eq.negpotlim}, we obtain
\begin{equation}
\label{eq.uk}
u_k+Rf^-(\cdot,u_k)=Rf_k^+(\cdot,u_k).
\end{equation}
We now show that $u_k$ is strictly positive. Since
$f_k(x,y)\ge0$ for $0\le y\le\delta$, the localized concave
truncation argument with the function $\rho_n$ used in Step~2 of the proof of
Theorem \ref{th.mainm} 
applies to \eqref{eq.uk} and shows that $u_k\gg0$.

{\bf Step 2. Letting $k\to\infty$.}  By \eqref{eq.newmon},
\[
u_k\le u_{k+1},
\qquad k\ge l.
\]
Suppose first that $\sup_{k\ge l}\|u_k\|<\infty$. Then
\[
u_k\uparrow u
\quad\text{a.e. and in }L^p(E;m)
\]
for some $u\in L^p(E;m)$. Since every $u_k$ is strictly positive,
$u\gg0$. By \textup{(F1)},
\[
f_k^+(\cdot,u_k)\le f^+(\cdot,u_k)\le h+\lambda u,
\]
and therefore
\[
f_k^+(\cdot,u_k)\longrightarrow f^+(\cdot,u)\quad\text{in }L^p(E;m).
\]
For the negative part we argue as above. For every $M>0$ such that
$m(\{u=M\})=0$, \textup{(F3')(ii)} gives
\[
\mathbf1_{\{u_k\le M\}}f^-(\cdot,u_k)\longrightarrow\mathbf1_{\{u\le M\}}f^-(\cdot,u)\quad\text{in }L^p(E;m).
\]
Moreover, Proposition \ref{lm2} and \textup{(F1)} imply
\[
R\left(\mathbf1_{\{u_k>M\}}f^-(\cdot,u_k)\right)\le 
R\left(\mathbf1_{\{u_k>M\}}f_k^+(\cdot,u_k)\right)\le R\left(\mathbf1_{\{u>M\}}(h+\lambda u)\right).
\]
The last term converges to zero in $L^p(E;m)$ as $M\to\infty$.
Using again the kernel representation and Fatou's lemma for the
limit function, we conclude that
\[
Rf^-(\cdot,u_k)\longrightarrow Rf^-(\cdot,u) \quad\text{in }L^p(E;m).
\]
Passing to the limit in \eqref{eq.uk} gives
\[
u+Rf^-(\cdot,u)=Rf^+(\cdot,u),
\]
so $u$ is a strictly positive solution of \eqref{eq1.1}.
It remains to exclude the possibility that
\[
\sup_{k\ge l}\|u_k\|=\infty.
\]
The argument is identical to that used in Step~3 of the proof of
Theorem~\ref{th.mainm}. Therefore the unbounded alternative cannot occur,
and the proof is complete.
\end{proof}

\section{Necessity and uniqueness under strict monotonicity}
\label{sec8}
Consider a Lusin topological space $Y$  and a bounded Borel measure $\mu$  on $Y$.
Let $(S_t)$ be a sub-Markovian $C_0$-semigroup on  $L^p(Y;\mu)$ with generator $(A,\mathscr D(A))$.
 Assume that  the associated  resolvent  $(G_\alpha)$ is compact and positivity improving and that $s(A)<0$.
We write $G:=G_0$.
Suppose further that  there exists a  right Markov process $\mathbb X$ that represents $(S_t)$.
\begin{lemma}
\label{lm.lm.lm}
For  any  $f\in p\mathcal B(Y)$, with $\int_Y|f|^p\,d\mu<\infty$, 
and any $t>0$,
\[
S_tf(x)=\mathbb E_xf(X_t)\quad\mu\text{-a.e.}
\]
In particular,  if $f,g\in p\mathcal B(Y)$
are equal $\mu$-a.e., then $P_tf=P_tg$ $\mu$-a.e.
(see the convention adopted in Remark  \ref{rem.not}).
\end{lemma}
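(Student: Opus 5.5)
The plan is to pass from the resolvent identity \eqref{eq.equiv123}, which is all that Definition~\ref{def.rep} provides, to the semigroup identity by inverting the Laplace transform. Both sides are Laplace transforms in $t$. On one side, $t\mapsto S_tf$ is strongly continuous in $L^p(Y;\mu)$ and
\[
G_\alpha f=\int_0^\infty e^{-\alpha t}S_tf\,dt \quad\text{for }\alpha>s(A).
\]
On the other side, $t\mapsto P_tf(x)=\mathbb E_xf(X_t)$ is measurable and
\[
U_\alpha f(x)=\int_0^\infty e^{-\alpha t}P_tf(x)\,dt .
\]
First I reduce to bounded $f$. If $f\wedge j$ is handled for every $j$, then $S_t(f\wedge j)\to S_tf$ in $L^p$ and $P_t(f\wedge j)\nearrow P_tf$ pointwise by monotone convergence, so the identity passes to the limit $\mu$-a.e.

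For bounded positive $f$, I pair both sides with an arbitrary positive $\eta\in L^{p'}(Y;\mu)$, or with $\eta=\mathbf 1_B$ for Borel $B$. Set
\[
\Phi(t):=\langle S_tf,\eta\rangle,\qquad \Psi(t):=\int_Y P_tf\,\eta\,d\mu .
\]
Then $\Phi$ is continuous. By Fubini and \eqref{eq.equiv123}, the Laplace transforms of $\Phi$ and $\Psi$ coincide for all $\alpha>s(A)\vee0$. Since $P_tf\le\|f\|_\infty$ and $\mu$ is finite, $\Psi$ is bounded. Uniqueness of the Laplace transform (Lerch's theorem) then gives $\Phi=\Psi$ for Lebesgue-a.e. $t$.

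The main obstacle is upgrading this to every $t>0$, since $\Psi$ need not be continuous a priori. I would use right-continuity of paths and first prove the identity for $f$ bounded continuous. In that case $t\mapsto f(X_t)$ is right-continuous, so $P_tf(x)$ is right-continuous in $t$ by bounded convergence, and hence so is $\Psi$. Taking $t_n\downarrow t$ inside the full-measure set where $\Phi=\Psi$, continuity of $\Phi$ yields $\Phi(t)=\Psi(t)$. Varying $\eta$ over a countable family of indicators of a generating algebra then gives $S_tf=P_tf$ $\mu$-a.e. for each fixed $t$.

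Finally I extend from bounded continuous $f$ to all bounded Borel $f$ by a monotone class argument. The class of bounded Borel $f$ satisfying the identity is a vector space, contains the bounded continuous functions, and is closed under bounded monotone limits: on the left use dominated convergence in $L^p$, on the right use monotone convergence for $P_t$. Since $Y$ is Lusin, hence metrizable, the bounded continuous functions generate $\mathcal B(Y)$, so the class is all of $b\mathcal B(Y)$.

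The second assertion follows directly. If $f=g$ $\mu$-a.e., then $S_tf=S_tg$ in $L^p$, so $P_tf=P_tg$ $\mu$-a.e. For unbounded $f,g$ I apply this to the truncations $f\wedge j$, $g\wedge j$ and use monotone convergence.
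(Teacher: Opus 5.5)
Your proof is correct. It shares the paper's skeleton: Laplace inversion of \eqref{eq.equiv123}, combined with right-continuity in $t$ on a class of regular functions, then an extension step, then truncation. The regular class and the extension step, however, are different.

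The paper's route is as follows. It takes $f=U_\alpha g$ with $g$ bounded, and gets right-continuity of $t\mapsto\mathbb E_xf(X_t)$ from the right-process axiom for $\alpha$-excessive functions. It then reaches a general bounded $f$ through $f_n=\alpha_nU_{\alpha_n}f$, which converges to $f$ in $L^p$ and $\mu$-a.e. The null-set statement is derived afterwards from the dual principal eigenfunction, via
\[
\int_Y P_t|f-g|\,\varphi_1'\,d\mu=e^{-\lambda_1(-A)t}\int_Y|f-g|\,\varphi_1'\,d\mu=0.
\]

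Your route is different at each of these points. You take bounded continuous functions, with right-continuity coming directly from right-continuous paths in $Y_\partial$ (since $\partial$ is isolated). You close up with the functional monotone class theorem, which uses metrizability of the Lusin space $Y$. The null-set statement then follows from the first assertion by truncation, with no eigenfunction needed.

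What your route buys is that the closure step uses only everywhere-monotone limits, so $P_tf_n\uparrow P_tf$ holds pointwise by monotone convergence. In the paper's approximation, $f_n\to f$ only off a $\mu$-null set $N$. Passing to the limit in $\mathbb E_xf_n(X_t)$ by dominated convergence therefore tacitly requires $\mathbb P_x(X_t\in N)=0$ for $\mu$-a.e.\ $x$. That is precisely the second assertion of the lemma, which is established only afterwards. Your ordering proves the first assertion without this input and obtains null-set invariance as a genuine corollary.

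The paper's route, in turn, stays entirely on the resolvent side. Its regular class is generated by $U_\alpha$ itself, and the approximation $\alpha_nU_{\alpha_n}f\to f$ comes from strong continuity of the resolvent. It therefore uses no monotone class argument or topological structure of $Y$ beyond the right-process axioms.
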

\begin{proof}
Suppose that $f=U_\alpha g$ for some bounded $g\in p\mathcal B(Y)$.
Then, by Lemma \ref{lm.basic}, $\mathbb E_xf(X_t)\to \mathbb E_xf(X_s),\, t\searrow s$ for any $x\in Y$ and $s\ge 0$.
This continuity, combined with \eqref{eq.equiv123} and well-known properties of the 
Laplace  transform, implies that for any $t\ge 0$
\begin{equation}
\label{eq.ulel}
S_tf(x)=\mathbb E_xf(X_t)\quad\mu\text{-a.e.}
\end{equation}
Now, let $f\in p\mathcal B(Y)$. Then, using once again \eqref{eq.equiv123}, we obtain that there exists 
an increasing  sequence $(\alpha_n)$ tending to infinity such that $f_n:=\alpha_n U_{\alpha_n} f\to f$ $\mu$-a.e. and 
in $L^p(Y;\mu)$. By \eqref{eq.ulel}
\[
S_tf_n(x)=\mathbb E_xf_n(X_t)\quad\mu\text{-a.e.}
\]
Consequently, by continuity of $S_t$ and the  dominated convergence theorem, 
we infer that the asserted equality holds for bounded functions in $p\mathcal B(Y)$. 
The general result is then obtained easily by using truncations. 
Finally, let $\varphi_1'$ be a dual principal eigenfunction  associated with   $A$.  Suppose that 
$f,g\in p\mathcal B(Y)$ are equal $\mu$-a.e. Then, by what has been already proved,
\[
\int_Y (P_t|f-g|)\varphi'_1\,d\mu=\int_Y (S_t|f-g|)\varphi'_1\,d\mu=\int_Y e^{-\lambda_1(-A)t}|f-g|\varphi'_1\,d\mu=0.
\]
Thus, $P_tf=P_tg$ $\mu$-a.e. 
\end{proof}

As before, the transition function of $\mathbb X$ allows us to extend each $G_\alpha$ to nonnegative measurable 
functions; throughout this section, this extension is denoted again by $G_\alpha$.
Let  $V:Y\to \mathbb R$ be a   Borel measurable function such that 
\begin{equation}
\label{eq.c1}
V^-\in \mathcal B_b(Y),\quad \mathbb P_x\left(\int_0^t V(X_s)\,ds<\infty,\,\text{ for every } t<\zeta\right)
=1\quad \mu\text{-a.e. }x\in Y.
\end{equation}
By Lemma \ref{lm.lm.lm} and \cite[Theorem 3.14, Lemma 3.15]{Getoor} the following family  of operators can be regarded, 
with the convention adopted in Remark \ref{rem.not}, as a strongly continuous semigroup on $L^p(Y;\mu)$,
\begin{equation}
\label{eq.fkf1}
P^V_tf(x):=\mathbb E_x\left[e^{-\int_0^t V(X_s)\,ds}f(X_t)\right].
\end{equation}
This means  that whenever $f,\tilde f\in p\mathcal B(Y)$ and $f=\tilde f$ $\mu$-a.e., then $P^V_tf=P^V_t\tilde f$ $\mu$-a.e.
Furthermore,   if for $f\in L^p(Y;\mu)$  we define $T^V_tf$ as the $\mu$-equivalence class of the function $g$ 
given by   $g:=P^V_t \tilde f^+-P^V_t \tilde f^-$ on the set $\{x\in Y: P^V_t \tilde f^+(x)+P^V_t \tilde f^-(x)<\infty\}$
and zero otherwise for a  representative $\tilde f$ of the  $\mu$-equivalence class  $f$, then $(T^V_t)$ 
is a   strongly continuous  semigroup  on $L^p(Y;\mu)$.
We  denote by $A-V$ the operator generated by the semigroup  $(T^V_t)$   on  $L^p(Y;\mu)$, and by $(G^V_\alpha)$ its resolvent.

\begin{theorem}
\label{th.pert1}
Assume \eqref{eq.c1}. Then the following statements hold.
\begin{enumerate}
\item[(i)] $(G^V_\alpha)$ is positivity improving, and 
  for any $\alpha>s(A-V)$ the operator  $(G^V_\alpha)^2$ is compact.
\item[(ii)] For any $\alpha>s(A-V)$, the spectral radius of $G^V_\alpha$ (denoted by $r(G^V_\alpha)$) satisfies:
$r(G^V_\alpha)>0$ and $r(G^V_\alpha)$ is the unique eigenvalue of $G^V_\alpha$
with strictly positive eigenfunction.
\item[(iii)] For any $\alpha>s(A-V)$, 
\[
r(G^V_\alpha)=\frac{1}{\alpha-s(A-V)}.
\]
\item[(iv)]  If $V_1\le V_2$ satisfy the same conditions as $V$, then $s(A-V_1)\ge s(A-V_2)$. 
If additionally $\mu(\{V_1<V_2\})>0$, then $s(A-V_1)>s(A-V_2)$.
\item[(v)] If $V_k:=V\wedge k$, $k\geq1$, then
\[
s(A-V_k)\downarrow s(A-V)
\qquad\text{as }k\to\infty.
\]
\end{enumerate}
\end{theorem}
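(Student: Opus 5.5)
The plan is to reduce everything to bounded perturbations and pass to limits via monotone truncation. Write $V=V^+-V^-$ with $V^-$ bounded. For $V_k:=V\wedge k$, Corollary \ref{cor.pert1} applies since $V_k\in L^\infty$: $A-V_k$ generates a positive semigroup with compact, positivity-improving resolvent, and by Feynman--Kac (Lemma \ref{lm.lm.lm} together with the Trotter formula) this semigroup coincides with $(T^{V_k}_t)$. By monotone convergence in \eqref{eq.fkf1}, $P^{V_k}_t f\downarrow P^V_t f$ for $f\ge0$. Taking Laplace transforms gives $G^{V_k}_\alpha f\downarrow G^V_\alpha f$, first for $\alpha$ large (say $\alpha>\|V^-\|_\infty$, where all these operators are dominated by $G^{-\|V^-\|_\infty}_\alpha$) and then for every $\alpha>s(A-V)$ by analytic continuation via the Neumann series.

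For (i), positivity improvement follows from irreducibility. A closed ideal $I_B=\{f: f=0 \text{ off } B\}$ is $T^V$-invariant iff $T^V_t\mathbf 1_B\le0$ off $B$. I plan to show that $\mathbf 1_{B^c}T^V_t\mathbf 1_B=0$ forces $\mathbf 1_{B^c}T_t\mathbf 1_B=0$. The reason is that the weight $e^{-\int_0^tV}$ is strictly positive $\mathbb P_x$-a.s. on $\{t<\zeta\}$ by \eqref{eq.c1}, so $\mathbb E_x[\mathbf 1_B(X_t)]>0$ implies $\mathbb E_x[e^{-\int V}\mathbf 1_B(X_t)]>0$. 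Irreducibility of $(S_t)$ then finishes, via \cite[Proposition 14.10]{BatkaiKramarFijavzRhandi2017}.

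For compactness of $(G^V_\alpha)^2$, I would first use the resolvent identity to reduce to a single convenient $\alpha$. Since $0\le G^V_\alpha\le G^{-c}_\alpha=G_{\alpha-c}$ with $c=\|V^-\|_\infty$, domination alone does not give compactness on $L^p$. However, the Dodds--Fremlin/Aliprantis--Burkinshaw theorem says that a positive operator dominated by a compact one has compact square (or cube, for general $p$). For $1<p<\infty$ the square suffices, since $L^p$ and its dual have order continuous norms. The case $p=1$ is the delicate point, and there I would go through the dual eigenfunction weight.

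For (ii) and (iii), I would apply Krein--Rutman / de Pagter to the compact, positivity-improving operator $(G^V_\alpha)^2$. This gives $r((G^V_\alpha)^2)>0$, which is an eigenvalue with a strictly positive eigenfunction $u$ that is unique up to scaling. Since $G^V_\alpha$ commutes with $(G^V_\alpha)^2$, simplicity gives $G^V_\alpha u=cu$ with $c>0$, and $c=r(G^V_\alpha)$. Uniqueness among strictly positive eigenfunctions follows by pairing with the dual eigenfunction of $(G^V_\alpha)^{*2}$, exactly as in Proposition \ref{prop.tool}(vi). The formula in (iii) is then the standard spectral-mapping relation for resolvents of positive semigroups: $r(G_\alpha)=1/(\alpha-s)$ for $\alpha>s$ (Engel--Nagel VI.1.13).

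For (iv), the semigroups are ordered by $T^{V_2}_t\le T^{V_1}_t$, so $G^{V_2}_\alpha\le G^{V_1}_\alpha$ and hence $r$ is monotone; (iii) then gives the non-strict inequality. For strictness, take the eigenfunction $u_2\gg0$ of $G^{V_2}_\alpha$ and the dual eigenfunction $\varphi_1\gg0$ of $G^{V_1}_\alpha$. The resolvent identity yields
$$(r_1-r_2)\langle u_2,\varphi_1\rangle=\langle G^{V_1}_\alpha (V_2-V_1)G^{V_2}_\alpha u_2,\varphi_1\rangle=r_2r_1\langle (V_2-V_1)u_2,\varphi_1\rangle>0,$$
with truncation of $V_2-V_1$ used to justify the pairing.

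For (v), monotonicity in $k$ follows from (iv), so $s_k:=s(A-V_k)\downarrow s_\infty\ge s(A-V)$. Suppose $s_\infty>s(A-V)$, and fix $\alpha$ with $s(A-V)<\alpha$. Normalize principal eigenfunctions $u_k$ of $G^{V_k}_\alpha$ so that $r_k u_k=G^{V_k}_\alpha u_k$ with $\|u_k\|=1$. Using the monotone convergence $G^{V_k}_\alpha\downarrow G^V_\alpha$ together with domination by $G^{V_1}_\alpha$, whose square is compact, I would extract a limit $u\ge0$ with $\|u\|=1$ and $G^V_\alpha u=\lim r_k\, u$. By uniqueness in (ii), this forces $\lim r_k=r(G^V_\alpha)$, contradicting the assumption.

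The main obstacle is this compactness extraction in (v), and likewise the $p=1$ case of (i). Concretely, the task is to show strong convergence $G^{V_k}_\alpha u_k\to G^V_\alpha u$, combining uniform integrability from domination with a.e. convergence obtained via the Deny-type Theorem \ref{lm.upseq}.
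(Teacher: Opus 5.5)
Parts (i)--(iv) are essentially fine (see the remarks at the end). In (v), however, the step you flag as the ``main obstacle'' is a genuine gap: you never show that the limit $u$ of the normalized eigenfunctions satisfies $G^V_\alpha u=\rho u$ with $\rho=\lim r_k$.

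The route you suggest, strong convergence of $G^{V_k}_\alpha u_k$ via a.e.\ convergence from Theorem~\ref{lm.upseq}, does not apply as stated. That theorem concerns supermedian functions of the unperturbed sub-Markovian semigroup. The only comparison available is $S_t\le e^{kt}T^{V_k}_t$, coming from $V_k\le k$, and it gives only $S_tu_k\le e^{(k+s_k)t}u_k$, with a rate that blows up in $k$. So there is no apparent fixed shift of $(S_t)$ for which all $u_k$ are supermedian. The $u_k$ are supermedian for a shift of $(T^V_t)$, but that resolvent is only known to have compact square, so Deny's theorem would have to be re-proved there. Moreover, a weak limit does not give $\|u\|=1$. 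Only $u\neq0$ is needed, and you do not say how you obtain it.

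The missing idea is that strong convergence is unnecessary: order plus weak convergence suffices. Fix $j$ and $\alpha>s(A-V_j)$, not merely $\alpha>s(A-V)$. For $\alpha\le s(A-V_k)$ the operator $G^{V_k}_\alpha$ is not a positive resolvent. For the same reason, your ``analytic continuation to every $\alpha>s(A-V)$'' of $G^{V_k}_\alpha\downarrow G^V_\alpha$ presupposes (v).

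Since $V_n\le V_k\le V$ for $k\ge n\ge j$, you have the sandwich
\[
G^V_\alpha u_k\le G^{V_k}_\alpha u_k=r_ku_k\le G^{V_n}_\alpha u_k .
\]
This passes to weak limits, giving $G^V_\alpha u\le\rho u\le G^{V_n}_\alpha u$. Letting $n\to\infty$ and using the strong convergence $G^{V_n}_\alpha u\to G^V_\alpha u$ then yields $G^V_\alpha u=\rho u$.

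Weak relative compactness of $(u_k)$ comes from reflexivity for $p>1$. For $p=1$ it follows from
\[
r(G^V_\alpha)^2u_k\le r_k^2u_k=(G^{V_k}_\alpha)^2u_k\le (G^{V_j}_\alpha)^2u_k ,
\]
together with compactness of $(G^{V_j}_\alpha)^2$ and Dunford--Pettis. The same inequality shows $u\neq0$, because a compact operator maps weakly null sequences to norm null ones and $r(G^V_\alpha)>0$ by (ii). Positivity improvement then gives $u\gg0$, so (ii) forces $\rho=r(G^V_\alpha)$, and (iii) yields (v). This is the paper's argument.

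Two smaller points.

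First, the case $p=1$ in (i) is not delicate. If $0\le S\le T$ with $T$ compact, then $S^2$ is compact as soon as the Banach lattice itself has order continuous norm (Aliprantis--Burkinshaw), and $L^1$ does. This is exactly what the paper invokes (Meyer-Nieberg, Cor.~3.7.15), after your reduction to one $\alpha$ via $0\le G^V_\alpha\le G_{\alpha-\|V^-\|_\infty}$. For $1<p<\infty$, Dodds--Fremlin even gives compactness of $G^V_\alpha$ itself, so your remark that domination alone is insufficient is inaccurate there.

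Second, in (iv) your route differs from the paper's, which simply cites Arendt--Daners. The identity $G^{V_1}_\alpha f-G^{V_2}_\alpha f=G^{V_1}_\alpha\bigl((V_2-V_1)G^{V_2}_\alpha f\bigr)$ holds for $f\ge0$ as an identity of positive measurable functions, by the Feynman--Kac representation. Pairing with the strictly positive dual eigenfunction of $G^{V_1}_\alpha$, extended to positive measurable functions by monotone convergence, gives the strict inequality. This is a valid, self-contained substitute for the citation.
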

\begin{proof}
(i) The compactness of   $(G^V_\alpha)^2$  follows from \cite[Corollary 3.7.15]{MeyerNieberg1991}, 
while irreducibility is a consequence of the 
probabilistic interpretation of the semigroup. (ii) follows from \cite[Theorem~V.5.2(iv) and the Corollary on p.~329]{Schaefer1974}. 
For (iii) see, e.g., \cite[(1.8) in Chapter VI]{EngelNagel2000}. (iv) follows from \cite[Theorem 2.1]{AD} and (iii).
To prove (v), set $V_k:=V\wedge k$, and fix $j\geq1$ and  $\alpha>s(A-V_j)$.
By monotonicity of the sequence $(V_k)$
and 
the  Feynman--Kac formula \eqref{eq.fkf1}, we have
\begin{equation}\label{eq.strong-resolvent-truncation}
G_\alpha^{V_k}f\longrightarrow G_\alpha^Vf
\quad\text{in }L^p(Y;\mu)
\end{equation}
for every $f\in L^p(Y;\mu)$. Furthermore,
\begin{equation}\label{eq.order-resolvent-truncation}
0\leq G_\alpha^V
\leq G_\alpha^{V_{k+1}}
\leq G_\alpha^{V_k},\quad k\ge j.
\end{equation}
Set, for $k\ge j$,
\[
r_k:=r(G_\alpha^{V_k}),
\quad
r:=r(G_\alpha^V).
\]
By \eqref{eq.order-resolvent-truncation}, $r_k\downarrow\rho$
for some $\rho\geq r$. By (ii), $r>0$, and for every $k$ there
exists $u_k\gg0$ such that
\[
\|u_k\|=1,
\qquad
G_\alpha^{V_k} u_k=r_ku_k.
\]
Since $G_\alpha^{V_k}\leq G_\alpha^{V_j}$ for $k\ge j$, positivity gives
\[
(G_\alpha^{V_k})^2\leq (G_\alpha^{V_j})^2.
\]
Therefore,
\begin{equation}\label{eq.uk-domination}
0\leq r^2u_k
\leq r_k^2u_k
=
(G_\alpha^{V_k})^2u_k
\leq (G_\alpha^{V_j})^2u_k.
\end{equation}
We claim that $(u_k)$ has a weakly convergent subsequence. If
$1<p<\infty$, this follows from the reflexivity of $L^p(Y;\mu)$.
If $p=1$, the compactness of $(G_\alpha^{V_j})^2$ implies that the family
$\{(G_\alpha^{V_j})^2u_k:k\geq1\}$
is uniformly integrable. By \eqref{eq.uk-domination}, the sequence
$(u_k)$ is uniformly integrable as well, and hence it is relatively
weakly compact by the Dunford--Pettis theorem.
Passing to a subsequence, still denoted by $(u_k)$, we may therefore
assume that
\[
u_k\rightharpoonup u \quad\text{in }L^p(Y;\mu)
\]
for some $u\geq0$. Moreover, $u\neq0$. Indeed, if $u=0$, then,
by the compactness of $(G_\alpha^{V_j})^2$,
\[
(G_\alpha^{V_j})^2u_k\longrightarrow0\quad\text{in }L^p(Y;\mu).
\]
On the other hand,  by  \eqref{eq.uk-domination}, 
\[
0<r^2=r^2\|u_k\|\leq \|(G_\alpha^{V_j})^2u_k\|,
\]
a contradiction.
By \eqref{eq.order-resolvent-truncation}, 
\[
G_\alpha^{V}u_k\leq G_\alpha^{V_k}u_k=r_ku_k\leq G_\alpha^{V_n}u_k,\quad k\ge n\ge j.
\]
Passing to the weak limit gives
\[
G_\alpha^{V} u\leq\rho u\leq G_\alpha^{V_n}u,\quad n\ge j.
\]
By \eqref{eq.strong-resolvent-truncation}, we conclude that
\[
G_\alpha^{V} u=\rho u.
\]
As $u>0$, $\rho>0$, and $G_\alpha^{V}$ is positivity improving, we have $u\gg0$.
Assertion (ii) now implies that $\rho=r(G_\alpha^{V})$.
Finally, by (iii), we get the asserted convergence.
\end{proof}

\begin{lemma}
\label{th.pert1lm}
Assume that $V$ satisfies \eqref{eq.c1}.
\begin{enumerate}
\item[(i)] Suppose that $u\in L^p(Y;\mu)$ is nontrivial and
$G(|u|V^+)$ is finite $\mu$-a.e. Furthermore, suppose that
\begin{equation*}
u=-G(Vu).
\end{equation*}
Then
\begin{equation*}
u=\alpha G_\alpha^V u
\end{equation*}
for every $\alpha>s(A-V)$. Consequently,
\begin{equation*}
u\in\mathscr D(A-V),\qquad (A-V)u=0,
\end{equation*}
and hence $s(A-V)\ge0$.

\item[(ii)] Suppose that $u\in L^p(Y;\mu)$ is positive and nontrivial,
$G(uV^+)$ is finite $\mu$-a.e., and that there exists a supermedian
function $v$ such that
\begin{equation*}
u=-G(Vu)-v.
\end{equation*}
Then
\begin{equation*}
s(A-V)\ge0.
\end{equation*}
\end{enumerate}
\end{lemma}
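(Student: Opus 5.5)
Part (i): the idea is to show that $u$ solves the resolvent equation for $A-V$, that is, to convert $u=-G(Vu)$ into $u=\alpha G^V_\alpha u$. I would first split $V=V^+-V^-$ and write the hypothesis as
\[
u+G(V^+u)=G(V^-u).
\]
This is meaningful because $G(|u|V^+)<\infty$ a.e. and $V^-$ is bounded. Next I work probabilistically with a fixed $\mathcal U$-excessive version. Exactly as in Lemma \ref{lm.basic}(3), for $\mu$-a.e. $x$ there is a martingale decomposition
\[
u(X_t)=u(x)+\int_0^t V(X_s)u(X_s)\,ds+M^x_t ,
\]
where $M^x$ is a difference of uniformly integrable martingales. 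Set $Z_t:=e^{-\alpha t-\int_0^tV(X_s)ds}$. This is a finite-variation, positive process, and it is bounded because $V^-$ is bounded and $\alpha>s(A-V)\ge -\|V^-\|_\infty$ may be assumed large first. Integration by parts then gives
\[
d\bigl(Z_tu(X_t)\bigr)=-\alpha Z_tu(X_t)\,dt+Z_t\,dM^x_t .
\]
I would localize with stopping times $\tau_k$, take $\mathbb E_x$, and let $k\to\infty$. The boundary term $\mathbb E_xZ_{\tau_k}u(X_{\tau_k})$ is bounded by $\mathbb E_x\int_{\tau_k}^\infty(|V|\,|u|)(X_s)ds$ using the strong Markov property, as in the proof of Proposition \ref{prop.maxsub}, and this tends to $0$. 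What remains is $u(x)=\alpha\,\mathbb E_x\int_0^\infty Z_tu(X_t)\,dt=\alpha G^V_\alpha u(x)$ for large $\alpha$. To extend this to every $\alpha>s(A-V)$ I would use the resolvent identity for $(G^V_\alpha)$, or analytic continuation of $\alpha\mapsto\alpha G^V_\alpha u-u$. Once $u=\alpha G^V_\alpha u$ holds, we get $u\in\mathscr D(A-V)$ with $(A-V)u=\alpha u-\alpha u=0$. So $0$ is an eigenvalue and $s(A-V)\ge0$.

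Part (ii): here the plan is to compare $u$ with the resolvent of $A-V$. The same computation, now with the extra supermedian term $v$ (which contributes an increasing additive functional $B$ through Lemma \ref{lm.basic}(2)), yields only an inequality:
\[
u=\alpha G^V_\alpha u+\mathbb E_x\int_0^\infty Z_t\,dB_t\ \ge\ \alpha G^V_\alpha u .
\]
Suppose, for contradiction, that $s(A-V)<0$. Then I can take $\alpha=0$, provided $0>s(A-V)$ makes the integrals converge, and the identity becomes $u\ge G^V_0(0)=0$, which is not informative. So instead I would argue spectrally. For $\alpha\in(s(A-V),0)$ we have $u\ge\alpha G^V_\alpha u$ with $\alpha<0$, and this is useless in that form. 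Better is to rewrite the inequality as
\[
u\ge(\alpha+\beta)G^V_{\alpha+\beta}u\quad\text{for }\alpha=0,
\]
that is, $\beta G^V_\beta u\le u$ for all $\beta>0$. Pairing with the dual principal eigenfunction $\varphi'$ of $A-V$ (available by Theorem \ref{th.pert1}(ii)) gives
\[
\frac{\beta}{\beta-s(A-V)}\langle u,\varphi'\rangle\le\langle u,\varphi'\rangle ,
\]
which is consistent and gives nothing. So the right route is the identity with $\alpha=0$ itself. Assuming $s(A-V)<0$, $G^V_0$ is bounded and the computation gives
\[
u=G^V_0(\text{stuff})=\mathbb E_x\int_0^\infty e^{-\int_0^tV}\,dB_t .
\]
Pairing with $\varphi'$ then needs the relation of $B$ to $v$. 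I expect this to be the main obstacle. The plan is to use $v=G\nu$ formally, with $\nu$ the measure of $B$, so that $u=G^V_0\nu$ while also $u=-G(Vu)-v$. Combining these via the perturbation identity $G^V_0=G-G(VG^V_0)$ shows that both sides are consistent. So instead I would use a minimality argument: approximate $V$ by $V_k=V\wedge k$ and use Theorem \ref{th.pert1}(v) together with a Perron--Frobenius argument for the nonnegative function $u$ that satisfies $u\ge\beta G^{V}_\beta u$ with $\beta$ slightly below $0$, deriving a contradiction with $r(G^V_\beta)$ from Theorem \ref{th.pert1}(iii).
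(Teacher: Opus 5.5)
\textbf{Part (i).} Your argument is correct and is essentially the paper's. The only difference is that you put the discount $e^{-\alpha t}$ into $Z_t$ and identify $\alpha G^V_\alpha u$ directly, then pass to all $\alpha>s(A-V)$ via $(A-V)u=0$. The paper instead works at a fixed finite $t$, proves $P^V_tu=u$, and reads off the domain and resolvent statements from the semigroup. Both routes work.

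\textbf{Part (ii).} Here there is a genuine gap, caused by a sign error. If $\tilde v(X_t)=\tilde v(x)-B_t+N^x_t$ with $B$ increasing, then $u=-G(Vu)-\tilde v$ gives
\[
u(X_t)=u(x)+\int_0^tV(X_s)u(X_s)\,ds+B_t+\widetilde M^x_t .
\]
So $B$ enters with a plus sign, and integration by parts yields
\[
d\bigl(Z_tu(X_t)\bigr)=-\alpha Z_tu(X_t)\,dt+Z_t\,dB_t+Z_t\,d\widetilde M^x_t .
\]
Taking expectations along a localizing sequence then gives
\[
u(x)+\mathbb E_x\int_0^{\tau_k}Z_t\,dB_t=\mathbb E_xZ_{\tau_k}u(X_{\tau_k})+\alpha\,\mathbb E_x\int_0^{\tau_k}Z_tu(X_t)\,dt .
\]
Hence $u\le\alpha G^V_\alpha u$, not $u\ge\alpha G^V_\alpha u$ as you wrote.

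To remove the boundary term you cannot reuse the bound from (i), since now $u\neq-G(Vu)$. Instead use $u\ge0$ and $v\ge0$ to get $0\le u\le\|V^-\|_\infty Gu$, together with $Gu<\infty$ a.e. This is exactly how the paper obtains its uniform integrability.

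With the correct inequality the conclusion is immediate. Iterating gives $(\alpha G^V_\alpha)^nu\ge u$ for the positive nontrivial $u$, so $r(G^V_\alpha)\ge1/\alpha$. On the other hand, Theorem~\ref{th.pert1}(iii) gives $r(G^V_\alpha)=1/(\alpha-s(A-V))<1/\alpha$ whenever $s(A-V)<0$, a contradiction.

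Your reversed inequality is, as you noticed yourself, compatible with $s(A-V)<0$, so it cannot give the result. The fallback you end with cannot work either. For $\beta\in(s(A-V),0)$ we have $\beta G^V_\beta u\le0$, so $u\ge\beta G^V_\beta u$ holds trivially for every nonnegative $u$, and no Perron--Frobenius contradiction can be extracted from it. As written, part (ii) is not proved.
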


\begin{proof}
Since $V^-\in\mathcal B_b(Y)$, the assumptions imply that
$G(|Vu|)$ is finite $\mu$-a.e. Set $g:=Vu$. By Lemma
\ref{lm.basic}(3), applied separately to $g^+$ and $g^-$, there exists,
for $\mu$-a.e. $x\in Y$, a uniformly integrable martingale $M^x$ such
that
\begin{equation*}
-Gg(X_t)=-Gg(x)+\int_0^t g(X_s)\,ds+M_t^x,\qquad t\ge0,\quad \mathbb P_x\text{-a.s.}
\end{equation*}

\smallskip
\noindent
\textit{Proof of (i).}
Since $u=-Gg$, we obtain
\begin{equation*}
u(X_t)=u(x)+\int_0^t V(X_s)u(X_s)\,ds+M_t^x.
\end{equation*}
Set
\begin{equation*}
Z_t:=\exp\left(-\int_0^tV(X_s)\,ds\right).
\end{equation*}
By the integration-by-parts formula
(see, e.g., \cite[Corollary II.6.2]{Protter}),
\begin{equation}
\label{eq.zzz}
Z_tu(X_t)=u(x)+\int_0^t Z_s\,dM_s^x.
\end{equation}
Since $V^-$ is bounded, $Z$ is bounded on every finite time interval.
Consequently, the stochastic integral on the right-hand side is a
martingale on every finite time interval. Indeed,
fix $T<\infty$, $c:=\|V^{-}\|_{\infty}$ and $H:=\int_0^{\infty}|V u|(X_s) \,ds$.
By the assumptions made, $\mathbb E_xH<\infty$ for $\mu$-a.e. $x\in E$.
Furthermore, 
\[
|Z_\sigma u(X_\sigma)| \leq e^{cT} G(|V u|)(X_\sigma)=e^{cT} \mathbb{E}_x[\int_\sigma^{\infty}
|V u|\left(X_s\right) \, d s | \mathcal{F}_\sigma]
 \leq e^{cT} \mathbb{E}_x\left[H | \mathcal{F}_\sigma\right]
\]
Consequently, taking expectations in \eqref{eq.zzz} gives
\begin{equation*}
u(x)=\mathbb E_x\left[e^{-\int_0^tV(X_s)\,ds}u(X_t)\right]=P_t^Vu(x)
\end{equation*}
for $\mu$-a.e. $x\in Y$ and every $t\ge0$. Hence
\begin{equation*}
P_t^Vu=u,\qquad t\ge0.
\end{equation*}
Since $(P_t^V)$ is a strongly continuous semigroup on $L^p(Y;\mu)$,
it follows that
\begin{equation*}
u\in\mathscr D(A-V),
\qquad
(A-V)u=0.
\end{equation*}
As $u$ is nontrivial, $0$ is an eigenvalue of $A-V$, and therefore
\begin{equation*}
s(A-V)\ge0.
\end{equation*}
Moreover, for every $\alpha>s(A-V)$,
\begin{equation*}
G_\alpha^Vu=\int_0^\infty e^{-\alpha t}P_t^Vu\,dt=\frac{1}{\alpha}u,
\end{equation*}
which proves (i).

\smallskip
\noindent
\textit{Proof of (ii).}
Let $\tilde v$ be the $\mathcal U$-excessive $\mu$-version of $v$
provided by Lemma \ref{lm.basic}(1). By Lemma \ref{lm.basic}(2) and
(4), there exist a positive additive functional $A$ and a local
martingale $N^x$ such that, for $\mu$-a.e. $x\in Y$,
\begin{equation*}
\tilde v(X_t)=\tilde v(x)-A_t+N_t^x,\qquad t\ge0,\quad \mathbb P_x\text{-a.s.}
\end{equation*}
Since
\begin{equation*}
u=-Gg-\tilde v,
\end{equation*}
we obtain
\begin{equation*}
u(X_t)=u(x)+\int_0^tV(X_s)u(X_s)\,ds+A_t+\widetilde M_t^x,\qquad t\ge0,
\end{equation*}
where $\widetilde M^x$ is a local martingale. Applying the
integration-by-parts formula gives
\begin{equation*}
Z_tu(X_t)=u(x)+\int_0^t Z_s\,dA_s+L_t^x,
\end{equation*}
where $L^x$ is a local martingale.
Let $(\tau_n)$ be a localizing sequence for $L^x$. Since $u$ is
positive and $A$ is increasing, we obtain
\begin{equation}
\label{eq.ineq915}
u(x)\le\mathbb E_x\left[Z_{t\wedge\tau_n}u(X_{t\wedge\tau_n})\right].
\end{equation}
Let $\sigma_n:=t\wedge\tau_n$. We claim that
$(Z_{\sigma_n}u(X_{\sigma_n}))_{n\ge1}$ is uniformly integrable
under $\mathbb P_x$ for $\mu$-a.e. $x\in Y$. Indeed, from
$u=-G(Vu)-\tilde v$ and $\tilde v\ge0$ we obtain $u\le \|V^-\|_\infty Gu$.
Moreover,
\begin{equation*}
Z_{\sigma_n}=\exp\left(-\int_0^{\sigma_n}V(X_s)\,ds\right)\le e^{t\|V^-\|_\infty}.
\end{equation*}
Thus it suffices to show that
$(Gu(X_{\sigma_n}))_{n\ge1}$ is uniformly integrable. By the strong
Markov property,
\begin{equation*}
Gu(X_{\sigma_n})=\mathbb E_x\left[\left.\int_{\sigma_n}^{\infty}u(X_s)\,ds\,\right|\,\mathcal F_{\sigma_n}\right]
\le
\mathbb E_x\left[\left.\int_0^{\infty}u(X_s)\,ds\,\right|\,\mathcal F_{\sigma_n}\right].
\end{equation*}
Since $u\in L^p(Y;\mu)$ and $G$ is bounded on $L^p(Y;\mu)$,
$Gu(x)<\infty$ for $\mu$-a.e. $x\in Y$. Hence
$\int_0^\infty u(X_s)\,ds$ is integrable under $\mathbb P_x$ for
$\mu$-a.e. $x\in Y$. The conditional expectations of a fixed
integrable random variable form a uniformly integrable family, and
therefore $(Z_{\sigma_n}u(X_{\sigma_n}))_{n\ge1}$ is uniformly
integrable.
Since $\tau_n\uparrow\infty$, we have $\sigma_n\uparrow t$ and, in
fact, $\sigma_n=t$ for all sufficiently large $n$, $\mathbb P_x$-a.s.
Consequently, letting $n\to\infty$ in \eqref{eq.ineq915} yields
\begin{equation*}
u(x)\le\mathbb E_x\left[e^{-\int_0^tV(X_s)\,ds}u(X_t)\right]=P_t^Vu(x)
\end{equation*}
for $\mu$-a.e. $x\in Y$ and every $t\ge0$.
Suppose, contrary to the assertion, that $s(A-V)<0$. Then, for every
$\alpha>0$ sufficiently large, integration against
$\alpha e^{-\alpha t}\,dt$ gives
\begin{equation*}
u\le\alpha G_\alpha^Vu.
\end{equation*}
Since $u$ is positive and nontrivial, it follows that $r(G_\alpha^V)\ge\frac{1}{\alpha}$.
On the other hand, Theorem \ref{th.pert1}(iii) gives
\begin{equation*}
r(G_\alpha^V)=\frac{1}{\alpha-s(A-V)}<\frac{1}{\alpha},
\end{equation*}
which is a contradiction. Hence $s(A-V)\ge0$.
\end{proof}

\begin{theorem}
\label{th.necessary}
Assume that \textup{(F1)--(F3)} hold and $a_0$ is bounded.
Suppose, moreover, that for $m$-a.e. $x\in E$, the function
\[
(0,\infty)\ni y\longmapsto h(x,y):=\frac{f(x,y)}{y}
\]
is non-increasing and on a set of positive measure the above mapping is strictly decreasing. 
If \eqref{eq1.1} admits a strictly positive
solution $u$, then
\[
\lambda_1(a_0)<0<\lambda_1(a_\infty).
\]
\end{theorem}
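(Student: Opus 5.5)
First I reduce to the sub-Markovian case by the Doob transform of Section~\ref{sec7}. If $u$ solves \eqref{eq1.1}, then $u/\psi$ solves \eqref{eq.mainphi}. The transform $I_\psi$ preserves monotonicity of $y\mapsto f(x,y)/y$, because $I_\psi(f)(x,y)/y=f(x,\psi(x)y)/(\psi(x)y)$. It also preserves $a_0$, $a_\infty$ and the generalized eigenvalues, as shown in the proof of Theorem~\ref{th.main}. By Remark~\ref{rem.main} I may then work with a right Markov process $\mathbb X$ representing $(T_t)$, so Lemma~\ref{th.pert1lm} and Theorem~\ref{th.pert1} are available with $A=L$, $G=R$.

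\emph{Negativity of $\lambda_1(a_0)$.} Set $V:=-h(\cdot,u)=-f(\cdot,u)/u$. Since $u\gg0$, the solution identity reads $u=-R(Vu)$. Monotonicity gives $h(x,u(x))\le a_0(x)$, so $V\ge -a_0$. Because $a_0$ is bounded, $V^-\le a_0^+$ is bounded. The integrability $R(uV^+)=Rf^-(\cdot,u)\in L^p$ is part of the definition of solution, and the pathwise condition \eqref{eq.c1} follows from it. Lemma~\ref{th.pert1lm}(i) then gives $s(L-V)\ge0$. Since $a_0$ is bounded, $\lambda_1(a_0)=\lambda_1(-L-a_0)=-s(L+a_0)$. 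Strict monotonicity on a set of positive measure gives $h(x,u(x))<a_0(x)$ there, so $-a_0\le V$ with $m(\{-a_0<V\})>0$. Theorem~\ref{th.pert1}(iv), applied with $V_1=-a_0$ and $V_2=V$, yields $s(L+a_0)>s(L-V)\ge0$, hence $\lambda_1(a_0)<0$. One subtlety: for $V_2=V$ unbounded above, (iv) is quoted from \cite{AD} and I assume it covers this case. Otherwise I would first truncate $V\wedge k$, which is still strictly bigger than $-a_0$ on a positive-measure set, and use Theorem~\ref{th.pert1}(v).

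\emph{Positivity of $\lambda_1(a_\infty)$.} Here $h(x,u(x))\ge a_\infty(x)$, hence $V\le -a_\infty$. Fix $k$ and set $W_k:=-(a_\infty\vee(-k))=a_\infty^-\wedge k-a_\infty^+$. By (F1), $a_\infty^+\le\lambda$ is bounded, so $W_k$ is bounded. I want $s(L-W_k)<0$ for some $k$, since $\lambda_1(a_\infty^k)=-s(L-W_k)$. Because $W_k\uparrow -a_\infty$, Theorem~\ref{th.pert1}(v) (with $V=-a_\infty$, noting $(-a_\infty)^-=a_\infty^+$ is bounded) gives $s(L-W_k)\downarrow s(L+a_\infty)$. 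It therefore suffices to show $s(L+a_\infty)<0$, i.e. $s(L+a_\infty)<s(L-V)$ where $s(L-V)=0$ from above. The main obstacle is that strictness must come from strict monotonicity, and on the set where $f(x,\cdot)/\cdot$ is strictly decreasing we do get $h(x,u(x))>a_\infty(x)$. Thus $V\le -a_\infty$ with strict inequality on a positive-measure set. Theorem~\ref{th.pert1}(iv) with $V_1=V$, $V_2=-a_\infty$ (possibly $+\infty$-valued where $a_\infty=-\infty$, which I would handle via truncations $-a_\infty\wedge j$ and (v)) gives $s(L+a_\infty)<s(L-V)=0$. Then $\lambda_1(a_\infty)=\sup_k\lambda_1(a_\infty^k)=-s(L+a_\infty)>0$.

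The delicate steps are verifying \eqref{eq.c1} for $V$, which I expect to follow from $\mathbb E_x\int_0^\infty|Vu|(X_s)\,ds<\infty$ and $u\gg0$ along paths via Lemma~\ref{lm.basic}(6), and making the strict comparison survive the truncation limits.
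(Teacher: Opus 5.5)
Your proof of $\lambda_1(a_0)<0$ is correct and is the paper's argument: you compare $-a_0$ with $V=-h(\cdot,u)$ through Theorem~\ref{th.pert1}(iv), and no truncation is needed because $a_0$ is bounded.

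\textbf{The gap is in $\lambda_1(a_\infty)>0$.} Both steps of your main route need $-a_\infty$ to satisfy \eqref{eq.c1}: Theorem~\ref{th.pert1}(iv) with $V_2=-a_\infty$, and (v) with $V=-a_\infty$. Nothing guarantees this. The bound $V\le -a_\infty$ transfers no pathwise integrability upward to $-a_\infty$. Worse, if $a_\infty=-\infty$ on a set of positive measure, then $-a_\infty$ is not even real-valued and $s(L+a_\infty)$ is undefined. This happens for logistic terms such as $f(x,y)=y-y^2$, which satisfy all the hypotheses.

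Your fallback of truncating to $W_j=(-a_\infty)\wedge j$ loses the strictness. The comparison $V\le W_j$ fails on $\{V>j\}$. If you compare $V\wedge j\le W_j$ instead, (iv) only yields $s(L-W_j)<s(L-V\wedge j)$. Its right-hand side is $\ge s(L-V)=0$ and decreases to $0$ by (v). So you get only $\lim_j s(L-W_j)\le 0$, which is exactly the non-strict statement you need to beat.

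\textbf{The missing idea} is to create the strict gap before truncating, using a comparison potential at bounded distance from $V$.
\begin{enumerate}
\item On the set $B$ where $h(x,\cdot)$ is strictly decreasing, $h(\cdot,u)>a_\infty$. So choose $\varepsilon>0$ and $B_1\subset B$ with $m(B_1)>0$ and $h(\cdot,u)-a_\infty\ge 2\varepsilon$ on $B_1$.
\item Put $V_\varepsilon:=V+\varepsilon\mathbf 1_{B_1}$. Then $V_\varepsilon$ satisfies \eqref{eq.c1}, $V\le V_\varepsilon\le -a_\infty$, and $V<V_\varepsilon$ on $B_1$.
\item Now (iv) gives $s(L-V_\varepsilon)<s(L-V)=0$, and (v) gives $s(L-V_\varepsilon\wedge k)\downarrow s(L-V_\varepsilon)$.
\item Since $V_\varepsilon\wedge k\le(-a_\infty)\wedge k=-a_\infty^k$, you get $s(L+a_\infty^k)\le s(L-V_\varepsilon\wedge k)<0$ for large $k$, i.e. $\lambda_1(a_\infty^k)>0$.
\end{enumerate}
This is what the paper does.

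Two smaller points.
\begin{enumerate}
\item You use $s(L-V)=0$, but Lemma~\ref{th.pert1lm}(i) alone gives only $s(L-V)\ge0$. The reverse inequality comes from Theorem~\ref{th.pert1}(ii),(iii), since $u\gg0$ satisfies $\alpha G^V_\alpha u=u$.
\item Your plan to verify the pathwise part of \eqref{eq.c1} from $u\gg0$ along paths is unjustified, because $u>0$ a.e. gives no positive lower bound for $u(X_s)$ on $[0,t]$. Use (F3)$_1$ instead: it gives $h^-(x,y)\le C_\delta+\delta^{-1}f^-(x,y)$ for $y>0$. Hence $\int_0^tV^+(X_s)\,ds\le C_\delta t+\delta^{-1}\int_0^tf^-(X_s,u(X_s))\,ds$, and the right-hand side is finite almost surely because $Rf^-(\cdot,u)\in L^p(E;m)$.
\end{enumerate}
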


\begin{proof}
By the Doob transform introduced in Section \ref{sec7}, with the
function $\psi$ appearing in \textup{(F3)$_\psi$}, we may assume
without loss of generality that $(T_t)$ is sub-Markovian and
\textup{(F3)$_1$} holds. Indeed, this transform converts
\textup{(F3)$_\psi$} into \textup{(F3)$_1$} and preserves
the strict monotonicity of $y\mapsto f(x,y)/y$, as well as
the functions $a_0$, $a_\infty$ and the corresponding generalized
principal eigenvalues.
By Remark \ref{rem.main}, we may further assume
that $E$ is a Lusin topological space, $\mathcal A=\mathcal B(E)$, and
that there exists a right Markov process $\mathbb X$ representing
$(T_t)$.
Due to the assumptions made there exists a set $B\in\mathcal A$ such that $m(B)>0$, abd
\eqref{eq.slopes} exist and
\begin{equation}
\label{eq.slop1298}
a_\infty<h(\cdot,u)<a_0 \qquad m\text{-a.e.},\quad a_\infty<h(\cdot,u)<a_0 \qquad m\text{-a.e. on }B.
\end{equation}
In particular,
\[
h^+(\cdot,u)\in L^\infty(E;m),
\]
which gives  the first part of \eqref{eq.c1} for $V:=-h(\cdot,u)$.
To verify the integrability condition in \eqref{eq.c1}, fix $\delta>0$.
By \textup{(F3)$_1$}, for any  $y>0$,
\[
h^-(x,y)\le C_\delta+\delta^{-1}f^-(x,y).
\]
Since $Rf^-(\cdot,u)\in L^p(E;m)$, 
\[
\mathbb E_x\int_0^\zeta f^-(X_s,u(X_s))\,ds<\infty
\]
for $m$-a.e. $x$. Consequently,
\[
\int_0^t h^-(X_s,u(X_s))\,ds<\infty\quad\mathbb P_x\text{-a.s. on }\{t<\zeta\},
\]
which proves the required part of \eqref{eq.c1}.
Since $u$ is strictly positive, Lemma~\ref{th.pert1lm}(i) and Theorem \ref{th.pert1}, applied
with $A=L$ and $V=-h(\cdot,u)$, yield
\[
s\bigl(L+h(\cdot,u)\bigr)=0.
\]
By \eqref{eq.slop1298} and Theorem~\ref{th.pert1}
\[
0=s\bigl(L+h(\cdot,u)\bigr)<s(L+a_0)=-\lambda_1(a_0).
\]
It remains to prove that $\lambda_1(a_\infty)>0$.
By \eqref{eq.slop1298} and since  $h(\cdot,u)$ is finite a.e., 
there exist $\varepsilon>0$, 
and a measurable set $B_1\subset B$ with $m(B_1)>0$ such that
\[
h(\cdot,u)-a_\infty\ge 2\varepsilon \qquad\text{on }B_1.
\]
Set $h(\cdot,u)_\varepsilon:=h(\cdot,u)-\varepsilon\mathbf1_{B_1}$.
Then
\[
h(\cdot,u)_\varepsilon\le h(\cdot,u)
\]
with strict inequality on a set of positive measure. Moreover,
$-h(\cdot,u)_\varepsilon$ satisfies \eqref{eq.c1}, since it differs from
$-h(\cdot,u)$ by the bounded function $\varepsilon\mathbf1_B$.
By Theorem~\ref{th.pert1}(v),
\[
s\bigl(L+h(\cdot,u)_\varepsilon\vee(-k)\bigr)\downarrow s\bigl(L+h(\cdot,u)_\varepsilon\bigr)<s\bigl(L+h(\cdot,u)\bigr)=0.
\]
Hence, for all sufficiently large $k$,
\[
-\lambda_1(a_\infty^k)=s(L+a_\infty^k)\le s\bigl(L+h(\cdot,u)_\varepsilon\vee(-k)\bigr)<0.
\]
Consequently,
\[
\lambda_1(a_\infty)=\sup_{k\ge1}\lambda_1(a_\infty^k)>0.
\]
\end{proof}

\begin{theorem}
Suppose that $a_0$ is bounded. Assume that  for $m$-a.e. $x\in E$,
\[
(0,\infty)\ni y\longmapsto h(x,y):=\frac{f(x,y)}{y}
\]
is  non-increasing and on a set of positive measure  the above mapping is decreasing.
Then \eqref{eq1.1} admits at most one strictly
positive solution.
\end{theorem}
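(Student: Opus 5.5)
Suppose $u_1,u_2$ are two strictly positive solutions. The plan is to show that each of them produces a zero eigenvalue for a perturbed operator, and then use the strict monotonicity of the principal eigenvalue to force $u_1=u_2$. First we reduce, exactly as in the proof of Theorem~\ref{th.necessary}, to the sub-Markovian case with a representing right Markov process. The Doob transform with the function $\psi$ from (F3) maps strictly positive solutions bijectively onto strictly positive solutions of the transformed problem, and it preserves the monotonicity of $f(x,y)/y$. Set $V_i:=-h(\cdot,u_i)$. Since $a_0$ is bounded and $h$ is non-increasing in $y$, we get $h(\cdot,u_i)\le a_0$, so $V_i^-$ is bounded. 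The integrability part of \eqref{eq.c1} is checked verbatim as in Theorem~\ref{th.necessary}, using (F3)$_1$ and $Rf^-(\cdot,u_i)\in L^p$. The solution identity reads $u_i=-R(V_iu_i)$, so Lemma~\ref{th.pert1lm}(i) gives $u_i\in\mathscr D(L-V_i)$ with $(L-V_i)u_i=0$. Since $u_i\gg0$, Theorem~\ref{th.pert1}(ii),(iii) yields $s(L+h(\cdot,u_i))=0$ for $i=1,2$.

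Next, put $w:=u_1\vee u_2$ and $z:=u_1\wedge u_2$; both are strictly positive. Monotonicity of $h$ gives $h(\cdot,w)\le h(\cdot,u_i)\le h(\cdot,z)$ pointwise. If $m(\{u_1\neq u_2\})>0$, say $m(\{u_1>u_2\})>0$, we would like strict inequality on a set of positive measure, so that Theorem~\ref{th.pert1}(iv) applied to $V=-h(\cdot,u_2)$ and $V'=-h(\cdot,w)$ gives a contradiction. The difficulty is that $h$ is only assumed strictly decreasing in $y$ on a set $B$ of positive measure, and $\{u_1>u_2\}$ may miss $B$. The plan is therefore to show first that $u_1\le u_2$ everywhere unless $h(\cdot,u_1)=h(\cdot,u_2)$ a.e., in the following way.

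By Proposition~\ref{prop.maxsub} (with $\gamma=0$, $g=f$; the negative part is handled via the kernel extension), $w$ is a subsolution: $w=-R(V_ww)-v$ with $V_w:=-h(\cdot,w)$ and $v$ supermedian. Lemma~\ref{th.pert1lm}(ii) then gives $s(L+h(\cdot,w))\ge0$. On the other hand $h(\cdot,w)\le h(\cdot,u_1)$, so by Theorem~\ref{th.pert1}(iv) we have $s(L+h(\cdot,w))\le s(L+h(\cdot,u_1))=0$. Hence $s(L+h(\cdot,w))=0$, and by the strict part of Theorem~\ref{th.pert1}(iv), $h(\cdot,w)=h(\cdot,u_1)=h(\cdot,u_2)$ a.e. (Showing the integrability \eqref{eq.c1} for $V_w$ and finiteness of $R(wV_w^+)$ needs care, but $V_w^+\le V_{u_1}^++V_{u_2}^+$ pointwise suffices.)

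Then $f(\cdot,u_i)=h(\cdot,w)u_i$, so $u_1$ and $u_2$ are both positive eigenfunctions of $L+h(\cdot,w)$ for the eigenvalue $0$. By simplicity of the principal eigenvalue (Theorem~\ref{th.pert1}(ii), the Krein--Rutman part), $u_2=cu_1$ for some $c>0$; we may assume $c\ge1$. If $c>1$, then $h(\cdot,cu_1)=h(\cdot,u_1)$ a.e. Strict decrease on $B$ gives $h(x,cu_1(x))<h(x,u_1(x))$ on $B$, a contradiction. Hence $c=1$. The main obstacle is the step where $w$ is shown to be a subsolution and Lemma~\ref{th.pert1lm}(ii) is applied to it: the paper's notion of subsolution requires $Rf^-(\cdot,w)\in L^p$, which follows from $f^-(\cdot,w)\le f^-(\cdot,u_1)+f^-(\cdot,u_2)$, and the positivity and integrability conditions for $V_w$ must be matched.
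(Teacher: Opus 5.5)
Your proposal is correct and is essentially the paper's proof. You obtain $s(L+h(\cdot,u_i))=0$ from Lemma~\ref{th.pert1lm}(i) and Theorem~\ref{th.pert1}, apply Proposition~\ref{prop.maxsub} and Lemma~\ref{th.pert1lm}(ii) to $u_1\vee u_2$, use the strict comparison in Theorem~\ref{th.pert1}(iv) to get $h(\cdot,u_1\vee u_2)=h(\cdot,u_1)=h(\cdot,u_2)$ a.e., and finish with simplicity of the principal eigenvalue and strict decrease on $B$, exactly as the paper does. The function $u_1\wedge u_2$ and the announced intermediate claim ``$u_1\le u_2$ unless \dots'' are never used and can be deleted. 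Your explicit bounds $f^-(\cdot,u_1\vee u_2)\le f^-(\cdot,u_1)+f^-(\cdot,u_2)$ and $V_w^+\le V_1^++V_2^+$ hold because $u_1\vee u_2$ coincides pointwise with $u_1$ or $u_2$, and they usefully make precise a verification the paper only sketches.
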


\begin{proof}
By the Doob transform used in the proof of Theorem \ref{th.main}, it
suffices to consider the case where $(T_t)$ is sub-Markovian. 
By Remark \ref{rem.main}, we may further assume that $E$ is a Lusin
topological space, $\mathcal A=\mathcal B(E)$, and that there exists a right
Markov process $\mathbb X$ representing $(T_t)$.
Let $u_1,u_2$ be strictly positive solutions to \eqref{eq1.1} and set
\[
u:=u_1\vee u_2.
\]
Since every solution is, in particular, a subsolution, Proposition
\ref{prop.maxsub} shows that $u$ is a subsolution. Thus, for some
supermedian function $v$,
\[
u=Rf(\cdot,u)-v.
\]
Since $f(\cdot,u)=h(\cdot,u)u$, this can be written as
\[
u=R\bigl(h(\cdot,u)u\bigr)-v.
\]
The potential $V:=-h(\cdot,u)$ satisfies \eqref{eq.c1}
(see the reasoning in the proof of Theorem \ref{th.necessary}).
Hence Lemma \ref{th.pert1lm}(ii) yields
\begin{equation}
\label{eq.in167}
s\bigl(L+h(\cdot,u)\bigr)\ge0.
\end{equation}
On the other hand, since $u_i$ is a strictly positive solution,
Lemma \ref{th.pert1lm} and Theorem \ref{th.pert1} give
\[
s\bigl(L+h(\cdot,u_i)\bigr)=0,
\qquad i=1,2.
\]
Since $h(x,\cdot)$ is non-increasing, we have
\[
h(\cdot,u)\le h(\cdot,u_i),\qquad i=1,2.
\]
Therefore, by Theorem~\ref{th.pert1}(iv),
\[
0\le s\bigl(L+h(\cdot,u)\bigr)
\le s\bigl(L+h(\cdot,u_i)\bigr)=0.
\]
The strict comparison assertion in the same theorem implies
\[
h(\cdot,u)=h(\cdot,u_1)=h(\cdot,u_2)
\qquad m\text{-a.e.}
\]
Set $V:=h(\cdot,u)$. By Lemma~\ref{th.pert1lm}(i),
both $u_1$ and $u_2$ belong to $\mathscr D(L+V)$ and satisfy
\[
(L+V)u_i=0,\qquad i=1,2.
\]
The resolvent of $L+V$ is positivity improving and its square
is compact by Theorem~\ref{th.pert1}(i). Hence $u_2=c\,u_1$ for some $c>0$.
Since
\[
h(x,u_1(x))=h(x,c\,u_1(x))
\qquad\text{for }m\text{-a.e. }x\in B,
\]
the strict monotonicity of $h(x,\cdot)$ on $B$, together with
$m(B)>0$ and $u_1>0$ $m$-a.e., yields $c=1$.
Thus $u_1=u_2$ $m$-a.e.
\end{proof}

\end{document}